\documentclass[a4paper, reqno, 11pt]{amsart}

\usepackage[english]{babel}
\usepackage{amsmath}
\usepackage{mathrsfs}
\usepackage{amssymb}
\usepackage{amsthm}
\usepackage{enumerate}
\usepackage{ifthen}
\usepackage{bbm}
\usepackage{color}
\usepackage{xcolor}
\usepackage{graphicx}
\provideboolean{shownotes} 
\setboolean{shownotes}{true}
\usepackage{hyperref}
\usepackage{soul}
\usepackage{geometry}
\newcommand{\margnote}[1]{
\ifthenelse{\boolean{shownotes}}%
{\marginpar{\raggedright\tiny\texttt{#1}}}%
{}%
}

\newcommand{\hole}[1]{
\ifthenelse{\boolean{shownotes}}%
{\begin{center} \fbox{ \rule {.25cm}{0cm}
\rule[-.1cm]{0cm}{.4cm} \parbox{.85\textwidth}{\begin{center}
\texttt{#1}\end{center}} \rule {.25cm}{0cm}}\end{center}}
{}
}
\newtheorem{theorem}{Theorem}[section]   %Theorem 1.1, Lemma 1.2  

\newtheorem{lemma}[theorem]{Lemma}
\newtheorem{proposition}[theorem]{Proposition}

\theoremstyle{definition}
\newtheorem{definition}[theorem]{Definition}
\newtheorem{remark}[theorem]{Remark}

\allowdisplaybreaks

\numberwithin{equation}{section}

\subjclass{Primary: 35Q35; Secondary: 60H15, 76M35.}
\keywords{Navier-Stokes-Korteweg equations, Stochastic compressible fluids.}

\begin{document}

\title[Navier-Stokes-Korteweg equations]{Global unique solvability of the 1D stochastic Navier-Stokes-Korteweg equations}

\author[L. Pescatore]{L. Pescatore}
\address[L.Pescatore]{DISIM - Dipartimento di Ingegneria e Scienze dell'Informazione e Matematica\\ Universit\`a  degli Studi dell'Aquila \\Via Vetoio \\ 67100 L'Aquila \\ Italy}
\email[]{\href{L.Pescatore@}{lorenzo.pescatore@univaq.it}}
\begin{abstract}
We prove the global well-posedness of the one-dimensional Navier-Stokes-Korteweg equations driven by a stochastic multiplicative noise. The analysis is performed for the general case of capillarity and viscosity coefficients $k(\rho)= \rho^\beta, \,  \beta \in \mathbb{R},\, \mu(\rho)=\rho^\alpha, \, \alpha \ge 0,$ which are not coupled through a BD relation. Global existence and uniqueness of solutions is obtained in the regularity class of strong pathwise solutions, which are strong solutions in PDEs and also in the sense of probability.  We first make use of a multi-layer approximation scheme and a stochastic compactness argument to establish the local well-posedness result for any $\alpha$ and $\beta.$ Then, we apply a BD entropy method which provides control of the vacuum states of the density and allows to perform an extension argument. Global well-posedness is thus obtained in the range where there is no vacuum and the strong coercivity condition $2\alpha-4 \le \beta \le 2 \alpha-1,$ introduced in \cite{Germain LeFloch}, holds.
As a byproduct,  we also cover the deterministic setting $\mathbb{F}=0,$ which to the best of our knowledge is likewise an open problem in the fluid dynamics literature.
\end{abstract}
\maketitle
\section{Introduction}
\noindent
We consider the one-dimensional Navier-Stokes-Korteweg equations driven by a stochastic force.  For $x \in \mathbb{T},$ the one-dimensional flat torus, the dynamics is governed by the following set of  equations
\begin{equation}\label{main system}
\begin{cases}
\partial_t \rho+ \partial_x(\rho u)=0, \\ 
\text{d} (\rho u)+ [\partial_x (\rho u^2) + \partial_x p(\rho)]\text{d}t= [\partial_x \mathcal{S} + \partial_x \mathcal{K}]\text{d}t+ \rho \mathbb{F}(\rho,u) \text{d}W,
\end{cases}
\end{equation}
describing a compressible viscous fluid in which capillarity effects are not negligible and are therefore encoded in the Korteweg tensor $\mathcal{K}.$
System \eqref{main system} is supplemented with initial conditions
\begin{equation}\label{initial conditions}
(\rho(x,t), u(x,t))_{| t=0}=(\rho_0(x), u_0(x)),
\end{equation}
which are random variables ranging in a suitable functional space, having normalized mass.  The unknowns $\rho(x,t)$ and $u(x,t)$ denote the density and the velocity of the fluid respectively,  while the isentropic regime is characterized by the constitutive law for the pressure
\begin{equation*}
p(\rho)=  \rho^\gamma, \quad \gamma \ge 1.
\end{equation*}
The viscosity and capillarity tensors are given by
\begin{equation*}
\mathcal{S}(\partial_x u)= \mu(\rho) \partial_x u, \quad \mathcal{K}(\rho, \partial_x \rho)= \rho \partial_x (k(\rho) \partial_x \rho) -\dfrac{1}{2} \big( k(\rho)+ \rho k'(\rho) \big) | \partial_x \rho |^2,
\end{equation*}
for general viscosity and capillarity coefficients 
\begin{equation}\label{power laws cap visc}
\mu(\rho)=  \rho^\alpha, \quad \alpha \ge 0, \quad \; k(\rho)=  \rho^\beta, \quad \beta \in \mathbb{R},
\end{equation}
which do not satisfy any Bresch-Desjardins relation, see for instance the typical assumptions required in \cite{Burtea1},\cite{Burtea2},\cite{Charve} for one-dimensional fluids and \cite{Bresch0} for the multi-dimensional case.
\\
\\
The contribution of the capillarity term in the momentum equation can be written as 
\begin{equation}\label{partial x K}
\partial_x \mathcal{K}(\rho, \partial_x \rho)= \rho \partial_x \bigg( \partial_x(k(\rho) \partial_x \rho)- \dfrac{k'(\rho)}{2} | \partial_x \rho |^2 \bigg)
\end{equation}
and depending on the values of the capillarity exponent $\beta$ several models are considered. For example,  we recall the case $\beta=-1$ which in the multi-dimensional case reads 
\begin{equation*}
\operatorname{div}\mathcal{K}(\rho, \nabla \rho)=2 \rho \nabla \bigg( \dfrac{\Delta \sqrt{\rho}}{\sqrt{\rho}} \bigg),
\end{equation*}
arising in the Quantum hydrodynamics literature \cite{Landau} and the choice $\beta=0,$ corresponding to 
\begin{equation*}
\operatorname{div} \mathcal{K}(\rho, \nabla \rho)= \rho \nabla \Delta \rho,
\end{equation*}
for which \eqref{main system} is also known as the Navier-Stokes-Korteweg system.  A wide literature has been developed in the recent years concerning Korteweg fluids. In particular, the tensor $\mathcal{K}$ has been introduced in \cite{Korteweg} and its structure was  exploited by Dunn and Serrin \cite{Dunn and Serrin}.  Note that the expression \eqref{partial x K} appears also in the description of diffuse interface and phase transition models \cite{Denzler} and a kinetic derivation is discussed in \cite{Gorban}. A detailed overview on the Korteweg tensor $\mathcal{K}$ can be found in \cite{Pescatore}.
Concerning the stochastic force $\rho \mathbb{F}(\rho,u) \text{d}W,$ it is a stochastic multiplicative noise defined in terms of a cylindrical Wiener process $W,$ we refer the reader to Section \ref{Sec2} for the precise definition. The noise is presented in the following form by analogy with the deterministic case where the force usually vanishes on the vacuum set $\{ \rho=0 \}.$ Our analysis can be easily augmented with the addition of a deterministic force of the form $\rho f$ having similar assumptions.
\\
\\
We associate with system \eqref{main system} the following energy functional 
\begin{equation}\label{energy functional}
\mathcal{H}(t)= \int_{\mathbb{T}} \bigg(\frac{1}{2} \rho u^2 +F(\rho)+ k(\rho) \frac{ | \partial_x \rho |^2}{2} \bigg) dx,
\end{equation}
for
\begin{equation*}
F(\rho)= \begin{cases} \dfrac{\rho^\gamma}{\gamma-1}, \quad \, \gamma > 1, \\ \rho \log \rho, \quad \gamma=1,
\end{cases}
\end{equation*}
and we also introduce the BD entropy functional 
\begin{equation}
\mathcal{E}(t)= \int_{\mathbb{T}} \bigg( \frac{1}{2} \rho V^2 +F(\rho)+ k(\rho) \dfrac{ | \partial_x \rho |^2}{2} \bigg)dx,
\end{equation}
which is written in terms of the effective velocity \cite{BD}, \cite{Bresch2}, \cite{Const}, \cite{Const2}, \cite{Mellet}
\begin{equation*}
V=u+ \mu(\rho) \dfrac{\partial_x \rho}{\rho^2}.
\end{equation*}
In the deterministic and unforced system,  the energy functional \eqref{energy functional} formally satisfies the standard equality
\begin{equation}\label{formal energy ineq}
\dfrac{d}{dt}\mathcal{H}(\rho,u) + \int_{\mathbb{T}} \mu(\rho) | \partial_x u |^2 dx=0,
\end{equation}
while the derivation of an entropy balance for $\mathcal{E}(t)$ is non-trivial and some restrictions on the parameters $\alpha$ and $\beta$ occur. To this purpose, Germain and LeFloch \cite{Germain LeFloch} introduced the so-called \textit{strong coercivity condition}. For general capillarity and viscosity coefficients satisfying the power laws \eqref{power laws cap visc}, this condition reduces to 
\begin{equation}\label{SCC} \tag{SCC}
2 \alpha -4 \le \beta \le 2\alpha-1.
\end{equation}
To be precise, \eqref{SCC} characterizes the existence of a non-negative entropy dissipation functional $D[\rho]$ (see Proposition \ref{BD prop} for the precise definition), which again for $\mathbb{F}=0$ formally satisfies
\begin{equation}\label{formal BD ineq}
\dfrac{d}{dt}\mathcal{E}(\rho,u) + \mathcal{D}[\rho]=0.
\end{equation}
Several quantities are controlled by \eqref{formal energy ineq} and  \eqref{formal BD ineq}. In particular, for
\begin{equation}\label{NV} \tag{NV}
0 \le \alpha \le \frac{1}{2}, \quad \text{or} \quad \beta \le -2,
\end{equation}
the $H^1(\mathbb{T})$ norm of $\frac{1}{\rho}$ is controlled in terms of the initial conditions and hence no vacuum states arise in finite time, provided that $\rho_0$ is bounded away from zero.  
\\
The main results of the present paper are a local and a global well-posedness result for strong pathwise solutions, namely strong solutions in both PDEs and Probability sense.  
Specifically, this class of solutions satisfies pointwise in $x$ the time integrated version of \eqref{main system} and the underlying stochastic setting $(\Omega, \mathcal{F}, \mathbb{P})$ is fixed a priori. We remark that the local result holds for any values $\alpha \ge 0$ and $\beta \in \mathbb{R},$ while in order to infer the global regularity theorem we restrict our analysis to the range where \eqref{SCC} and \eqref{NV} hold. We also highlight that uniqueness of global solutions for stochastically forced systems,  gives rise to a wide class of interesting problems concerning the long-time behaviour of solutions. In particular,  the Markovian framework can be defined in a standard manner and existence and uniqueness of invariant measures can be investigated, although the system is out of equilibrium.
Nevertheless, we mention that our results include the cases of different stochastic forces as additive noises and the deterministic case $\mathbb{F}=0$ is covered.  Note also that, by virtue of this global regularity result, the latter model can be used as a suitable approximation of other different fluid dynamical models arising in the literature. This will be addressed in future research.
Lastly, we highlight that our results account for random initial data, which enhances the relevance of the analysis for applications.
\\
\\
Despite the local well-posedness of fluid dynamical models is nowadays a standard result in the literature, several issues arise because of the capillarity tensor $\mathcal{K}$ and due to the stochastic setting. First we adapt deterministic techniques by truncating the nonlinearities of system \eqref{main system} and considering also an appropriate Galerkin approximation. Then, we solve each layer of the approximating scheme and we perform a stochastic compactness argument to deduce convergence results.  We highlight that the nonlinear structure of $\mathcal{K}$ makes it essential to account for density contributions in the truncation argument, in contrast with the situation for the compressible Navier–Stokes equations \cite{Breit Feir Hof 3}, \cite{Feir}. Furthermore,  it also poses non-trivial problems in the proof of the tightness and in the stochastic compactness argument which is based on the appropriate use of Skorokhod's representation theorem. Note however that the following procedure does not keep track of the original probability space. To this end, we prove a pathwise uniqueness result and we use the Gyongy-Krylov method to return to the original probability space and hence recovering strong solutions in probability.
Therefore, local strong pathwise solutions exist and are unique up to a maximal stopping time $\tau$ depending on both $\| \rho \|_{W_x^{2, \infty}}$ and $\| u \|_{W_x^{2, \infty}}.$ 
\\
\\
The extension to global-in-time solutions strongly relies on the a priori estimates and on the BD entropy method used in Section \ref{Sec2}. 
On the one hand, the derivation of such a priori estimates in the case where the capillarity and viscosity coefficients are not coupled through a BD relation is non-trivial. On the other hand, as long as the strong coercivity condition \eqref{SCC} holds, the momenta of the BD entropy $\mathcal{E}(t)$ are controlled in terms of the initial conditions. This, together with the no vacuum condition given by Proposition \ref{prop:vacuum} and \eqref{NV}, allows us to derive an $H^2(\mathbb{T}) \times H^1(\mathbb{T})$ estimate by means of a stochastic Gronwall Lemma. We iterate this procedure to deduce an $H^{s+1}(\mathbb{T}) \times H^s(\mathbb{T})$ estimate for $s > \frac{1}{2}+2$ which controls the maximal stopping time $\tau.$ Finally, the continuity argument is then based on the application of the Markov inequality together with bounds on high order Sobolev norms. 
\\
\\
The presence of the multiplicative noise in the momentum equation complicates the derivation of pathwise estimates. We indeed consider the general form $\mathbb{F}(\rho, u)dW$ and thus there is no effective change of variables which would allow to reduce \eqref{main system} to a suitable PDE with random coefficients. Nevertheless, we deduce estimates on the expectation and momenta of suitable norms, therefore additional  difficulties arise in the extension argument and in the derivation of suitable a priori estimates.
As already pointed out by the structure of the energy functional \eqref{energy functional}, the presence of the Korteweg tensor determines a mismatch in the regularity class of the density and the velocity.  It also poses non-trivial issues concerning loss of derivatives. To be precise, the momentum equation involves three derivatives on the density and the continuity equation yields loss of  derivatives with respect to the velocity $u.$ 
This is highlighted in the derivation of the a priori estimates and also in the proof of Theorem \ref{Main Theorem local}.  To overcome this problem, we introduce the new variable $A(\rho)= \sqrt{\frac{k(\rho)}{\rho}} \partial_x \rho,$ which determines a skew-adjoint structure of \eqref{main system} providing several fundamental cancellations between high-order derivative terms.
By virtue of these considerations, the techniques developed in the theory of compressible fluids cannot be directly applied when the dynamics is influenced by capillarity terms.
\\
\\
We conclude this part with a detailed presentation of the state of the art for Korteweg fluids and related models in the deterministic and also stochastic settings.
The analysis of stochastic compressible flows is very recent.  Most of the literature concerns the compressible Navier-Stokes equations in the case of constant viscosity. In particular,  we mention the monograph by Breit, Feireisl and Hofmanov\'{a} \cite{Feir} and references therein \cite{Breit Hofmanova}, \cite{Breit Feir Hof 1},  \cite{Breit Feir Hof 2}, \cite{Breit Feir Hof 3}, \cite{Breit Feir Hof 4}, \cite{Breit Feir Hof 5}.  See also \cite{Coti} for existence of invariant measures in the one-dimensional case and \cite{Zatorska} for  stability results of weak martingale solutions with linear viscosity. A recent analysis is also provided in the case of transport noise \cite{Breit Feir Hof Zat}, \cite{Breit Feir Hof Much}.
On the other hand, only few results concerning stochastic Korteweg fluids are available. Specifically, well-posedness problems have been addressed in \cite{D.P.S.} and \cite{D.P.S.2}, but only the one-dimensional case with $\beta=-1$ is considered.
\\
\\
In the deterministic setting, a wide literature has been developed in the recent years for viscous capillary fluids. Specifically, the multi-dimensional case mainly focuses on the Quantum-Navier-Stokes equations with linear viscosity, where several results concerning the analysis of weak solutions are available \cite{Spirito2}, \cite{Lacroix},  \cite{Jung qns}, \cite{Vasseur Yu 1}, \cite{Ant6}.
The case of constant capillarity, $\beta=0,$ has been addressed in \cite{Hatt} and \cite{Spirito} where local well-posedness for strong solutions and global existence of weak solutions are obtained. See also \cite{Fanelli1} and \cite{Fanelli2} where the case of rotating fluids with capillarity is considered.
\\
In the one dimensional case, the analysis of well-posedness problems in the case of general capillarity exponent $\beta \in \mathbb{R}$ has been addressed in \cite{Chen} for the case of large initial data, while several results are available provided that $\alpha$ and $\beta$ satisfy some BD relations which usually reduce to $\beta=2\alpha-3,$ see \cite{Burtea1}, \cite{Burtea2}, \cite{Charve}.
On the other hand Germain and LeFloch in \cite{Germain LeFloch} introduced several ranges in which global solutions are expected.  In particular, when the vacuum is not controlled, they proved global existence of finite energy weak solutions in the range $\alpha > \frac{2}{3}$ and $2\alpha-3 \le \beta \le -1.$ Recently the above upper bound on $\beta$ has been improved by Bresch and collaborators in \cite{Ant}. To be precise they cover the case $\alpha > \frac{1}{2}, \, \beta >-2$ and $2\alpha-3 \le \beta \le 2\alpha-1$ corresponding to the \textit{tame capillarity condition} introduced in \cite{Germain LeFloch} for which the capillarity coefficient is dominated by the square of the viscosity and therefore viscous effects are dominant in the dynamics.
Finally, we also highlight that the already mentioned results \cite{D.P.S.}, \cite{D.P.S.2} for the stochastic Quantum-Navier-Stokes equations provide new insights also in the deterministic case.  Specifically, existence of global weak solutions and global well-posedness for strong solutions are proved for $\frac{1}{2} < \alpha \le 1$ and $0 \le \alpha \le \frac{1}{2}$ respectively.
\\
\\
Among the inviscid models for capillary fluids we highlight the Quantum-Hydrodynamics system which has been widely studied in the series of papers \cite{Ant1}, \cite{Ant2}, \cite{Ant3}, \cite{Ant4} in the framework of global finite energy weak solutions.
Several problems remain open for the Euler-Korteweg system. In particular, no results of global existence of weak solutions are available, even in the one-dimensional case, while the local existence of smooth solutions has been obtained in \cite{Benz}, \cite{Benz2}. We refer to \cite{Aud} for small data well-posedness.  
For completeness, we also mention the non-uniqueness result based on convex integration techniques obtained in \cite{Feir. Don.} in case $\beta=0,$ and the relative entropy approaches provided in \cite{Bresch0} and \cite{Giess}.
To conclude, singular limits have been investigated for viscous and also inviscid models, here we collect a non-exhaustive list \cite{Ant Cac}, \cite{Ant5}, \cite{Donatelli}, \cite{Ant0}, \cite{Cag-Don-Hien}.
\\
\\
\textbf{Outline of the paper.} The remaining part of the paper is organized as follows.  In Section \ref{Sec2} we give the definitions of strong pathwise solutions and we state the main results we aim to prove. Section \ref{Sec3} is devoted to the derivation of the a priori estimates. In particular we derive the BD entropy inequality and we also provide suitable upper and lower bounds for the density, together with some higher order derivative estimates. In Section \ref{Sec4} we prove the local well-posedness result Theorem \ref{Main Theorem local}. Finally, Section \ref{Sec5} contains the proof of Theorem \ref{Main Theorem global},  which is the main result of the paper.
\\
\\
\textbf{Acknowledgements.} The author gratefully acknowledges the partial support by the Gruppo Na\-zio\-na\-le per l’Analisi Matematica, la Probabilit\`a e le loro
Applicazioni (GNAMPA) of the Istituto Nazionale di Alta Matematica
(INdAM), and by the PRIN 2020 ``Nonlinear evolution PDEs, fluid
dynamics and transport equations: theoretical foundations and
applications'' and by the PRIN2022
``Classical equations of compressible fluids mechanics: existence and
properties of non-classical solutions''. 
\section{Definitions and main results}\label{Sec2}
\noindent
We start our discussion with a brief introduction to the stochastic element of our analysis. Then we focus on the description of strong pathwise solutions and to the presentation of the main results achieved in this paper.
\subsection{Assumptions on the stochastic elements}
We denote by $(\Omega, \mathcal{F},(\mathcal{F}_t)_{t \ge 0},\mathbb{P})$  a stochastic basis with right-continuous filtration. Then, we define the cylindrical $(\mathcal{F}_t)$ Wiener process as 
\begin{equation} \label{Cyl W}
W(t)=\sum_{k=1}^\infty e_kW_k(t), \quad t\in [0,T]
\end{equation}
in terms of a sequence of mutually independent real-valued Brownian motions $ (W_k)_{k\in \mathbb{N}}$ and an orthonormal basis $(e_k)_{k\in \mathbb{N}}$ of a separable Hilbert space $\mathfrak{U}.$
The stochastic integral is then understood in the  It$\hat{\text{o}}$ sense,   see \cite{Da Prato} and also \cite{Feir} for applications to compressible fluid dynamical equations. The driving process $\mathbb{F}(\rho,u): \mathfrak{U} \rightarrow L^2(\mathbb{T})$ is similarly defined by superposition as 
\begin{equation*}
\mathbb{F}(\rho,u)e_k=F_k(\cdot,\rho(\cdot),u(\cdot)),
\end{equation*}
with assigned regularity on the coefficients $$F_k: \mathbb{T} \times [0, \infty) \times \mathbb{R} \rightarrow \mathbb{R}.$$ In particular, we assume the following growth assumptions uniformly in $x \in \mathbb{T}$ 
\begin{equation}\label{F1}
F_k(\cdot,0,0)=0,
\end{equation}
\begin{equation}\label{F w}
| F_k(x,\rho,q) | \le f_k (1+|u|),
\end{equation}
\begin{equation}\label{F2}
| \partial^l_{x,\rho,q} F_k(x, \rho,q) | \le f_k,  \quad \sum_{k=1}^\infty f_k < \infty \quad  \text{for all} \; l\in {1,...,s}, \quad \{ f_k \}_{k\in \mathbb{N}} \subset \mathbb{R}.
\end{equation}
Namely, $F_k$ are $C^s$-functions, for $s \in \mathbb{N}$ complying with the space regularity of solutions.
The stochastic integral $$ \int_{0}^{t} \rho \mathbb{F}(\rho, u) \text{d}W= \sum_{k=1}^{\infty} \int_{0}^{t} \rho F_k( \cdot, \rho, u) \text{d}W_k$$
is then understood in the It\^{o} sense. We also highlight that the sum in \eqref{Cyl W} is not a priori expected to be converging in $\mathfrak{U}$. This issue is typically addressed by introducing the space $ \mathfrak{U}_0 \supset \mathfrak{U}$ defined by
\begin{equation*}
\mathfrak{U}_0= \bigg\{ v= \sum_{k \ge 1}^{} c_k e_k; \quad \sum_{k \ge 1}^{} \dfrac{c^2_k}{k^2} < \infty \bigg\},
\end{equation*}
which is endowed with the following norm
\begin{equation*}
\| v \|^2_{\mathfrak{U}_0}= \sum_{k \ge 1}^{} \dfrac{c^2_k}{k^2}, \quad v= \sum_{k \ge 1}^{} c_k e_k.
\end{equation*}
Nevertheless, the process $W$ has $\mathbb{P}$-a.s. $C([0,T];\mathfrak{U}_0)$ sample paths and the superposition operator belongs to the Hilbert-Schmidt class, see Section 2.3 in \cite{Feir}.
\\
\\
We also account randomness in the initial data. In particular, $(\rho_0,u_0)$ are $\mathcal{F}_0$-measurable random variables satisfying 
\begin{equation} \label{C.I STRONG}
 C^{-1} \le \rho_0 \le C,  \quad \rho_0 \in H^{s+1}(\mathbb{T}),  \quad u_0 \in H^s(\mathbb{T}) \quad \mathbb{P}\text{-a.s.}%, \quad \partial_x \bigg( \dfrac{1}{\sqrt{\rho_0}}\bigg) \in L^2(\mathbb{T})% 
\end{equation}
for a deterministic constant $C > 0$ and $s>7/2.$ 
Clearly, to deduce the estimates on the expectation and momenta of the energy-entropy functional, together with the high-order Sobolev norms considered in Section 3, it is necessary to assume that the same regularity holds at the initial time. Thus, for the global regularity of solutions, the following additional conditions are required
\begin{equation} \label{C.I. Momenta strong}
\rho_0 \in L^p(\Omega; H^{s+1}(\mathbb{T})),  \quad u_0 \in L^p(\Omega; H^s(\mathbb{T})), \quad \text{for all} \;p  \in [1, \infty).
\end{equation}
\subsection{Definitions and main theorems}
In the present Section, we fix $(\Omega, \mathcal{F},(\mathcal{F}_t)_{t \ge 0},\mathbb{P})$ and $W$ to be a stochastic basis with right continuous filtration and an $(\mathcal{F}_t)$-cylindrical Wiener process respectively.  The stochastic setting is then assigned and we introduce the definition of strong pathwise solution.
\begin{definition} \label{Def1}
Let $(\rho_0,u_0)$ be an $\mathcal{F}_0-$measurable random variable with regularity \eqref{C.I STRONG} for some $s > \frac{5}{2}.$
We say that a triplet $(\rho,u,\tau)$ is a \textit{local strong pathwise solution} to system \eqref{main system}-\eqref{initial conditions}, provided
\begin{itemize}
\item[(1)]
 $\tau$ is a $\mathbb{P}-a.s.$ strictly positive $(\mathcal{F}_t)$-stopping time;
 \item[(2)]
 the density $\rho$ is a $H^{s+1}(\mathbb{T})$-valued $(\mathcal{F}_t)$-progressively measurable stochastic process satisfying
 $$ \rho(\cdot \land \tau) >0, \quad \rho(\cdot \land \tau) \in C([0,T];H^{s+1}(\mathbb{T})) \quad \mathbb{P}-a.s.; $$
 \item[(3)]
 the velocity $u$ is a $H^s(\mathbb{T})$-valued $(\mathcal{F}_t)$-progressively measurable stochastic process such that 
$$ u(\cdot \land \tau) \in C([0,T];H^s(\mathbb{T})) \cap L^2([0,T];H^{s+1}(\mathbb{T})) \quad \mathbb{P}-a.s.; $$
\item[(4)]
the continuity equation $$ \rho(t \land \tau)= \rho_0- \int_{0}^{t\land \tau} \partial_x(\rho u) ds $$
holds for all $t \in [0,T] \ \mathbb{P}-a.s.;$
\item[(5)] the momentum equation 
\begin{equation*}
\begin{split}
(\rho u)(t \land \tau)&= \rho_0 u_0 -\int_{0}^{t \land \tau} \partial_x(\rho u^2+p(\rho))ds +\int_{0}^{t \land \tau} \partial_x(\mu(\rho) \partial_x u)ds \\ &+\int_{0}^{t \land \tau} \partial_x \mathcal{K}ds + \int_{0}^{t \land \tau} \rho \mathbb{F}(\rho, u)dW
\end{split}
\end{equation*}
holds for all $t \in [0,T] \ \mathbb{P}-a.s.;$
\end{itemize}
\end{definition}
\noindent
The definitions of maximal and global strong pathwise solutions are then stated as follows
\begin{definition}\label{Def2}
Let $(\rho_0,u_0)$ be an $\mathcal{F}_0-$measurable random variable with regularity \eqref{C.I STRONG}.
A quadruplet $(\rho,u,(\tau_R)_{R \in \mathbb{N}},\tau)$ is called a \textit{maximal strong pathwise solution} to system \eqref{main system}-\eqref{initial conditions}, provided that
\begin{enumerate}
\item[(1)]
 $\tau$ is a $\mathbb{P}-a.s.$ strictly positive $(\mathcal{F}_t)$-stopping time;
 \item[(2)] 
 $\{ \tau_R \}_{R \in \mathbb{N}}$ is an increasing sequence of $(\mathcal{F}_t)$-stopping times such that
\begin{itemize}
\item 
$\tau_R < \tau$ on the set $[\tau < \infty],$
\item 
$\lim_{R \rightarrow \infty} \tau_R=\tau $ a.s. 
\item The following condition holds
\begin{equation}\label{limit cond}
\sup_{t \in [0,\tau_R]} \big( \| \rho (t) \|_{W_x^{2,\infty}}+  \| u(t) \|_{W_x^{2,\infty}} \big) \ge R \ \text{in} \ [\tau < \infty];
\end{equation}
\end{itemize}
\item[(3)]
For any $R \in \mathbb{N},$ each triplet $(\rho,u,\tau_R)$ is a local strong pathwise solution in the sense of Definition \ref{Def1}.
\end{enumerate}
In the case of $(\rho,u,(\tau_R)_{R \in \mathbb{N}},\tau)$ being a maximal strong pathwise solution with $\tau=\infty, \;  a.s. ,$ then we say that such a solution is global. 
\end{definition}
\noindent
Formally,  as $R \rightarrow \infty$ \eqref{limit cond} reduces to 
\begin{equation*}
\phi(\tau)=\sup_{t \in [0,\tau]} \big( \| \rho (t) \|_{W_x^{2,\infty}}+  \| u(t) \|_{W_x^{2,\infty}} \big) = \infty \quad  \text{in} \ [\tau < \infty].
\end{equation*}
On the other hand, in the case of global maximal strong pathwise solutions,  the quantity $\phi(\infty)$ may be finite or infinite.  This dichotomy is a quite standard scenario in the case of stochastically forced PDE, see for instance \cite{Deb} and \cite{Glatt-Holtz} for a similar discussion for global solutions of incompressible fluid equations and to \cite{D.P.S.} for closely related models of compressible fluids.
\\
\\
The main results of this paper are the following two theorems.  Theorem \ref{Main Theorem local} establishes existence and uniqueness of a local in time maximal strong pathwise solution for any $\alpha$ and $\beta.$ Theorem \ref{Main Theorem global} relies on a continuity argument which allows to extend the unique local solution to a global one. Global well-posedness is therefore provided in the range given by the strong coercivity condition \eqref{SCC} introduced in \cite{Germain LeFloch} and no vacuum \eqref{NV}.
\begin{theorem}\label{Main Theorem local}
Let $s \in \mathbb{N}$ satisfy $ s > \frac{7}{2}.$ Assume the coefficients $F_k$ satisfy \eqref{F1}-\eqref{F2}. Then for any $\alpha \ge 0, \; \beta \in \mathbb{R}, $ there exists a unique maximal local strong pathwise solution $(\rho,u,(\tau_R)_{R \in \mathbb{N}},\tau)$ to system \eqref{main system}-\eqref{initial conditions} in the sense of Definition \ref{Def2}, provided that the initial conditions satisfy \eqref{C.I STRONG}.
\end{theorem}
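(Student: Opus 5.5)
The proof follows the scheme announced in the introduction: a truncated approximate problem, Galerkin discretization, stochastic compactness to get martingale solutions, pathwise uniqueness, the Gyöngy--Krylov criterion to return to the original probability space, and finally removal of the truncation by stopping times.

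\textbf{Reformulation and truncation.} First rewrite \eqref{main system} for $\rho>0$ in non-conservative form in the variables $(\rho,u,A)$, with $A(\rho)=\sqrt{k(\rho)/\rho}\,\partial_x\rho$. Formally, the capillarity term then becomes $\sqrt{k(\rho)\rho}\,\partial_x(\rho^{-1}\partial_x(\sqrt{\rho k(\rho)}\,A))$ plus lower order terms. The pair $(u,A)$ satisfies a system whose top-order part is skew-adjoint, so the third-derivative terms in $\rho$ in the $u$-equation cancel, in $H^s$ energy estimates, against the loss of derivative in the $A$-equation coming from $\partial_x(\rho u)$.

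Fix $R$ and a smooth cut-off $\theta_R:[0,\infty)\to[0,1]$ equal to $1$ on $[0,R]$ and to $0$ on $[2R,\infty)$. Multiply every nonlinear term, including the noise, by
\[
\theta_R\big(\|\rho\|_{W^{2,\infty}_x}+\|u\|_{W^{2,\infty}_x}\big).
\]
Replace the singular coefficients $\mu(\rho),k(\rho),1/\rho$ by smooth functions that agree with them on $[1/(2R),2R]$ and are bounded above and below. This is the essential density truncation, since $\beta\in\mathbb{R}$ is arbitrary and $\mathcal{K}$ depends nonlinearly on $\rho$.

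\textbf{Galerkin scheme and uniform estimates.} Project the truncated $(u,A)$ (or $(\rho,u)$) equations onto the span of the first $N$ Fourier modes. The result is a finite-dimensional SDE with locally Lipschitz, linearly growing coefficients, so it has a unique global solution. Apply Itô's formula to $\|\partial_x^s u_N\|_{L^2}^2+\|\partial_x^s A_N\|_{L^2}^2$ and $\|\rho_N\|^2_{H^{s}}$, using
\begin{itemize}
\item the skew-adjoint cancellation for the top-order terms;
\item Kato--Ponce commutator estimates, where the cut-off makes every commutator bounded by $C_R(\|u_N\|_{H^s}^2+\|A_N\|_{H^s}^2)$;
\item viscous dissipation $\mu\ge c_R>0$ for $\|u_N\|_{L^2_tH^{s+1}}$;
\item BDG for the stochastic integral, via \eqref{F2} and the Moser estimate $\|\rho F_k(\rho,u)\|_{H^s}\le f_kC_R(1+\|(\rho,u)\|_{H^s})$.
\end{itemize}
This yields bounds uniform in $N$ on $\mathbb{E}\sup_t\|(\rho_N,u_N)\|^p_{H^{s+1}\times H^s}$ and $\mathbb{E}\|u_N\|^p_{L^2H^{s+1}}$, together with fractional time-regularity bounds in lower norms obtained from the equations. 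I expect this step to be the main obstacle: closing the top-order estimate without derivative loss requires the $A$ variable to commute well with the Galerkin projector. If it does not, I would instead add an artificial viscosity $\varepsilon\partial_{xx}\rho$ in the continuity equation as an extra layer, to be removed later with estimates uniform in $\varepsilon$.

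\textbf{Compactness, martingale solutions and uniqueness.} The time-regularity bounds give tightness of the laws of $(\rho_N,u_N,W)$ in $C([0,T];H^{s+1-\delta})\times (L^2H^{s+1-\delta}\cap C([0,T];H^{s-\delta}))\times C([0,T];\mathfrak{U}_0)$. Skorokhod's theorem, in the Jakubowski version if needed, gives a.s. convergent copies on a new probability space. Passing to the limit in the nonlinear terms, which are continuous in these topologies since $s-\delta>5/2$ embeds into $W^{2,\infty}$, and identifying the stochastic integral via the standard martingale argument produce a martingale solution of the truncated system.

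Next prove pathwise uniqueness for the truncated system: take the difference of two solutions, estimate it in the lower norm $H^{1}\times L^2$ (in the $(\rho,A,u)$ form) up to a stopping time where both are bounded in $H^{s+1}\times H^s$, and apply Gronwall with BDG. The Gyöngy--Krylov lemma then upgrades the sequence to convergence in probability on the original space, giving a strong pathwise solution of the truncated problem for each $R$.

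\textbf{Removing the truncation.} Define
\[
\tau_R=\inf\Big\{t:\|\rho\|_{W^{2,\infty}}+\|u\|_{W^{2,\infty}}\ge R \ \text{or}\ \min\rho\le 1/R\Big\}.
\]
With $R$ large relative to $C$ in \eqref{C.I STRONG}, $\tau_R>0$ a.s. by continuity. On $[0,\tau_R]$ the truncated and original systems coincide. By uniqueness the solutions for different $R$ agree up to the smaller stopping time, so they glue together. Setting $\tau=\lim_R\tau_R$ gives the maximal solution, and \eqref{limit cond} holds by construction. For random data that are merely a.s. in \eqref{C.I STRONG}, localize on the $\mathcal{F}_0$-measurable sets $\{\|\rho_0\|_{H^{s+1}}+\|u_0\|_{H^s}\le n\}$ and glue the resulting solutions.
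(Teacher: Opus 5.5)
Your overall architecture (cut-off in the $W^{2,\infty}$ norm, Galerkin scheme, tightness and Skorokhod, pathwise uniqueness, Gy\"ongy--Krylov, stopping times and a maximal time) matches the paper's. However, the pathwise uniqueness step as you describe it fails. This is exactly the step where the hypothesis $s>\frac{7}{2}$ must enter, and your proposal never uses it. In the truncated system every nonlinearity and the noise carry the factor $\theta_R(y)$ with $y=\|\rho\|_{W^{2,\infty}}+\|u\|_{W^{2,\infty}}$. The equation for the difference of two solutions therefore contains terms such as $(\theta_R(y_1)-\theta_R(y_2))\,u_1\partial_x u_1$ and $(\theta_R(y_1)-\theta_R(y_2))\,\partial_x(\mu_k'(\rho_1)\partial_{xx}r_1)$, and the It\^o correction contains $|\theta_R(y_1)-\theta_R(y_2)|^2\sum_k\|F_k(\rho_1,u_1)\|_{L^2}^2$. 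The only available bound is $|\theta_R(y_1)-\theta_R(y_2)|\lesssim\|\rho_1-\rho_2\|_{W^{2,\infty}}+\|u_1-u_2\|_{W^{2,\infty}}$, which requires an $H^{s'}$ norm of the difference with $s'>\frac{5}{2}$. A low-order ($H^1\times L^2$-type) energy $E$ of the difference gives no dissipation for the density component, which is only transported, and only $L^2_tH^1_x$ for the velocity, so this term cannot be absorbed. Interpolating against the $H^{s+1}\times H^s$ bounds of the two solutions only gives $|\theta_R(y_1)-\theta_R(y_2)|\lesssim E^{\vartheta/2}$ with $\vartheta<1$. Hence $dE\le C\big(E+E^{(1+\vartheta)/2}+E^{\vartheta}\big)dt+dM$, and $E(0)=0$ no longer forces $E\equiv 0$. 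Localising at the first time either solution reaches $y=R$, so that both cut-offs equal $1$, gives coincidence only up to that time. Gy\"ongy--Krylov, by contrast, requires the joint limit law to be concentrated on the diagonal of the full path space on $[0,T]$.

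The paper instead estimates the difference of the $(r,u)$-unknowns in $H^{s'+1}\times H^{s'}$ with $s'>\frac{5}{2}$, so that the cut-off difference is controlled by the energy of the difference itself. The price is a loss of one derivative on the solutions. Terms like $(u_1-u_2)\,\partial_x^{l+2}r_2\,\partial_x^{l+1}(r_1-r_2)$ and $(\mu_k'(\rho_1)-\mu_k'(\rho_2))\,\partial_x^{l+3}r_2\,\partial_x^{l}(u_1-u_2)$ produce the Gronwall weight $g(t)=C(R)\big(1+\sum_{i=1,2}(\|r_i\|^2_{H^{s'+2}}+\|u_i\|^2_{H^{s'+2}})\big)$. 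This weight is in $L^1(0,T)$ only if $s+1\ge s'+2$ for some $s'>\frac{5}{2}$, i.e. $s>\frac{7}{2}$. This is the argument you need; designing a cut-off compatible with a low-norm uniqueness estimate is hard here because the density part is hyperbolic.

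Two smaller points:
\begin{enumerate}
\item If you take $r$ from \eqref{new variables} as the Galerkin unknown, then $A=\partial_x r$ commutes with $\Pi_m$ and the top-order cancellation survives the projection. Your artificial-viscosity fallback is then unnecessary.
\item If $\tau_R$ may be triggered by $\min\rho\le 1/R$, then \eqref{limit cond} does not hold by construction, since the supremum of the $W^{2,\infty}$ norm on $[0,\tau_R]$ may stay below $R$. Drop that condition from $\tau_R$, using that the continuity equation gives $\min\rho(t)\ge C^{-1}\exp\big(-\int_0^t\|\partial_x u\|_{L^\infty}\big)$. Then choose your density-truncation level depending on $R$ and $T$ accordingly.
\end{enumerate}
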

\begin{theorem}\label{Main Theorem global} 
Under the same hypothesis of Theorem \ref{Main Theorem local},  assume in addition that \eqref{SCC} and \eqref{NV} hold and that the initial conditions satisfy \eqref{C.I. Momenta strong}. Then, there exists a unique maximal global strong pathwise solution $(\rho,u,(\tau_R)_{R \in \mathbb{N}},\tau)$ to system \eqref{main system}-\eqref{initial conditions} in the sense of Definition \ref{Def2}.
\end{theorem}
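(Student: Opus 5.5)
The global result will follow from Theorem \ref{Main Theorem local} by an extension argument. It suffices to show that the maximal stopping time satisfies $\tau=\infty$ $\mathbb{P}$-a.s., i.e. that $\mathbb{P}(\tau_R\le T)\to 0$ as $R\to\infty$ for every fixed $T>0$. By \eqref{limit cond}, on $[\tau\le T]$ the quantity $\sup_{t\le \tau_R}(\|\rho\|_{W^{2,\infty}_x}+\|u\|_{W^{2,\infty}_x})$ is at least $R$. Since $s>7/2$, Sobolev embedding bounds this quantity by $\sup_{t\le\tau_R}(\|\rho\|_{H^{s+1}}+\|u\|_{H^s})$. The Markov inequality then gives
\begin{equation*}
\mathbb{P}(\tau\le T)\le \mathbb{P}\Big(\sup_{t\le T\wedge\tau_R}\big(\|\rho\|_{H^{s+1}}+\|u\|_{H^s}\big)\ge R\Big)\le \frac{1}{R}\,\mathbb{E}\sup_{t\le T\wedge\tau_R}\big(\|\rho\|_{H^{s+1}}+\|u\|_{H^s}\big).
\end{equation*}
The whole task is therefore to prove a bound on this expectation that depends only on $T$ and the initial data through \eqref{C.I. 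Momenta strong}, and is uniform in $R$. Uniqueness is inherited directly from Theorem \ref{Main Theorem local}.

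The uniform bound is built in layers, each applied to the stopped solution on $[0,T\wedge\tau_R]$, where it is smooth enough to justify It\^o's formula.

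\textbf{Energy.} Apply It\^o's formula to $\mathcal{H}$. The It\^o correction and the martingale term are controlled by \eqref{F w}--\eqref{F2} together with mass conservation. A Burkholder--Davis--Gundy argument then gives $\mathbb{E}\sup_t\mathcal{H}^p+\mathbb{E}(\int\mu(\rho)|\partial_xu|^2)^p\le C$ for all $p$.

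\textbf{BD entropy.} Using \eqref{SCC}, Proposition \ref{BD prop} provides the nonnegative dissipation $\mathcal{D}[\rho]$ and the It\^o version of \eqref{formal BD ineq}. The noise contributes the same kind of terms as in the energy estimate, since $V-u$ is independent of the noise. This yields moment bounds on $\mathcal{E}$ and on $\int_0^T\mathcal{D}$.

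\textbf{Density bounds.} In one space dimension, $\sqrt{k(\rho)}\partial_x\rho\in L^2$ gives an $L^\infty$ upper bound on $\rho$. Proposition \ref{prop:vacuum} under \eqref{NV} gives $\|1/\rho\|_{H^1}$, and hence a lower bound $\rho\ge c(\omega)>0$, in every moment. All constants are then polynomial in these random quantities, and we take expectations of their powers.

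\textbf{$H^2\times H^1$ estimate.} Work in the variables $(u,A(\rho))$ with $A=\sqrt{k(\rho)/\rho}\,\partial_x\rho$. The top-order capillarity terms then cancel through the skew-adjoint structure, and the viscous term gives parabolic smoothing for $u$. We close the estimate for $\|\partial_xA\|_{L^2}^2+\|\partial_xu\|_{L^2}^2$, i.e. $\rho\in H^2$ and $u\in H^1$, with a stochastic Gronwall lemma. The random coefficient in this lemma is of the form $\|\partial_xu\|_{L^\infty}$ plus powers of the lower-order norms, which are integrable in time by the previous steps.

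\textbf{Iteration to $H^{s+1}\times H^s$.} Apply $\partial_x^j$ and repeat the same commutator estimates (Kato--Ponce type) for $j=2,\dots,s$. At each level the stochastic Gronwall lemma yields moments of $\sup_t(\|\rho\|_{H^{j+1}}^2+\|u\|_{H^j}^2)$. The noise terms are estimated using \eqref{F2} up to order $s$.

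I expect the main obstacle to be closing the first higher-order estimate without a BD relation linking $\alpha$ and $\beta$. The viscous smoothing for $u$ and the dispersive structure for $\rho$ must be balanced so that the bad cross terms, involving three derivatives of $\rho$ against one derivative of $u$, cancel through the $A$-variable rather than losing a derivative. The stochastic Gronwall lemma needs a log-integrable random coefficient, and producing one requires the moment bounds from the energy and BD steps to be combined carefully with the lower bound on $\rho$. Once this step is done, the later iterations are routine, and the Markov argument above shows $\mathbb{P}(\tau\le T)=0$ for all $T$, so $\tau=\infty$ almost surely.
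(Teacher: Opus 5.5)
The step you call ``the whole task'' is where the argument breaks. The ingredients you list cannot give a bound on $\mathbb{E}\sup_{t\le T\wedge\tau_R}\big(\|\rho\|_{H^{s+1}}+\|u\|_{H^s}\big)$ that is uniform in $R$. The stochastic Gronwall lemma of \cite{Glatt-Holtz} (Lemma 5.3) requires an almost sure deterministic bound $\int_0^{\tau}a(t)\,dt\le M$ on the random coefficient, and its constant grows with $M$. Versions without such a bound need exponential moments of $\int_0^Ta\,dt$. Your coefficients only have polynomial moments, since they come from Propositions \ref{energy prop}, \ref{BD prop} and \ref{prop:vacuum}. Already for $dX=aX\,dt$ one has $X(T)=X(0)e^{\int_0^Ta\,dt}$, whose expectation need not be finite. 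Integrability of $a$ in $L^p(\Omega;L^1(0,T))$ only shows that localizing times go to infinity, and that localization is the missing idea.

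As in Section \ref{Sec5}, set $\gamma_M=\inf\{t\ge0:\int_0^ta(s)\,ds>M\}$ at each level of the hierarchy and take the minimum. Then prove $\mathbb{E}\sup_{t\le T\wedge\gamma_M\wedge\tau_N}\big(\|A\|_{H^3}^2+\|u\|_{H^3}^2\big)\le C(M,T)$, use $\mathbb{P}(\gamma_M\le T)\le M^{-1}\mathbb{E}\int_0^{T\wedge\tau}a\,dt$, and split
\[
\mathbb{P}(\tau_N\le T)\le\mathbb{P}\big(\{\tau_N\le T\}\cap\{\gamma_M>T\}\big)+\mathbb{P}(\gamma_M\le T)\lesssim\frac{C(M,T)}{N}+\mathbb{P}(\gamma_M\le T).
\]
Let $N\to\infty$ first and $M\to\infty$ afterwards. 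Each level's bound holds only up to the previous localizing time, so the thresholds are best removed one level at a time.

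Second, your $H^2\times H^1$ step does not close with the coefficient you name. The quantity $\|\partial_xu\|_{L^\infty}$ is not time-integrable from the energy and BD steps, which only give $\partial_xu\in L^2_tL^2_x$. Interpolating it against the viscous dissipation makes the inequality superlinear in $\|\partial_xA\|_{L^2}^2$, for example $\|\partial_xA\|_{L^2}^2\|\partial_{xx}u\|_{L^2}\le\delta\|\partial_{xx}u\|_{L^2}^2+C\|\partial_xA\|_{L^2}^4$.

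The paper instead spends one factor of $\|\partial_xA\|_{L^2}$ on the BD dissipation. By the Alazard--Bresch inequality of the appendix, $\|\partial_xA\|_{L^2}\lesssim\|\rho^{-\alpha/2}\|_{L^\infty}\|\partial_{xx}\rho^{\frac{\alpha+\beta+1}{2}}\|_{L^2}$, and the $\partial_{xx}u$ factors are absorbed into $\int\rho^{\alpha-1}|\partial_{xx}u|^2dx$. The Gronwall coefficient then becomes $\|\partial_{xx}\rho^{\frac{\alpha+\beta+1}{2}}\|_{L^2}^2$ times powers of $\rho^{\pm1}$, which is time-integrable precisely by Proposition \ref{BD prop}. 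This is where \eqref{SCC} enters beyond the density bounds. Only from the next level on may $\|\partial_xu\|_{L^\infty}$ appear in the coefficient, once $\partial_{xx}u\in L^2_tL^2_x$ is known.

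A smaller point: $\sqrt{k(\rho)}\,\partial_x\rho\in L^2$ controls $\rho^{\frac{\beta}{2}+1}$, which bounds $\rho$ from above only when $\beta>-2$. In general the upper bound comes from the BD entropy, as in Proposition \ref{prop:vacuum}.
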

\begin{remark}
Some remarks are in order
\begin{itemize}
\item [1)]
By virtue of a standard mollification argument and stochastic compactness, we can prove the global well-posedness result given by Theorem \ref{Main Theorem global} for strong pathwise having space regularity $H^2_x \times H^1_x,$ provided that the same regularity is assumed on the initial data and $\rho_0$ is bounded away from the vacuum. Also the regularity on the stochastic force can be relaxed to $F_k \in C_x^2.$
\\
\item [2)]
The underlying probability space is not endowed with a topological structure. Therefore, it is fundamental to apply a pathwise uniqueness argument to recover strong solutions in probability. This procedure remains valid provided that $s>\frac{7}{2}.$ In contrast, such a condition is not required in a purely deterministic setting, where our argument for the proof of Theorem \ref{Main Theorem local} would apply for a lower differentiability exponent $s> \frac{5}{2}.$
\\
\item [3)]
The BD entropy estimate Proposition \ref{BD prop} and therefore Theorem \ref{Main Theorem global} can be easily extended to the case $x \in \mathbb{R},$ with the asymptotic conditions given in \cite{Germain LeFloch}. The case of domains with boundary is still an open problem.
\\
\item [4)]
Since $\rho$ and $u$ have continuous trajectories in $H^{s+1}$ and $H^{s}  $ then a blow up of the $H^s$ norm of the solution coincides with a blow up in the $W^{2,\infty}$ norm at time $\tau.$ Also, the notion of uniqueness refers only to the solution $(\rho,u)$ and the maximal stopping time $\tau.$ 
\end{itemize}
\end{remark}
\noindent
The proof of Theorem \ref{Main Theorem local} builds on deterministic and stochastic techniques. Specifically we make use of a multi-layer approximation scheme arising in deterministic settings, together with stochastic compactness arguments. We tailored the theory developed in the monograph \cite{Feir} and \cite{Breit Feir Hof 3} for the Navier-Stokes equation to the case of capillary fluids, in which the nonlinearity due to the Korteweg tensor plays a crucial role. 
Thereafter, the BD‑entropy method provides the fundamental tools used in the extension argument. More precisely, Proposition \ref{BD prop} yields an appropriate time‑integrability condition on the second derivative of the density, which together with the no‑vacuum result established in Proposition \ref{prop:vacuum}, allows us to control the blow-up norms through high‑order Sobolev estimates
\section{A priori estimates}\label{Sec3}
\noindent
This Section contains all the a priori estimates needed in our analysis. In particular, the main elements are the BD entropy method considered in Proposition \ref{BD prop} and the higher order a priori estimates Proposition \ref{Prop h2 est} and Proposition \ref{Prop global s+1 s},  which can be obtained when the vacuum is controlled.  In what follows, $T$ denotes an arbitrarily large time and $\tau$ represent a generic stopping which formally encodes the maximal life span of strong solutions. We start by proving the following standard energy estimate
\begin{proposition}\label{energy prop}
Let $(\rho,u)$ be a strong solution of \eqref{main system}-\eqref{initial conditions}, then the following energy estimate holds
\begin{equation}\label{energy visk}
\begin{split}
& \mathbb{E} \bigg[ \sup_{t \in [0,T \land \tau]}\mathcal{H}(t) \bigg]^p +\mathbb{E} \bigg[ \int_{0}^{T\land \tau} \int_{\mathbb{T}} \mu(\rho)| \partial_x u |^2 dxdt \bigg]^p  \lesssim 1+ \mathbb{E}\bigg[ \mathcal{H}(0) \bigg]^p,
\end{split}
\end{equation}
for any $p \in [1, \infty).$
\end{proposition}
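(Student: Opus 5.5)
We apply It\^o's formula to the energy functional $\mathcal{H}(t)$ along the strong solution, stopped at $T\land\tau$. Writing $m=\rho u$, the kinetic part is $\int \frac{m^2}{2\rho}dx$. Since $\rho$ solves a transport equation with no noise, only the momentum carries an It\^o correction. We therefore apply It\^o's formula to the functional $(\rho,m)\mapsto\int \frac{|m|^2}{2\rho}$, and the chain rule to $\int F(\rho)+k(\rho)\frac{|\partial_x\rho|^2}{2}$.

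The deterministic terms reproduce the formal identity \eqref{formal energy ineq}. Transport terms and pressure cancel via the continuity equation. The Korteweg contribution, written as in \eqref{partial x K}, pairs with $u$ as
\begin{equation*}
\int u\,\rho\,\partial_x\Big(\partial_x(k\partial_x\rho)-\tfrac{k'}{2}|\partial_x\rho|^2\Big)dx .
\end{equation*}
After integrating by parts and using $\partial_t\rho=-\partial_x(\rho u)$, this equals $-\frac{d}{dt}\int k(\rho)\frac{|\partial_x\rho|^2}{2}dx$. This is the standard cancellation, and the regularity $\rho\in H^{s+1}$, $u\in H^s$ justifies all integrations by parts on $\mathbb{T}$.

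This gives
\begin{equation*}
\mathcal{H}(t\land\tau)+\int_0^{t\land\tau}\!\!\int\mu(\rho)|\partial_x u|^2
=\mathcal{H}(0)+\sum_k\int_0^{t\land\tau}\!\!\int\rho u F_k\,dx\,dW_k+\frac12\sum_k\int_0^{t\land\tau}\!\!\int\rho |F_k|^2dx\,ds .
\end{equation*}
By \eqref{F w}, the It\^o correction is bounded by $\sum_k f_k^2\int\rho(1+u^2)\lesssim \|\rho\|_{L^1}+\mathcal{H}$. Mass is conserved and normalized, so this is $\lesssim 1+\mathcal{H}(s)$.

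Next, raise to the power $p$, take the supremum over $[0,T\land\tau]$, and take expectation. For the martingale term we use the Burkholder--Davis--Gundy inequality. Its quadratic variation is $\sum_k\big(\int\rho uF_k\big)^2\lesssim \big(\int\rho(|u|+u^2)\big)^2\lesssim(1+\mathcal{H})^2$, where we used Cauchy--Schwarz and mass normalization. Hence
\begin{equation*}
\mathbb{E}\sup_{[0,t\land\tau]}\Big|\int_0^{\cdot}\cdots dW\Big|^p\lesssim \mathbb{E}\Big[\int_0^{t\land\tau}(1+\mathcal{H})^2ds\Big]^{p/2}\le \delta\,\mathbb{E}\sup_{[0,t\land\tau]}\mathcal{H}^p+C_\delta\,\mathbb{E}\int_0^{t\land\tau}(1+\mathcal{H})^pds .
\end{equation*}
We absorb the $\delta$ term into the left-hand side and control the correction term by H\"older's inequality in time. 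Gronwall's lemma applied to $t\mapsto\mathbb{E}\sup_{[0,t\land\tau]}\mathcal{H}^p$ then gives the bound with a constant depending on $T,p$ and $\sum f_k$. The dissipation bound follows from the same identity.

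The main obstacle is twofold. First, $F(\rho)$ is not signed when $\gamma=1$, so we must use $\rho\log\rho\ge -e^{-1}$ together with mass normalization to keep $\mathcal{H}$ bounded below. Second, absorbing the supremum on the left requires knowing a priori that $\mathbb{E}\sup\mathcal{H}^p<\infty$. We handle this by localizing with auxiliary stopping times $\tau_M=\inf\{t:\mathcal{H}(t)\ge M\}\land\tau$, proving the estimate uniformly in $M$, and letting $M\to\infty$ by monotone convergence.
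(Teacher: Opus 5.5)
Your proposal is correct and follows essentially the same route as the paper. That route is It\^o's formula for the kinetic energy, the pressure and Korteweg cancellations giving the energy balance, Burkholder--Davis--Gundy with the growth bound on $F_k$ for the martingale term, then Young's inequality and Gronwall; your localization by $\tau_M$ (to justify absorbing $\delta\,\mathbb{E}\sup\mathcal{H}^p$) and the lower bound $\rho\log\rho\ge -e^{-1}$ for $\gamma=1$ are details the paper leaves implicit, and you handle them soundly.
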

\begin{proof}
We apply the It\^{o} formula, see also Theorem A.4.1 \cite{Feir}, to the functional 
\begin{equation*}
F(\rho,u)(t) =\int_{\mathbb{T}} \frac{1}{2}\rho(t)|u(t)|^2dx,
\end{equation*}
therefore we obtain the following identity
\begin{equation} \label{En F}
\begin{split}
& \int_{\mathbb{T}} \frac{1}{2} \rho |u|^2(t)dx=\int_{\mathbb{T}} \frac{1}{2} \rho_0 |u_0|^2dx-\int_{0}^{t} \int_{\mathbb{T}} [ \rho u^2 \partial_x u +u\partial_x p(\rho)]dxds \\ & +\int_{0}^{t} \int_{\mathbb{T}} [u\partial_x(\mu(\rho)\partial_x u)+  u \partial_x \mathcal{K}] dxds+\frac{1}{2}\int_{0}^{t} \int_{\mathbb{T}} \sum_{k\in \mathbb{N}} \rho | F_k(\rho,u)|^2dxdt \\ & -\int_{0}^{t} \int_{\mathbb{T}} \frac{1}{2} |u|^2 \partial_x(\rho u )dxds+ \int_{0}^{t} \int_{\mathbb{T}} \sum_{k \in \mathbb{N}} \rho u \mathbb{F}(\rho,u)dxdW.
\end{split}
\end{equation}
Then we note that integrating by parts multiple times,  several cancellations occur and 
\begin{equation*}
\int_{\mathbb{T}} \partial_x p(\rho) u dx= \dfrac{d}{dt} \int_{\mathbb{T}} F(\rho) dx,
\end{equation*}
while the Korteweg tensor $\mathcal{K}$ give rise to the following term
\begin{equation*}
\begin{split}
 \int_{\mathbb{T}} \partial_x \mathcal{K} u \, dx & = \int_{\mathbb{T}} \partial_x \bigg( \partial_x(k(\rho) \partial_x \rho)- \dfrac{k'(\rho)}{2} | \partial_x \rho |^2 \bigg) \rho u \, dx = - \dfrac{d}{dt} \int_{\mathbb{T}} k(\rho)\dfrac{ | \partial_x \rho |^2}{2}dx.
\end{split}
\end{equation*}
Therefore the energy balance \eqref{En F}  can be written as 
\begin{equation}\label{energy balance stoch}
\begin{split}
& \int_{\mathbb{T}} \dfrac{1}{2} \rho |u|^2+ \dfrac{\rho^\gamma}{\gamma-1}+ k(\rho)\dfrac{ | \partial_x \rho |^2}{2}dx + \int_{0}^{t} \int_{\mathbb{T}} \mu(\rho)| \partial_x u |^2 dxds \\ & = \int_{\mathbb{T}} \dfrac{1}{2} \rho_0 |u_0|^2+ \dfrac{\rho_0^\gamma}{\gamma-1}+ k(\rho_0)\dfrac{ | \partial_x \rho_0 |^2}{2}dx+ \frac{1}{2} \sum_{k=1}^{\infty}\int_{0}^{t}\int_{\mathbb{T}} \rho |F_k|^2dxds+\int_{0}^{t} \int_{\mathbb{T}} \rho u \mathbb{F}(\rho,u) dxdW.
\end{split}
\end{equation}
In order to estimate the stochastic integral in the right-hand-side of \eqref{energy balance stoch} we make use of the Burkholder-Davis-Gundy inequality,  Proposition 2.3.8 in \cite{Feir}. To this purpose, we take the $\sup$ in time and the $p$-th power in \eqref{energy balance stoch}, then we  apply the expectation to get
\begin{equation}\label{energy stoch}
\begin{split}
& \mathbb{E} \bigg| \sup_{t \in [0.T\land \tau]} \int_{\mathbb{T}} \bigg[ \dfrac{1}{2} \rho |u|^2+ \dfrac{\rho^\gamma}{\gamma-1}+ k(\rho)\dfrac{ | \partial_x \rho |^2}{2} \bigg] dx \bigg|^p + \mathbb{E} \bigg| \int_{0}^{T\land \tau} \int_{\mathbb{T}} \mu(\rho)| \partial_x u |^2 dxdt \bigg|^p \\ &  \lesssim \mathbb{E} \bigg| \int_{\mathbb{T}} \bigg[ \dfrac{1}{2} \rho_0 |u_0|^2+ \dfrac{\rho_0^\gamma}{\gamma-1}+ k(\rho_0)\dfrac{ | \partial_x \rho_0 |^2}{2} \bigg] dx \bigg|^p+ \mathbb{E} \bigg | \frac{1}{2} \sum_{k=1}^{\infty}\int_{0}^{T\land \tau}\int_{\mathbb{T}} \rho |F_k|^2dxdt \bigg|^p \\ & +\mathbb{E} \bigg[ \sup_{t \in [0,T \land \tau]} \bigg| \int_{0}^{t} \int_{\mathbb{T}} \rho u \mathbb{F}(\rho,u) dxdW \bigg|^p \bigg].
\end{split}
\end{equation}
Thus, by using Proposition 2.3.8 in \cite{Feir} and \eqref{F w}  we have 
\begin{equation}\label{stoch int enrgy estimate}
\begin{split}
\mathbb{E} \bigg[ \sup_{t \in [0.T\land \tau]} \bigg|  \int_{0}^{t} \int_{\mathbb{T}} \rho u \mathbb{F}(\rho,u) dxdW \bigg|^p \bigg] & \lesssim \mathbb{E} \bigg[ \int_{0}^{T\land \tau} \sum_{k \in \mathbb{N}} \bigg| \int_{\mathbb{T}} \rho u F_k dx \bigg|^2 dt \bigg]^{\frac{p}{2}} \\ & \le \mathbb{E}\bigg[ \int_{0}^{T\land \tau}  \bigg| \int_{\mathbb{T}} ( \rho+ \rho |u|^2) dx  \bigg|^2 dt \bigg]^{\frac{p}{2}},
\end{split}
\end{equation}
while the It\^{o} correction term can be estimated by \eqref{F1}-\eqref{F2} as
\begin{equation*}
\begin{split}
\sum_{k \in \mathbb{N}} \int_{0}^{T\land \tau} \int_{\mathbb{T}} \rho |F_k|^2dxdt \le \int_{0}^{T\land \tau} \sum_{k \in \mathbb{N}} f^2_k \int_{\mathbb{T}} ( \rho + \rho |u|^2) dxdt \lesssim \int_{0}^{T\land \tau} \int_{\mathbb{T}} ( \rho+ \rho |u|^2) dxdt.
\end{split}
\end{equation*}
To conclude, we apply Young inequality in \eqref{stoch int enrgy estimate} and the standard Gronwall Lemma to deduce
\begin{equation*}
\begin{split}
& \mathbb{E} \bigg| \sup_{t \in [0,T\land \tau]} \int_{\mathbb{T}} \bigg[\dfrac{1}{2}\rho |u|^2+ \dfrac{\rho^\gamma}{\gamma-1}+ k(\rho)\dfrac{ | \partial_x \rho |^2}{2} \bigg] dx \bigg|^p +\mathbb{E} \bigg| \int_{0}^{T\land \tau} \int_{\mathbb{T}} \mu(\rho)| \partial_x u |^2 dxdt \bigg|^p \\ & \lesssim 1+ \mathbb{E}\bigg|  \int_{\mathbb{T}} \bigg[\dfrac{1}{2}\rho_0 |u_0|^2+ \dfrac{\rho_0^\gamma}{\gamma-1}+ k(\rho_0)\dfrac{ | \partial_x \rho_0 |^2}{2} \bigg] dx \bigg|^p 
\end{split}
\end{equation*}
which concludes our proof.
\end{proof}
\noindent
The next proposition is the BD entropy estimates.  To begin with, we introduce the effective velocity 
\begin{equation*}
V:=u+ \mu(\rho) \dfrac{\partial_x \rho}{\rho^2}
\end{equation*}
and we observe that the quantity $Q=\mu(\rho) \dfrac{\partial_x \rho}{\rho^2}$ satisfies the following transport equation
\begin{equation*}
\partial_t Q+ u \partial_x Q= -\dfrac{1}{\rho} \partial_x (\mu(\rho) \partial_x u),
\end{equation*}
thus the system in the new variables $(\rho, \rho V)$ reads 
\begin{equation*}
\begin{cases}
\partial_t \rho+ \partial_x(\rho u)=0, \\ 
\text{d} (\rho V)+ [\partial_x (\rho u V) + \partial_x p(\rho)]\text{d}t= \partial_x \mathcal{K}\text{d}t+ \rho \mathbb{F}(\rho,u) \text{d}W.
\end{cases}
\end{equation*}
Then we infer the following result:
\begin{proposition} \label{BD prop}
Let $(\rho,u)$ be a strong solution of \eqref{main system}-\eqref{initial conditions}, assume that the strong coercivity condition \eqref{SCC} holds. Then for any $p \in [1, \infty),$ the following entropy inequality is satisfied
\begin{equation}\label{BD ENTROPY visk}
\begin{split}
& \mathbb{E} \bigg[ \sup_{t \in [0,T\land \tau]} \mathcal{E}(t) \bigg]^p +\mathbb{E} \bigg[ \int_{0}^{T \land \tau} \mathcal{D}[\rho]ds \bigg]^p \lesssim 1+ \mathbb{E}\bigg[ \mathcal{E}(0) \bigg]^p 
\end{split}
\end{equation}
for an entropy dissipation functional $D[\rho]:=D_{\alpha,\gamma}[\rho]+D_{\alpha,\beta}[\rho]$ with, 
\begin{equation}\label{entropy D pres}
\mathcal{D}_{\alpha,\gamma}[\rho]= \int_{\mathbb{T}} \dfrac{4\gamma}{(\gamma+\alpha-1)^2} | \partial_x \rho^{\frac{\gamma+\alpha-1}{2}} |^2 dx
\end{equation}
and 
\begin{equation}\label{entropy D cap}
\mathcal{D}_{\alpha,\beta}[\rho]= \int_{\mathbb{T}} \bigg( \frac{4}{(\alpha+\beta+1)^2} | \partial_{xx} \rho^{\frac{\alpha+ \beta+1}{2}}|^2 + c(\alpha,\beta) | \partial_x \rho^{\frac{\alpha+\beta+1}{4}} |^4 \bigg) dx,
\end{equation}
with the constant $c(\alpha,\beta)$ defined by 
\begin{equation*}
c(\alpha,\beta)= \dfrac{64}{(\alpha+\beta+1)^2} \bigg[ \dfrac{(\alpha-\beta-1)(1-\alpha)}{(\alpha+\beta+1)^2}-\dfrac{\beta}{3(\alpha+\beta+1)} \bigg] 
\end{equation*}
\end{proposition}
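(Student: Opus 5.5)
We mimic the proof of Proposition \ref{energy prop}, now working with the effective velocity $V=u+Q$, $Q=\mu(\rho)\partial_x\rho/\rho^2$, and the reformulated system for $(\rho,\rho V)$ written just before the statement. The plan has four steps.

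\textbf{Step 1: It\^o formula for the kinetic part.} Since $Q$ solves a pathwise transport equation with no stochastic differential, $\mathrm{d}(\rho V)$ carries exactly the noise $\rho\mathbb{F}(\rho,u)\mathrm{d}W$. We apply the It\^o formula (Theorem A.4.1 in \cite{Feir}) to $\int_{\mathbb{T}}\frac12\rho V^2dx$. As for the energy, the convective terms cancel against the continuity equation, and we get
\begin{equation*}
\mathrm{d}\int_{\mathbb{T}}\tfrac12\rho V^2dx=\Big(-\int_{\mathbb{T}}V\partial_x p\,dx+\int_{\mathbb{T}}V\partial_x\mathcal{K}\,dx+\tfrac12\sum_k\int_{\mathbb{T}}\rho|F_k|^2dx\Big)\mathrm{d}t+\int_{\mathbb{T}}\rho V\mathbb{F}\,dx\,\mathrm{d}W .
\end{equation*}

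\textbf{Step 2: splitting $V=u+Q$ in the deterministic terms.} The $u$-parts reproduce, exactly as in Proposition \ref{energy prop}, the time derivatives of $\int F(\rho)$ and $\int k(\rho)|\partial_x\rho|^2/2$. The $Q$-parts are the new terms.

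For the pressure, integration by parts gives
\begin{equation*}
\int_{\mathbb{T}} Q\partial_x p\,dx=\gamma\int_{\mathbb{T}}\rho^{\gamma+\alpha-3}|\partial_x\rho|^2dx ,
\end{equation*}
which is precisely $\mathcal{D}_{\alpha,\gamma}[\rho]$ in \eqref{entropy D pres}.

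For the capillarity, we use \eqref{partial x K} to write
\begin{equation*}
-\int_{\mathbb{T}} Q\partial_x\mathcal{K}\,dx=\int_{\mathbb{T}}\partial_x\big(\rho^{\alpha-1}\partial_x\rho\big)\Big(\partial_x(k\partial_x\rho)-\tfrac{k'}{2}|\partial_x\rho|^2\Big)dx .
\end{equation*}
This is a quadratic form in $\partial_{xx}\rho$ and $|\partial_x\rho|^2$ with power weights. We then change unknown to $w=\rho^{\frac{\alpha+\beta+1}{4}}$, or equivalently $w^2=\rho^{\frac{\alpha+\beta+1}{2}}$. Expanding and repeatedly using the identity
\begin{equation*}
\int_{\mathbb{T}} \rho^m|\partial_x\rho|^2\partial_{xx}\rho\,dx=-\frac{m}{3}\int_{\mathbb{T}}\rho^{m-1}|\partial_x\rho|^4dx ,
\end{equation*}
we remove all mixed terms. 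The aim is to reach the form
\begin{equation*}
\frac{4}{(\alpha+\beta+1)^2}\int_{\mathbb{T}}|\partial_{xx}w^2|^2dx+c(\alpha,\beta)\int_{\mathbb{T}}|\partial_x w|^4dx .
\end{equation*}

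\textbf{Step 3: nonnegativity of $\mathcal{D}_{\alpha,\beta}$.} Here \eqref{SCC} enters. We bound $\int|\partial_x w|^4$ by $\int|\partial_{xx}w^2|^2$ via an integration-by-parts inequality of the type $\int|\partial_x w|^4\le C\int|\partial_{xx}w^2|^2$ on $\mathbb{T}$. This shows that the quadratic form is coercive exactly in the range $2\alpha-4\le\beta\le2\alpha-1$, following \cite{Germain LeFloch}. If the sign of $c(\alpha,\beta)$ fails, we absorb a fraction of the first term. This step, together with the algebra of Step 2 for general $(\alpha,\beta)$ not tied by a BD relation, is the main obstacle. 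The plan is to track the coefficients symbolically, first in the variables $(\partial_{xx}\rho,|\partial_x\rho|^2)$ and then in $w$.

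\textbf{Step 4: stochastic terms and closing the estimate.} We use \eqref{F w} together with $|\rho V F_k|\le f_k(\rho+\rho|u|^2+\rho V^2)$. The It\^o correction is then bounded by $\int(\rho+\rho u^2)$, which Proposition \ref{energy prop} controls in every moment. For the martingale term we apply Burkholder--Davis--Gundy, giving
\begin{equation*}
\mathbb{E}\sup_t\Big|\int_0^t\!\!\int_{\mathbb{T}}\rho V\mathbb{F}\,dx\,\mathrm{d}W\Big|^p\lesssim\mathbb{E}\Big(\int_0^{T\wedge\tau}\Big(1+\mathcal{H}+\mathcal{E}\Big)^2dt\Big)^{p/2}.
\end{equation*}
We then use Young's inequality to absorb $\frac12\mathbb{E}\sup\mathcal{E}^p$, invoke Proposition \ref{energy prop} for the $\mathcal{H}$ part, and apply Gronwall's lemma, taking $p$-th powers and expectations as before. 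This yields \eqref{BD ENTROPY visk}.
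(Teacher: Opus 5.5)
Your proposal is correct and is essentially the paper's proof: It\^o's formula for $\int_{\mathbb{T}}\frac12\rho V^2dx$, then splitting $V=u+Q$ so that the $u$-parts rebuild $\mathcal{E}$ while the $Q$-parts produce $\mathcal{D}_{\alpha,\gamma}$ and $\mathcal{D}_{\alpha,\beta}$, then the Alazard--Bresch/Germain--LeFloch inequality for the sign of $\mathcal{D}_{\alpha,\beta}$, and finally BDG, Young and Gronwall. Your identity $\int_{\mathbb{T}}\rho^m|\partial_x\rho|^2\partial_{xx}\rho\,dx=-\frac{m}{3}\int_{\mathbb{T}}\rho^{m-1}|\partial_x\rho|^4dx$ does reproduce the stated $c(\alpha,\beta)$, and your Step 4 is slightly more careful than the paper, since it keeps the $\rho u^2$ contribution coming from $|F_k|\le f_k(1+|u|)$.

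The one point to pin down is in Step 3: the inequality must be used with its optimal constant $C=\frac{9}{16}$ (for $f=w^2$). Then $\mathcal{D}_{\alpha,\beta}[\rho]\ge\big(\frac{16}{9\theta^2}+c(\alpha,\beta)\big)\int_{\mathbb{T}}|\partial_x w|^4dx$ with $\theta=\frac{\alpha+\beta+1}{2}$, and the bracket is nonnegative if and only if $(2\alpha-\beta-1)(2\alpha-\beta-4)\le0$, i.e.\ exactly \eqref{SCC}. A cruder constant yields a strictly smaller range, and at the endpoints of \eqref{SCC} the whole first term, not just a fraction of it, is consumed.
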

\begin{remark}\label{degenerate case}
The term $\sqrt{\rho}V$ corresponds to $\frac{\partial_x \rho^{(\alpha-\frac{1}{2})}}{(\alpha-\frac{1}{2})}.$ By construction, the quantity $\frac{\partial_x \rho^\theta}{\theta}$ reduces to $\partial_x \log \rho$ in the degenerate case $\theta=0.$
\end{remark}
\begin{proof}
With similar lines of argument with respect to the proof of Proposition \ref{energy prop}, we apply It\^{o} formula to the functional 
\begin{equation*}
F(\rho,V)(t) =\int_{\mathbb{T}} \frac{1}{2}\rho(t)|V(t)|^2dx,
\end{equation*}
and we obtain the following identity
\begin{equation} \label{Ent F}
\begin{split}
& \int_{\mathbb{T}} \frac{1}{2} \rho |V|^2(t)dx=\int_{\mathbb{T}} \frac{1}{2} \rho_0 |V_0|^2dx-\int_{0}^{t} \int_{\mathbb{T}} [ \rho V u \partial_x V +V\partial_x p(\rho)]dxds \\ & +\int_{0}^{t} V \partial_x \mathcal{K} dxds+\frac{1}{2}\int_{0}^{t} \int_{\mathbb{T}} \sum_{k\in \mathbb{N}} \rho | F_k(\rho,u)|^2dxdt \\ & - \int_{0}^{t} \int_{\mathbb{T}} \frac{1}{2} |V|^2 \partial_x(\rho u )dxds+ \int_{0}^{t} \int_{\mathbb{T}} \sum_{k \in \mathbb{N}} \rho V \mathbb{F}(\rho,u)dxdW.
\end{split}
\end{equation}
The contribution of the terms involving only $u$ have been already carried out in the proof of Proposition \ref{energy prop}. Hence we focus on the terms involving the transport part $Q.$ Specifically, we have 
\begin{equation}\label{pressure bd}
\begin{split}
& \int_{\mathbb{T}} \partial_x p(\rho) Q \, dx = \dfrac{4\gamma}{(\gamma+\alpha-1)^2}\int_{\mathbb{T}} | \partial_x \rho^{\frac{\gamma+\alpha-1}{2}} |^2 \, dx,
\end{split}
\end{equation}
while the analysis of the capillary term is more delicate and strongly relies on the strong coercivity condition \eqref{SCC}. To be precise, integrating by parts several times we have
\begin{equation}\label{capillarity BD}
-\int_{\mathbb{T}} \partial_x \mathcal{K} \cdot Q \, dx = \frac{4}{(\alpha+\beta+1)^2} \int_{\mathbb{T}} | \partial_{xx} \rho^{\frac{\alpha+ \beta+1}{2}} |^2 dx  + c(\alpha, \beta) \int_{\mathbb{T}} |\partial_x \rho^{\frac{\alpha+\beta+1}{4}} |^4 dx.
\end{equation}
Note that the constant $c(\alpha,\beta)$ is not necessarily positive. On the other hand,  the strong coercivity condition \eqref{SCC} characterizes the range in which $D_{\alpha,\beta}[\rho]$ is non-negative.  The proof of the above dichotomy is given in Appendix \ref{Deterministic tools}, see Proposition \ref{char scc}. The main element of the proof is the appropriate use the functional inequality given in Proposition \ref{Functional ineq }, which can be also find in \cite{Alazard}, Proposition 2.8.
\\
\\
By virtue of \eqref{pressure bd} and \eqref{capillarity BD} we rewrite the energy balance as 
\begin{equation}\label{entropy stoch}
\begin{split}
& \mathcal{E}(t) + \dfrac{4\gamma}{(\gamma+\alpha-1)^2} \int_{0}^{t}\int_{\mathbb{T}} | \partial_x \rho^{\frac{\gamma+\alpha-1}{2}} |^2 \, dxds +  \frac{4}{(\alpha+\beta+1)^2} \int_{0}^{t} \int_{\mathbb{T}} | \partial_{xx} \rho^{\frac{\alpha+ \beta+1}{2}} |^2 dxds  \\ & + c(\alpha, \beta)\int_{0}^{t} \int_{\mathbb{T}}|\partial_x \rho^{\frac{\alpha+\beta+1}{4}} |^4 dxds = \mathcal{E}(0)+ \frac{1}{2} \sum_{k=1}^{\infty}\int_{0}^{t}\int_{\mathbb{T}} \rho |F_k|^2dxds+ \int_{0}^{t}\int_{\mathbb{T}} \rho V \mathbb{F}(\rho,u) dxdW.
\end{split}
\end{equation}
Concerning the right-hand-side of \eqref{entropy stoch} we have
\begin{equation*}
\begin{split}
\sum_{k \in \mathbb{N}} \int_{0}^{T\land \tau} \int_{\mathbb{T}} \rho |F_k|^2dxdt \le \int_{0}^{T\land \tau} \sum_{k \in \mathbb{N}} f^2_k \int_{\mathbb{T}} ( \rho + \rho |u|^2) dxdt \lesssim \int_{0}^{T\land \tau} \int_{\mathbb{T}} ( \rho+ \rho |u|^2) dxdt,
\end{split}
\end{equation*}
therefore similarly to Proposition \ref{energy prop} we estimate the stochastic integral as follows
\begin{equation}\label{stoch int bd entr}
\begin{split}
\mathbb{E} \bigg[ \sup_{t \in [0.T\land \tau]} \bigg| \int_{0}^{t} \int_{} \rho V \mathbb{F}(\rho,u) dxdW \bigg|^p \bigg] & \lesssim \mathbb{E} \bigg[ \int_{0}^{T\land \tau} \sum_{k \in \mathbb{N}} \bigg| \int_{\mathbb{T}} \rho V F_k dx \bigg|^2 dt \bigg]^{\frac{p}{2}} \\ & \le \mathbb{E}\bigg[ \int_{0}^{T\land \tau}  \bigg| \int_{\mathbb{T}} ( \rho+ \rho |V|^2) dx \bigg|^2 dt \bigg]^{\frac{p}{2}}
\end{split}
\end{equation}
and we use Young inequality and a standard Gronwall argument to deduce \eqref{BD ENTROPY visk}.
\end{proof}
\noindent
The next proposition provides an upper and a lower bound for the density. The latter estimate is ensured by the no vacuum condition \eqref{NV}.
\begin{proposition}\label{prop:vacuum}
Let $(\rho,u)$ be a strong solution of \eqref{main system}-\eqref{initial conditions}, then for all $p\in [1, \infty)$ the following bounds hold:
\begin{itemize}
\item [(1)] For any $\alpha, \beta$
\begin{equation}\label{rho L infty beta}
\rho^{\frac{\beta}{2}+1} \in L^p(\Omega;L^{\infty}(0,T\land \tau;L^\infty(\mathbb{T}))).
\end{equation}
\item [(2)] For any $\alpha, \beta$ satisfying \eqref{SCC}
\begin{equation}\label{rho L infity alpha}
\rho^{\alpha-\frac{1}{2}} \in L^p(\Omega;L^{\infty}(0,T\land \tau;L^\infty(\mathbb{T})))
\end{equation}
and 
\begin{equation}\label{rho L infty}
\rho \in L^p(\Omega;L^{\infty}(0,T\land \tau;L^\infty(\mathbb{T}))).
\end{equation}
\item [(3)]
For any $\alpha, \beta$ such that $\beta \le -2$ or both \eqref{SCC} and \eqref{NV} hold
\begin{equation}\label{1 div rho L infty}
\dfrac{1}{\rho} \in L^p(\Omega;L^{\infty}(0,T\land \tau;L^\infty(\mathbb{T}))).
\end{equation}
\end{itemize}
\end{proposition}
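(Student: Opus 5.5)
The plan is to combine one-dimensional Sobolev embedding $W^{1,1}(\mathbb{T})\hookrightarrow L^\infty(\mathbb{T})$ with the energy estimate of Proposition \ref{energy prop}, the BD entropy estimate of Proposition \ref{BD prop}, and conservation of mass $\int_{\mathbb{T}}\rho\,dx=\int_{\mathbb{T}}\rho_0\,dx=1$. For a positive function $g$ on the torus we have
\[
\|g\|_{L^\infty}\le \Big|\int_{\mathbb{T}} g\,dx\Big|+\int_{\mathbb{T}}|\partial_x g|\,dx,
\]
and, if $g$ vanishes somewhere or has small mean, the first term can be replaced by $\inf g$. Each item reduces to bounding $\partial_x$ of a power of $\rho$ in $L^2$ or $L^1$ by $\mathcal{H}$ or $\mathcal{E}$, uniformly in $t\in[0,T\wedge\tau]$. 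Then we take $\sup_t$, raise to the $p$-th power and take expectations, which invokes \eqref{energy visk} or \eqref{BD ENTROPY visk}.

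\textbf{Item (1).} We have $\frac{k(\rho)}{2}|\partial_x\rho|^2=\frac{2}{(\beta+2)^2}|\partial_x\rho^{\frac{\beta}{2}+1}|^2$ for $\beta\neq-2$, so $\sup_t\|\partial_x\rho^{\frac\beta2+1}\|_{L^2}^2\lesssim\sup_t\mathcal{H}(t)$. If $\beta>-2$, the exponent is positive. By mass conservation and continuity, $\rho(t,x_0)=1$ at some point $x_0$, hence $\rho^{\frac\beta2+1}(t,x_0)=1$, and
\[
\|\rho^{\frac\beta2+1}\|_{L^\infty}\le 1+\|\partial_x\rho^{\frac\beta2+1}\|_{L^2}.
\]
If $\beta<-2$, the same argument applies with the point where $\rho=1$ as anchor. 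In all cases $\|\rho^{\frac\beta2+1}\|_{L^\infty}\le 1+C\mathcal{H}^{1/2}$, and Proposition \ref{energy prop} gives the claim. The borderline $\beta=-2$ uses $\partial_x\log\rho$ (cf. Remark \ref{degenerate case}) and the same anchoring at $x_0$.

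\textbf{Item (2).} Under \eqref{SCC}, Proposition \ref{BD prop} controls $\int\rho V^2$. Subtracting $\int\rho u^2\le 2\mathcal{H}$ gives
\[
\int_{\mathbb{T}}\rho\,\mu(\rho)^2\frac{|\partial_x\rho|^2}{\rho^4}\,dx\lesssim\mathcal{E}+\mathcal{H},
\]
that is, $\|\partial_x\rho^{\alpha-\frac12}\|_{L^2}^2\lesssim\mathcal{E}+\mathcal{H}$ (Remark \ref{degenerate case}). The same anchoring at the point where $\rho=1$ yields \eqref{rho L infity alpha}.

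For \eqref{rho L infty}, suppose first $\alpha>\frac12$. Then $\rho\le\max(1,\rho^{\alpha-\frac12})^{1/(\alpha-\frac12)}$, and the $L^p(\Omega)$ bound for all $p$ transfers to any power. If $\alpha<\frac12$, use item (1) instead when $\beta>-2$. The remaining case is $\alpha\le\frac12$ and $\beta\le-2$, which is compatible with \eqref{SCC} since $2\alpha-4\le-3$. There one needs a different route: interpolate $\partial_x\rho$ via Hölder between the two weighted gradients, or use the pressure/energy bound $\int\rho^\gamma$ together with the $L^1$ gradient $\int|\partial_x\rho|\le(\int\rho^{a}|\partial_x\rho|^2)^{1/2}(\int\rho^{-a})^{1/2}$. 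I expect this case bookkeeping to be the main nuisance, though not deep.

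\textbf{Item (3).} We bound $1/\rho$ through a negative power. If $\beta<-2$, then $\rho^{\frac\beta2+1}$ is a negative power with the $L^\infty$ bound from item (1), so
\[
\frac1\rho=\big(\rho^{\frac\beta2+1}\big)^{-1/(\frac\beta2+1)}
\]
is in $L^\infty$, with all moments finite. If $\beta=-2$, the $\log\rho$ bound gives both sides. If $\alpha<\frac12$ under \eqref{SCC}, item (2) bounds $\rho^{\alpha-\frac12}$, a negative power, and the same argument applies. If $\alpha=\frac12$, the $\partial_x\log\rho$ bound controls $\|\log\rho\|_{L^\infty}$, and exponentiating, $1/\rho\le e^{\|\log\rho\|_{L^\infty}}$. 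This last case is the obstacle: exponential moments are not directly given by \eqref{BD ENTROPY visk}, which only gives polynomial moments. I would try to redo the It\^o/Gronwall argument of Proposition \ref{BD prop} for $\exp(\delta\,\mathcal{E})$ using the linear growth \eqref{F w}. Alternatively, I would accept moments only in the weaker sense $\mathbb{P}$-a.s. finiteness plus polynomial control of $\log$, which suffices for the later Markov-inequality continuity argument.
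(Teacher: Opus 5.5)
Your items (1) and \eqref{rho L infity alpha} follow the paper's argument: anchor at a point where $\rho=1$, use $\sup_x g\le 1+\int_{\mathbb{T}}|\partial_x g|\,dx$, then take moments from Propositions \ref{energy prop} and \ref{BD prop}. Isolating $\int_{\mathbb{T}}\rho Q^2\,dx\lesssim 1+\mathcal{E}+\mathcal{H}$ by subtracting $\int_{\mathbb{T}}\rho u^2\,dx$ is, if anything, more careful than Remark \ref{degenerate case}.

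The genuine gap is \eqref{rho L infty} in the case $\alpha\le\frac12$, $\beta\le-2$. This case is non-empty under \eqref{SCC}, and you leave it open. Both routes you sketch are circular as written. Cauchy--Schwarz on $\partial_x\rho$ against $\rho^{\beta}|\partial_x\rho|^2$ or $\rho^{2\alpha-3}|\partial_x\rho|^2$ needs $\int_{\mathbb{T}}\rho^{-\beta}\,dx$ or $\int_{\mathbb{T}}\rho^{3-2\alpha}\,dx$. These have exponents $\ge 2$ and are not controlled unless $\gamma$ is that large. Interpolating between the two negative weights still leaves such a power of $\rho$ to integrate.

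The missing idea is to apply Cauchy--Schwarz to $\partial_x\rho^\alpha$ rather than to $\partial_x\rho$. We have $\partial_x\rho^\alpha=\alpha\rho^{\alpha-1}\partial_x\rho=\alpha\,\rho\,Q$, which is the paper's identity $\partial_x\rho^\alpha=\frac{\alpha}{\alpha-\frac12}\sqrt\rho\,\partial_x\rho^{\alpha-\frac12}$. So the factor paired with $\sqrt\rho\,Q$ is $\sqrt\rho$, whose square integral is the conserved unit mass:
\[
\sup_{x\in\mathbb{T}}\rho^\alpha\le 1+\alpha\int_{\mathbb{T}}\rho|Q|\,dx\le 1+\alpha\Big(\int_{\mathbb{T}}\rho\,dx\Big)^{1/2}\Big(\int_{\mathbb{T}}\rho Q^2\,dx\Big)^{1/2}\lesssim 1+(\mathcal{E}+\mathcal{H})^{1/2}.
\]
This gives \eqref{rho L infty} for every $\alpha>0$ at once, with no case split. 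Your own route can also be rescued: bound $\int_{\mathbb{T}}\rho^{3-2\alpha}\,dx\le\|\rho\|_{L^\infty}^{2-2\alpha}$ and absorb with Young's inequality, which closes precisely because $\alpha>0$. At $\alpha=0$ neither version closes, since the identity then only bounds $\log\rho$. There one must pair $\rho^{-\frac32}\partial_x\rho$ with the pressure instead, via $\partial_x\rho^{\frac{\gamma-1}{2}}=\frac{\gamma-1}{2}\rho^{\frac{\gamma}{2}}\rho^{-\frac32}\partial_x\rho$, which requires $\gamma>1$.

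In item (3), your non-degenerate cases ($\beta<-2$ via \eqref{rho L infty beta}, $\alpha<\frac12$ via \eqref{rho L infity alpha}) are exactly what the paper does. The exponential-moment obstacle you raise is real, and the paper's proof passes over it. It simply asserts that \eqref{1 div rho L infty} follows from \eqref{rho L infty beta} and \eqref{rho L infity alpha}, which is literally true only when $\frac\beta2+1<0$ or $\alpha-\frac12<0$.

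You should, however, apply your objection consistently. The case $\beta=-2$ is the same degenerate situation: the energy controls only $\|\partial_x\log\rho\|_{L^2}\lesssim(1+\mathcal{H})^{1/2}$. This gives $\|1/\rho\|_{L^\infty}\le\exp\big(C(1+\mathcal{H})^{1/2}\big)$, not polynomial moments. The same holds for $\alpha=\frac12$ with $-2\le\beta\le0$. Saying ``the $\log\rho$ bound gives both sides'' at $\beta=-2$ hides exactly the difficulty you flag at $\alpha=\frac12$.

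Your suggested repair through $\exp(\delta\mathcal{E})$ does not close by Gronwall. Under \eqref{F w}, the quadratic variation of the stochastic integral in \eqref{entropy stoch} is only bounded by $C\,\mathcal{E}(1+\mathcal{H})$. So the It\^o correction $\frac{\delta^2}{2}e^{\delta\mathcal{E}}\,\mathcal{E}(1+\mathcal{H})$ is not dominated by a multiple of $e^{\delta\mathcal{E}}$. Your fallback (a.s.\ finiteness plus polynomial moments of $\log\rho$) is a weaker statement than \eqref{1 div rho L infty} and has to be presented as such.
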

\begin{proof}
By virtue of the mean value theorem, we infer the existence of a point $\bar x$ such that for all $t \in (0,T\land \tau)$ and $\omega \in \Omega,$ $$\rho(\bar x,t,  \omega)= \int_{\mathbb{T}} \rho(x,t,\omega) dx.$$
Then since $\rho$ has unit mass,  we use the fundamental theorem of calculus to deduce
\begin{equation}\label{FTC 0}
\rho^\theta(x,t,\omega)=1+ \int_{\bar x}^{ x} \partial_y \rho^\theta dy \le 1+ \int_{\mathbb{T}} |\partial_x \rho^\theta| dx,
\end{equation}
for any $\theta \in \mathbb{R}.$
Thus by taking in order the $\sup$ in space and time in \eqref{FTC 0}, the $p-$th power and the expectation, we choose $\theta= \frac{\beta}{2}+1$ and use Proposition \ref{energy prop} to deduce
 \eqref{rho L infty beta}.  Similarly we choose $\theta= \alpha-\frac{1}{2}$ and use Proposition \eqref{BD prop} to infer \eqref{rho L infity alpha}, provided that \eqref{SCC} is satisfied.
Note that in the degenerate case, $\theta=0$ the above estimates hold for $\log \rho.$ Indeed we have 
\begin{equation*}
\log \rho(x,t,\omega)\le \int_{\mathbb{T}} |\partial_x \log \rho| dx
\end{equation*}
and the claims follow by recalling Remark \ref{degenerate case}.
On the other hand we observe that 
\begin{equation}\label{rho L infty proof}
\rho^\alpha (x,t,\omega) \le 1 + \dfrac{\alpha}{| \alpha-\frac{1}{2}|}\int_{\mathbb{T}} \sqrt{\rho} |\partial_x \rho^{\alpha-\frac{1}{2}}| dx,
\end{equation}
hence, again by taking the sup on $\mathbb{T} \times (0,T \land \tau),$ the p-th power and the expectation in \eqref{rho L infty proof}, we use H\"{o}lder inequality to deduce \eqref{rho L infty}. To conclude we observe that by assuming that \eqref{NV} holds, then also negative powers of the density are controlled. Thus \eqref{1 div rho L infty} easily follows by \eqref{rho L infty beta} and \eqref{rho L infity alpha}.
\end{proof}
\noindent
For a better readability, we summarize the regularity for strong solutions obtained in the range given by  \eqref{SCC} and \eqref{NV}.
\begin{equation}
\begin{split}
& \rho \in L^p(\Omega; L^\infty (0,T\land \tau;H^1(\mathbb{T}))) \cap L^p(\Omega; L^2 (0,T\land \tau;H^2(\mathbb{T}))), \\ &  u \in L^p(\Omega; L^\infty (0,T\land \tau;L^2(\mathbb{T}))) \cap L^p(\Omega; L^2(0,T\land \tau;H^1 (\mathbb{T}))),  \\ & 
\rho \in L^p(\Omega; L^\infty (0,T\land \tau;L^\infty(\mathbb{T}))),  \; \,\frac{1}{\rho} \in L^p(\Omega; L^\infty (0,T\land \tau;L^\infty(\mathbb{T}))),
\end{split}
\end{equation}
for all $p \in [1,\infty).$
\subsection{High-order estimates}
The next part of this Section is devoted to the derivation of high-order estimates for Sobolev norms.  The key elements for the proof of these results are the positive lower bound for the density obtained in the \eqref{NV} regime and the time-integrability condition for the second derivative of $\rho$ given by Proposition \ref{BD prop}.
Also an appropriate change of variable will be used, together with a stochastic version of the Gronwall lemma. 
\\
\\
Because of the nonlinear structure of the Korteweg tensor, it is convenient to introduce the new variable 
\begin{equation*}
A(\rho):=\sqrt{\dfrac{k(\rho)}{\rho}} \partial_x \rho.
\end{equation*}
and the auxiliary quantity $\mu_k(\rho)$ satisfying $\mu'_k(\rho)= \sqrt{\rho k(\rho)}.$
In particular, system \eqref{main system} in the new variables reads 
\begin{equation}\label{eq A(rho)}
\partial_t A+ \partial_x ( A u)+ \partial_x (\mu'_k(\rho) \partial_x u)= 0
\end{equation}
and 
\begin{equation}\label{u systm}
du+ \big[u\partial_x u+ \dfrac{\partial_x p(\rho)}{\rho}\big]dt= \bigg[\dfrac{\partial_x (\mu(\rho)\partial_x u)}{\rho}+ \partial_x (\mu'_k(\rho) \partial_x A) + A \partial_x A \bigg]dt+ \mathbb{F}(\rho,u) dW.
\end{equation}
which is sometimes also referred as the augmented system.
Note that by virtue of the introduction of the new quantity $A(\rho)$ 
the high-order derivative terms appearing in \eqref{eq A(rho)}-\eqref{u systm} exhibit a skew-adjoint structure. Therefore a fundamental cancellation occurs in the $H^{s+1}(\mathbb{T}) \times H^{s}(\mathbb{T})$ estimates.
\\
\\
We start our analysis with the following proposition providing an $H^2 \times H^1$ bound. The application of the stochastic Gronwall Lemma, see Lemma 5.3 in \cite{Glatt-Holtz} yields the use of an auxiliary stopping time $\gamma^{(1)}_M$ appearing in the statement of the next proposition. The precise definition of $\gamma^{(1)}_M$ will be later given in \eqref{gamma1} and by virtue of further applications such as the extension argument given in Section \ref{Sec5}, we anticipate in the following proof that $\lim_{M \rightarrow \infty} \gamma^{(1)}_M= \infty \; a.s..$ 
\begin{proposition}\label{Prop h2 est}
Let $(\rho,u)$ be a solution of \eqref{main system}-\eqref{initial conditions}. Assume in addition that \eqref{SCC} and \eqref{NV} holds. Then, the following inequality is satisfied
\begin{equation}\label{H2 estimate}
\mathbb{E} \bigg[ \sup_{t \in [0,T \land \gamma^{(1)}_M \land \tau ]} \dfrac{1}{2}\bigg( \| \partial_{x} A(\rho) \|^2_{L^2} + \| \partial_x u \|^2_{L^2} \bigg) \bigg]+ \mathbb{E}\bigg[ \int_{0}^{T \land \gamma^{(1)}_M \land \tau} \int_{\mathbb{T}} \rho^{\alpha-1} |\partial_{xx} u|^2  dxdt \bigg] < +\infty.
\end{equation}
for a suitable stopping time $\gamma^{(1)}_M$ satisfying $\lim_{M \rightarrow \infty} \gamma^{(1)}_M= \infty \; a.s.$ 
\end{proposition}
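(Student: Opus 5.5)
We work in the augmented variables $(A,u)$ of \eqref{eq A(rho)}--\eqref{u systm} and apply the It\^{o} formula to the functional
\[
\Phi(t)=\tfrac12\|\partial_x A\|_{L^2}^2+\tfrac12\|\partial_x u\|_{L^2}^2 .
\]

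\textbf{Step 1: the It\^{o} identity and the cancellation.} Differentiate \eqref{eq A(rho)} once in $x$ and test with $\partial_x A$; differentiate \eqref{u systm} once in $x$ and test with $\partial_x u$. Equivalently, for the velocity, test \eqref{u systm} with $-\partial_{xx}u$ and integrate by parts. The capillary terms then combine as
\[
-\int \partial_{xx}(\mu_k'\partial_x u)\,\partial_x A\,dx+\int \partial_x(\mu_k'\partial_x A)\,\partial_{xx}u\,dx ,
\]
and after one further integration by parts the top-order parts cancel exactly. This is the skew-adjoint structure of the augmented system. What survives are lower-order commutators such as $\partial_x\mu_k'(\rho)\,\partial_x u\,\partial_{xx}A$, together with the transport terms $\partial_{xx}(Au)\,\partial_x A$ and $A\partial_x A\,\partial_{xx}u$.

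The viscous term $-\int \rho^{-1}\partial_x(\mu\partial_x u)\,\partial_{xx}u$ produces the dissipation $\int\rho^{\alpha-1}|\partial_{xx}u|^2$ plus the error $\int \rho^{-1}\mu'\partial_x\rho\,\partial_x u\,\partial_{xx}u$.

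The noise contributes two pieces. The martingale $\int\partial_x\mathbb{F}\,\partial_x u\,dW$ is bounded through \eqref{F2} by $\sum_k f_k(1+|\partial_x\rho|+|\partial_x u|)$. The It\^{o} correction $\tfrac12\sum_k\|\partial_x F_k\|^2_{L^2}$ is bounded by $1+\|\partial_x\rho\|_{L^2}^2+\|\partial_x u\|_{L^2}^2$.

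\textbf{Step 2: closing the drift estimate.} By \eqref{NV}, \eqref{SCC} and Proposition \ref{prop:vacuum}, $\rho$ and $1/\rho$ are bounded in $L^\infty$, with random bounds that have all moments. Hence $\rho^{\alpha-1}\gtrsim 1$ and $\|\partial_{xx}\rho\|\simeq\|\partial_x A\|$ up to lower-order terms. Each remainder term is handled with Gagliardo--Nirenberg in 1D, namely $\|f\|_{L^\infty}\lesssim\|f\|_{L^2}^{1/2}\|f\|_{H^1}^{1/2}$, followed by Young's inequality. This absorbs every appearance of $\|\partial_{xx}u\|^2$ into the dissipation. The result is a pathwise inequality
\[
d\Phi+c\|\partial_{xx}u\|^2dt\le \eta(t)\,(1+\Phi)\,dt+dM_t ,
\]
where $\eta$ collects quantities already controlled by Propositions \ref{energy prop}, \ref{BD prop} and \ref{prop:vacuum}. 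Concretely, $\eta$ involves $\|\partial_x u\|_{L^2}^2$, $\|\partial_{xx}\rho^{(\alpha+\beta+1)/2}\|_{L^2}^2$ (hence $\|\partial_{xx}\rho\|_{L^2}^2$ in $L^1_t$ once $\rho$ is bounded below), and powers of $\|\rho\|_{L^\infty}$, $\|1/\rho\|_{L^\infty}$ and $\|\partial_x\rho\|_{L^2}$. In short, $\eta\in L^1(0,T)$ almost surely with finite moments. The genuinely dangerous term is $\int \partial_x A\,\partial_x u\,\partial_x A$, of size $\|\partial_xu\|_{L^\infty}\|\partial_xA\|^2$; there we use $\|\partial_x u\|_{L^\infty}\le \|\partial_xu\|^{1/2}\|\partial_{xx}u\|^{1/2}$ and Young's inequality.

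\textbf{Step 3: the stochastic Gronwall lemma.} We apply Lemma 5.3 of \cite{Glatt-Holtz}, defining
\[
\gamma^{(1)}_M=\inf\Big\{t\ge0:\int_0^t\eta\,ds\ge M\Big\}.
\]
Since $\int_0^T\eta<\infty$ almost surely for every $T$, we get $\gamma^{(1)}_M\to\infty$ almost surely. The martingale is controlled through BDG by $\tfrac12\mathbb{E}\sup\Phi$ plus a controlled term, and the lemma then yields \eqref{H2 estimate}.

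\textbf{Main obstacle.} The hardest step will be the bookkeeping in Step 1: identifying the exact skew cancellation between $\partial_{xx}(\mu_k'\partial_xu)$ and $\partial_x(\mu_k'\partial_xA)$, and checking that every leftover term carries at most one factor $\partial_{xx}u$ or $\partial_{xx}A$ multiplied by quantities that can be absorbed. The cubic term $A\partial_xA\,\partial_{xx}u$ needs care. The plan is to bound it by $\|A\|_{L^\infty}\|\partial_xA\|\,\|\partial_{xx}u\|$, with $\|A\|_{L^\infty}^2\lesssim\|A\|\,\|\partial_xA\|$, so that after Young's inequality it contributes $\|A\|^2\|\partial_xA\|^2\cdot\|\partial_xA\|^{\,0}$-type terms, that is, a factor $\|\partial_x A\|^2$ times an integrable weight.
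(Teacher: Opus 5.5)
Your proposal is correct and follows essentially the same route as the paper. The shared steps are: the It\^{o} formula for $\tfrac12(\|\partial_x A\|_{L^2}^2+\|\partial_x u\|_{L^2}^2)$; the skew-adjoint cancellation of the top-order capillary terms; absorption of every $\partial_{xx}u$ factor into the weighted dissipation, using the two-sided density bounds and the BD control of $\partial_{xx}\rho^{(\alpha+\beta+1)/2}$ in $L^2_t$; a localizing stopping time built from the time integral of the Gronwall coefficient; BDG for the martingale; and the Glatt-Holtz--Ziane stochastic Gronwall lemma.

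The remaining issues are cosmetic. The second capillary integral in Step 1 needs a minus sign for the top-order cancellation to occur. The cubic term $A\partial_xA\,\partial_{xx}u$ actually produces the weight $\|A\|_{L^2}\|\partial_x A\|_{L^2}$ in front of $\|\partial_x A\|_{L^2}^2$, which is time-integrable by the energy and BD bounds. Your chain-rule bound $\sum_k f_k(1+|\partial_x\rho|+|\partial_x u|)$ on the noise terms is in fact more careful than the paper's.
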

\begin{proof}
We test \eqref{eq A(rho)} with $-\partial_{xx} A$ and after integrating by parts we deduce
\begin{equation}\label{dt A H1}
\dfrac{1}{2} \dfrac{d}{dt} \| \partial_x A\|^2_{L^2}= \int_{\mathbb{T}}\partial_x ( A u)\partial_{xx} Adx+ \int_{\mathbb{T}} \partial_x (\mu'_k(\rho) \partial_x u)\partial_{xx}Adx
\end{equation}
and similarly, we apply $\partial_x$ in the momentum equation \eqref{u systm} and then we use It\^{o} formula to the functional 
\begin{equation*}
F(u)= \dfrac{1}{2} \int_{\mathbb{T}} | \partial_x u |^2 dx
\end{equation*}
to deduce
\begin{equation}\label{dt u H1}
\begin{split}
& d\int_{\mathbb{T}} \dfrac{| \partial_x u|^2}{2} dx + \int_{\mathbb{T}}\partial_x \big[u\partial_x u+ \dfrac{\partial_x p(\rho)}{\rho}\big] \partial_x u \, dx dt= \int_{\mathbb{T}} \partial_x \bigg[\dfrac{\partial_x (\mu(\rho)\partial_x u)}{\rho} \bigg] \partial_x u dxdt \\ & + \int_{\mathbb{T}} \partial_x \bigg[ \partial_x (\mu'_k(\rho) \partial_x A) + A \partial_x A\bigg] \partial_x u \, dxdt + \int_{\mathbb{T}} \partial_x \mathbb{F}(\rho,u) \partial_x u \, dxdW \\ & + \dfrac{1}{2} \sum_{k=1}^{\infty} \int_{\mathbb{T}} | \partial_x F_k |^2 \, dxdt.
\end{split}
\end{equation}
We sum up \eqref{dt A H1} and \eqref{dt u H1} to deduce 
\begin{equation}\label{Hs equality intro}
\begin{split}
& \dfrac{1}{2} d \bigg( \| \partial_x A(\rho) \|^2_{L^2} + \| \partial_x u \|^2_{L^2}  \bigg) +  \int_{\mathbb{T}} \rho^{\alpha-1} | \partial_{xx} u |^2 dxdt = \int_{\mathbb{T}}  \partial_x ( A(\rho) u )\partial_{xx} A(\rho)dxdt \\ & -  \int_{\mathbb{T}} u \partial_x u \partial_{xx} u dxdt  -\int_{\mathbb{T}} \dfrac{\partial_x p(\rho) }{\rho} \partial_{xx} u dxdt - \int_{\mathbb{T}} \dfrac{\partial_x \mu(\rho) \partial_x u}{\rho} \partial_{xx} u dxdt \\ &  -\int_{\mathbb{T}} A \partial_x A \partial_{xx} u dxdt  + \int_{\mathbb{T}} \partial_x ( \mu'_k(\rho)\partial_x u) \partial_{xx} Adxdt - \int_{\mathbb{T}} \partial_x (\mu'_k(\rho) \partial_x A) \partial_{xx} u dxdt \\ & + \int_{\mathbb{T}} \partial_x \mathbb{F}(\rho,u) \partial_x u \, dxdW+ \dfrac{1}{2} \sum_{k=1}^{\infty} \int_{\mathbb{T}} | \partial_x F_k |^2 \, dxdt.
\end{split}
\end{equation}
Before starting the estimates of the right-hand-side of \eqref{Hs equality intro}, we note that thanks to the introduction of the new variable $A(\rho)$ an important cancellation between high order derivative terms occurs.  Specifically we have 
\begin{equation}\label{sum with cancellation}
\begin{split}
&  \int_{\mathbb{T}} \partial_x ( \mu'_k(\rho)\partial_x u) \partial_{xx} Adx - \int_{\mathbb{T}} \partial_x (\mu'_k(\rho) \partial_x A) \partial_{xx} u dx \\ & =  \int_{\mathbb{T}} \partial_x \mu'_k(\rho) \partial_x u \partial_{xx} Adx- \int_{\mathbb{T}} \partial_x \mu'_k(\rho) \partial_x A \partial_{xx} udx.
\end{split}
\end{equation}
Thus we end up with the following equality 
\begin{equation}\label{Hs equality final}
\begin{split}
& \dfrac{1}{2} d \bigg( \| \partial_x A(\rho) \|^2_{L^2} + \| \partial_x u \|^2_{L^2}  \bigg) +  \int_{\mathbb{T}} \rho^{\alpha-1} | \partial_{xx} u |^2 dxdt =  \int_{\mathbb{T}}  \partial_x ( A(\rho) u )\partial_{xx} A(\rho)dxdt \\ & -  \int_{\mathbb{T}} u \partial_x u \partial_{xx} u dxdt  -\int_{\mathbb{T}} \dfrac{\partial_x p(\rho) }{\rho} \partial_{xx} u dxdt - \int_{\mathbb{T}} \dfrac{\partial_x \mu(\rho) \partial_x u}{\rho} \partial_{xx} u dxdt \\ & -\int_{\mathbb{T}} A \partial_x A \partial_{xx} u dxdt + \int_{\mathbb{T}} \partial_x \mu'_k(\rho) \partial_x u \partial_{xx} A dxdt  - \int_{\mathbb{T}} \partial_x \mu'_k(\rho) \partial_x A \partial_{xx} udxdt  \\ & + \int_{\mathbb{T}} \partial_x \mathbb{F}(\rho,u) \partial_x u \, dxdW+ \dfrac{1}{2} \sum_{k=1}^{\infty} \int_{\mathbb{T}} | \partial_x F_k |^2 \, dxdt = \sum_{i=1}^{9} I_i.
\end{split}
\end{equation}
Now we start estimating the integrals in the right hand side of \eqref{Hs equality final}. In particular, we will make an extensive use of H\"{o}lder, Sobolev, Poincarè and Young inequalities. The explicit dependence on positive constants depending on $\alpha$ and $\beta$ is omitted. 
\begin{equation}\label{I1 s=1}
\begin{split}
|I_1|& \le \int_{\mathbb{T}} | \partial_x A|^2 |\partial_x u| dx +\int_{\mathbb{T}} |A| | \partial_x A| | \partial_{xx}u| dx \\ & \le \| \partial_x A \|^2_{L^2} \| \partial_x u \|_{L^\infty} + \| A \|_{L^\infty} \| \frac{1}{\rho^{\frac{\alpha-1}{2}}}\|_{L^\infty} \| \partial_x A \|_{L^2} \| \rho^{\frac{\alpha-1}{2}} \partial_{xx} u \|_{L^2} \\ & \le \| \partial_x A(\rho) \|_{L^2} \| \partial_{xx} \rho^{\frac{\alpha+\beta+1}{2}} \|_{L^2} \| \partial_{xx} u \|_{L^2} \| \rho^{-\frac{\alpha}{2}} \|_{L^\infty}  \\ & + \| \partial_x A(\rho) \|^2_{L^2} \| \frac{1}{\rho^{\frac{\alpha-1}{2}}}\|_{L^\infty} \| \rho^{\frac{\alpha-1}{2}} \partial_{xx} u \|_{L^2} \\ & \le \delta \| \rho^{\frac{\alpha-1}{2}} \partial_{xx} u \|^2_{L^2}+ C(\delta) \| \partial_x A(\rho) \|^2_{L^2} \| \partial_{xx} \rho^{\frac{\alpha+\beta+1}{2}} \|^2_{L^2} \| \rho^{\frac{-2 \alpha+1}{2}} \|^2_{L^\infty},
\end{split}
\end{equation}
where we have also used that up to a positive multiplicative constant 
\begin{equation*}
|\partial_x A| \le \rho^{-\frac{\alpha}{2}} |\partial_{xx} \rho^{\frac{\alpha+\beta+1}{2}} |+\rho^{-\frac{\alpha}{2}}| \partial_x \rho^{\frac{\alpha+\beta+1}{4}}|^2
\end{equation*}
and therefore by using also Lemma \ref{Functional ineq } with $f=\rho^{\frac{\alpha+\beta+1}{2}},$ we have 
\begin{equation*}
\| \partial_x A \|_{L^2} \le \| \rho^{-\frac{\alpha}{2}} \|_{L^\infty} \| \rho^{\frac{\alpha}{2}} \partial_x A \|_{L^2}\le \| \rho^{-\frac{\alpha}{2}} \|_{L^\infty} \| \partial_{xx} \rho^{\frac{\alpha+\beta+1}{2}} \|_{L^2}.
\end{equation*}
With similar lines of argument we deduce 
\begin{equation}
\begin{split}
|I_2| & \le \| u \|_{L^\infty} \| \partial_x u \|_{L^2} \| \rho^{\frac{\alpha-1}{2}}\partial_{xx} u \|_{L^2} \| \frac{1}{\rho^{\frac{\alpha-1}{2}}}\|_{L^\infty}  \\ & \le C(\delta) \| u \|^2_{L^\infty} \| \partial_x u \|^2_{L^2}\| \frac{1}{\rho^{\frac{\alpha-1}{2}}}\|^2_{L^\infty}+ \delta \| \rho^{\frac{\alpha-1}{2}}\partial_{xx} u \|^2_{L^2} \\ & \le C(\delta) \| \partial_x u \|^2_{L^2} \| \frac{1}{\rho^{\frac{\alpha-1}{2}}} \|^2_{L^\infty}( \| u \|^2_{L^2}+ \| \partial_x u \|^2_{L^2})+ \delta \| \rho^{\frac{\alpha-1}{2}}\partial_{xx} u \|^2_{L^2},
\end{split}
\end{equation}
\\
\begin{equation}
\begin{split}
|I_3| &  \le c \| \rho^{\frac{2\gamma-\alpha-\beta-2}{2}} \|_{L^\infty} \| \partial_x A \|_{L^2} \| \rho^{\frac{\alpha-1}{2}}\partial_{xx} u \|_{L^2} \\ & \le C(\delta) \| \rho^{\frac{2\gamma-\alpha-\beta-2}{2}} \|^2_{L^\infty}  \| \partial_x A \|^2_{L^2}+\delta \| \rho^{\frac{\alpha-1}{2}}\partial_{xx} u \|^2_{L^2},
\end{split}
\end{equation}
while the viscous term is handled as follows
\begin{equation}
\begin{split}
| I_4| & \le C \| \rho^{\frac{-\beta-2}{2}} \|_{L^\infty} \| \partial_x \rho^{\frac{\alpha+\beta+1}{2}} \|_{L^\infty} \| \partial_x u \|_{L^2} \| \rho^{\frac{\alpha-1}{2}}\partial_{xx} u \|_{L^2} \\ & \le C(\delta) \| \rho^{\frac{-\beta-2}{2}} \|^2_{L^\infty} \| \partial_{xx} \rho^{\frac{\alpha+\beta+1}{2}} \|^2_{L^2} \| \partial_x u \|^2_{L^2} + \delta \| \rho^{\frac{\alpha-1}{2}}\partial_{xx} u \|^2_{L^2}.
\end{split}
\end{equation}
Finally, again with an extensive use of H\"{o}lder, Young and Sobolev inequalities, the capillarity terms give the following contributions
\begin{equation}
\begin{split}
| I_5 | & \le \delta \| \rho^{\frac{\alpha-1}{2}} \partial_{xx} u \|^2_{L^2}+ C(\delta) \| \partial_x A(\rho) \|^2_{L^2} \| \partial_{xx} \rho^{\frac{\alpha+\beta+1}{2}} \|^2_{L^2} \| \rho^{\frac{-2 \alpha+1}{2}} \|^2_{L^\infty},
\end{split}
\end{equation}
while similarly to the estimate of $I_1,$ since $\partial_x \mu'_k(\rho)= \big( \frac{\beta+1}{2} \big) A$ we have  
\begin{equation}
\begin{split}
| I_6 | & = \bigg| \bigg(\frac{\beta+1}{2}\bigg) \int_{\mathbb{T}} \partial_x A \partial_x u \partial_{xx} A dx \bigg|  \\ & \le  \delta \| \rho^{\frac{\alpha-1}{2}} \partial_{xx} u \|^2_{L^2}+C(\delta)\| \partial_x A(\rho) \|^2_{L^2} \| \partial_{xx} \rho^{\frac{\alpha+\beta+1}{2}} \|^2_{L^2} \| \rho^{\frac{-2\alpha+1}{2}} \|^2_{L^\infty}
\end{split}
\end{equation}
and 
\begin{equation}
\begin{split}
| I_7 | & = \bigg| \bigg(\frac{\beta+1}{2}\bigg) \int_{\mathbb{T}} \partial_x A \partial_x A \partial_{xx} u dx \bigg|  \\ & \le  \delta \| \rho^{\frac{\alpha-1}{2}} \partial_{xx} u \|^2_{L^2}+C(\delta)\| \partial_x A(\rho) \|^2_{L^2} \| \partial_{xx} \rho^{\frac{\alpha+\beta+1}{2}} \|^2_{L^2} \| \rho^{\frac{-2\alpha+1}{2}} \|^2_{L^\infty}
\end{split}
\end{equation}
Furthermore, we use \eqref{F1}-\eqref{F2} to handle the It\^{o} correction terms as follows
\begin{equation}\label{I9 s=1}
|I_9| = \dfrac{1}{2} \sum_{k=1}^{\infty} \int_{\mathbb{T}} | \partial_x F_k |^2 dx \lesssim \sum_{k=1}^{\infty} f^2_k \le C.
\end{equation}
Collecting \eqref{I1 s=1}-\eqref{I9 s=1} together and choosing properly the constant $\delta$ we have 
\begin{equation}
\begin{split}
& \dfrac{1}{2} d \bigg( \| \partial_x A(\rho) \|^2_{L^2} + \| \partial_x u \|^2_{L^2}  \bigg) +  c\int_{\mathbb{T}} \rho^{\alpha-1} | \partial_{xx} u |^2 dx \\ & \le a(t) \bigg( \| \partial_x A(\rho) \|^2_{L^2} + \| \partial_x u \|^2_{L^2}  \bigg)+b(t)+ |I_8|,
\end{split}
\end{equation}
for a constant $c>0$ and $a(t), \; b(t) \in L^p(\Omega;L^1(0,T)).$  for all $p \in [1, \infty).$
Specifically we have 
\begin{equation*}
\begin{split}
& a(t)  = \big( \| \rho^{\frac{-\beta-2}{2}} \|^2_{L^2} + \| \rho^{\frac{-2\alpha+1}{2}} \|^2_{L^\infty} \big)\| \partial_{xx} \rho^{\frac{\alpha+\beta+1}{2}} \|^2_{L^2} + \| \frac{1}{\rho^{\frac{\alpha-1}{2}}} \|^2_{L^\infty} \big( \| \partial_x u \|^2_{L^2} + \| u \|^2_{L^2} \big) + \| \rho^{\frac{2\gamma-\alpha-\beta-2}{2}} \|^2_{L^\infty}\\ &
b(t)=C.
\end{split}
\end{equation*}
Note that the estimate of the term $|I_8|$ involves a stochastic integral which we handle in expectation. To this purpose we first perform a localization argument by introducing the following stopping time
\begin{equation}\label{gamma1}
\gamma^{(1)}_M= \inf_{ t \ge 0} \bigg\{ \int_{0}^{t} a(s)ds > M \bigg\}, \quad \inf \emptyset= +\infty,
\end{equation}
 so that we can apply a stochastic version of the Gronwall lemma. In particular,  we have that 
\begin{equation}
\int_{0}^{\gamma^{(1)}_M} a(t) dt \le M, \quad a.s.
\end{equation}
and given a pair of stopping times $\tau_a, \; \tau_b$ such that $0 \le \tau_a < \tau_b \le \gamma^{(1)}_M \land T$ we integrate in $[\tau_a, t]$ and taking the sup in $[\tau_a, \tau_b]$ and the expectation we deduce
\begin{equation}
\begin{split}
& \mathbb{E} \bigg[ \sup_{\tau_a \le t \le \tau_b} \dfrac{1}{2}  \bigg( \| \partial_x A(\rho) \|^2_{L^2} + \| \partial_x u \|^2_{L^2} \bigg) \bigg] +  c \mathbb{E} \bigg[ \int_{\tau_a}^{\tau_b}\int_{\mathbb{T}} \rho^{\alpha-1} | \partial_{xx} u |^2 dxdt \bigg] \\ & \le \mathbb{E} \bigg[ \| \partial_x A(\rho) (\tau_a) \|^2_{L^2} + \| \partial_x u (\tau_a) \|^2_{L^2} \bigg] +c \mathbb{E} \bigg[ \int_{\tau_a}^{\tau_b} a(t) \bigg( \| \partial_x A(\rho) \|^2_{L^2} + \| \partial_x u \|^2_{L^2}  \bigg) dt \bigg] \\ &  + \mathbb{E} \int_{\tau_a}^{\tau_b} b(t) dt +  \mathbb{E} \sup_{\tau_a \le t \le \tau_b} \big| \int_{\tau_a}^{s} I_8 \big|.
\end{split}
\end{equation}
Now we observe that again by virtue of \eqref{F1}-\eqref{F2} and the Burkholder-Davis-Gundy inequality we have
\begin{equation}\label{est stoch int H2}
\begin{split}
& \mathbb{E} \bigg[ \sup_{\tau_a \le t \le \tau_b} \bigg| \int_{\tau_a}^{s} \int_{\mathbb{T}} \partial_x \mathbb{F}(\rho,u) \partial_x u dxdW \bigg| \bigg] \lesssim \mathbb{E} \bigg[ \int_{\tau_a}^{\tau_b} \sum_{k=1}^{\infty} \bigg| \int_{\mathbb{T}} \partial_x F_k \partial_x u dx\bigg|^2 dt \bigg]^{\frac{1}{2}} \\ & \lesssim  \mathbb{E} \bigg[ \int_{\tau_a}^{\tau_b} \sum_{k=1}^{\infty} f^2_k \| \partial_x u \|^2_{L^2} dt \bigg]^{\frac{1}{2}} \lesssim \mathbb{E} \bigg[ \int_{\tau_a}^{\tau_b} \| \partial_x u \|^2_{L^2} dt \bigg]^{\frac{1}{2}} \le \mathbb{E} \bigg[ 1+ \int_{\tau_a}^{\tau_b} \| \partial_x u \|^2_{L^2} dt \bigg].
\end{split}
\end{equation}
Thus, by virtue of \eqref{est stoch int H2} we deduce that 
\begin{equation*}
\begin{split}
& \mathbb{E} \bigg[ \sup_{\tau_a \le t \le \tau_b} \dfrac{1}{2}  \bigg( \| \partial_x A(\rho) \|^2_{L^2} + \| \partial_x \tilde{u} \|^2_{L^2} \bigg) \bigg] +  c \mathbb{E} \bigg[ \int_{\tau_a}^{\tau_b}\int_{\mathbb{T}} \rho^{\alpha-1} | \partial_{xx} \tilde{u} |^2 dxdt \bigg] \\ & \le \mathbb{E} \bigg[ \| \partial_x A(\rho) (\tau_a) \|^2_{L^2} + \| \partial_x u (\tau_a) \|^2_{L^2} \bigg] +c \mathbb{E} \bigg[ \int_{\tau_a}^{\tau_b} a(t) \bigg( \| \partial_x A(\rho) \|^2_{L^2} + \| \partial_x u \|^2_{L^2}  \bigg) dt \bigg] \\ &  + \mathbb{E} \int_{\tau_a}^{\tau_b} b(t) dt,
\end{split}
\end{equation*}
from which we apply the stochastic Gronwall lemma,  Lemma 5.3 in \cite{Glatt-Holtz} to infer 
\begin{equation*}
\begin{split}
& \mathbb{E} \bigg[ \sup_{0 \le t \le T \land \gamma^{(1)}_M \land \tau } \dfrac{1}{2}  \bigg( \| \partial_x A(\rho) \|^2_{L^2} + \| \partial_x u \|^2_{L^2} \bigg) \bigg] +  c \mathbb{E} \bigg[ \int_{0}^{T \land \gamma^{(1)}_M \land \tau}\int_{\mathbb{T}} \rho^{\alpha-1} | \partial_{xx} u |^2 dxdt \bigg] < + \infty.
\end{split}
\end{equation*}
The next step is to prove that $\lim_{M \rightarrow \infty} \gamma^{(1)}_M= \infty \; a.s.$ We first observe that by Markov inequality 
\begin{equation*}
\mathbb{P} \bigg( \gamma^{(1)}_M \le T \bigg) \le \mathbb{P} \bigg( \int_{0}^{T} a(t)dt \ge M \bigg) \le \dfrac{1}{M} \mathbb{E} \bigg( \int_{0}^{T} a(t) dt \bigg),
\end{equation*}
and by virtue of Proposition \ref{energy prop}, Proposition \ref{BD prop} and Proposition \ref{prop:vacuum} we have 
\begin{equation}\label{markov gamma1}
\mathbb{E} \bigg( \int_{0}^{T} a(t) dt \bigg) \le C. 
\end{equation}
Therefore our claim follows by sending $ M \rightarrow \infty$ in \eqref{markov gamma1}. This concludes the proof of Proposition \ref{Prop h2 est}.
\end{proof}
\noindent
We conclude this Section with the following $H^{s+1}(\mathbb{T}) \times H^{s}(\mathbb{T})$ estimate.  Note that the exponent $s$ is chosen in such a way that $s > \frac{n}{2}+2= \frac{5}{2},$ which is enough to control the norms of the blow-up condition \eqref{limit cond}.
\begin{proposition}\label{Prop global s+1 s}
Under the same assumptions of Proposition \ref{Prop h2 est}, the following inequality holds
\begin{equation}\label{p=1 global hs}
\begin{split}
& \mathbb{E} \bigg[ \sup_{t \in [0, T \land \gamma^{(3)}_M \land \tau] } \dfrac{1}{2}\bigg( \| A(\rho) \|^2_{H^3}+ \| u \|^2_{H^3} \bigg)  \bigg] \\ & + \mathbb{E} \bigg[ \int_{0}^{T \land \gamma^{(3)}_M \land \tau} \int_{\mathbb{T}} \rho^{\alpha-1} (|\partial_x u |^2+|\partial_{xx} u |^2+|\partial^3_x u |^2+|\partial^4_x u |^2) dt  \bigg] < \infty.
\end{split}
\end{equation}
for a suitable stopping time $\gamma^{(3)}_M$ satisfying $\lim_{M \to \infty} \gamma^{(3)}_M= \infty$ a.s..
\end{proposition}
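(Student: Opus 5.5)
We would prove Proposition \ref{Prop global s+1 s} by iterating the argument of Proposition \ref{Prop h2 est} one derivative at a time, for $j=2,3$. At each level we work with the augmented system \eqref{eq A(rho)}--\eqref{u systm}. We apply $\partial_x^j$ to \eqref{eq A(rho)} and test with $\partial_x^j A$. We apply $\partial_x^j$ to \eqref{u systm} and use the It\^{o} formula for $\frac12\|\partial_x^j u\|_{L^2}^2$. Summing the two identities, the top-order capillarity terms $\int \partial_x^{j+1}(\mu_k'(\rho)\partial_x u)\,\partial_x^{j}A$ and $\int \partial_x^{j}(\mu_k'(\rho)\partial_x A)\,\partial_x^{j+1}u$ cancel exactly as in \eqref{sum with cancellation}, thanks to the skew-adjoint structure. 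What remains are commutators in which at least one derivative falls on $\mu_k'(\rho)$. Since $\partial_x\mu_k'(\rho)=\frac{\beta+1}{2}A$, these are products of lower-order derivatives of $A$ with derivatives of $u$ of order at most $j+1$. The viscous term gives the dissipation $\int\rho^{\alpha-1}|\partial_x^{j+1}u|^2$ plus commutators $\partial_x^{l}(\rho^{-1}\partial_x\mu(\rho))\,\partial_x^{j-l}\partial_x u$.

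At level $j$ we estimate every remaining term by the Moser/Kato--Ponce calculus in one dimension, together with H\"older, Gagliardo--Nirenberg and Young. Each term is put in the form
\begin{equation*}
\delta\|\rho^{\frac{\alpha-1}{2}}\partial_x^{j+1}u\|_{L^2}^2+a_j(t)\big(\|\partial_x^jA\|_{L^2}^2+\|\partial_x^ju\|_{L^2}^2\big)+b_j(t).
\end{equation*}
The coefficient $a_j$ is built from three ingredients: the norms $\|\partial_x^{l}A\|_{L^2}$ and $\|\partial_x^{l}u\|_{L^2}$ with $l<j$, already controlled at the previous level; the dissipation $\int\rho^{\alpha-1}|\partial_x^{l+1}u|^2$ from the previous level, which is time integrable; and powers of $\|\rho\|_{L^\infty}$ and $\|\rho^{-1}\|_{L^\infty}$, controlled by Proposition \ref{prop:vacuum}. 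The key point is that $\|\partial_x A\|_{L^\infty}$, and at level $3$ also $\|\partial_{xx}u\|_{L^\infty}$, enter only linearly in front of a top-order quantity. They can therefore be bounded by $\|\partial_x^2A\|_{L^2}$ (respectively by the previous-level dissipation), which is either part of the Gronwall quantity or time integrable.

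Since $\rho$ is bounded above and below, Sobolev norms of $\rho$ and of $A$ are equivalent up to polynomial factors in these bounds. The norm $\|\rho\|_{H^{j+1}}$, which is needed for compositions such as $\partial_x^l(\rho^{\alpha-1})$ and $\partial_x^l(p(\rho)/\rho)$, is therefore controlled by $\|A\|_{H^j}$. The It\^{o} correction is bounded using \eqref{F2} and the chain rule, $\sum_k\|\partial_x^jF_k\|_{L^2}^2\lesssim 1+\|u\|_{H^j}^2+\|\rho\|_{H^j}^2$. The stochastic integral is handled by Burkholder--Davis--Gundy exactly as in \eqref{est stoch int H2}. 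We then define
\begin{equation*}
\gamma^{(j)}_M=\gamma^{(j-1)}_M\wedge\inf\Big\{t\ge0:\int_0^t a_j(s)\,ds>M\Big\}
\end{equation*}
and apply the stochastic Gronwall lemma (Lemma 5.3 in \cite{Glatt-Holtz}) on stopping-time intervals $[\tau_a,\tau_b]$. This yields the estimate for $j=2$, and then for $j=3$. Adding the $L^2$, $H^1$ and $H^2$ levels gives the full $H^3$ norms and the full dissipation in \eqref{p=1 global hs}.

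The main obstacle is proving $\gamma^{(3)}_M\to\infty$ a.s. The previous-level bounds hold only in expectation and only up to $\gamma^{(j-1)}_M$, so $a_j$ is not known to lie in $L^1(\Omega;L^1(0,T))$ globally. We would argue as follows. On $\{\gamma^{(j-1)}_M>T\}$ the integral $\int_0^T a_j\,dt$ is bounded in expectation by a constant $C(M)$, by the previous step and H\"older in time. Hence
\begin{equation*}
\mathbb{P}(\gamma^{(j)}_{M'}\le T)\le\mathbb{P}(\gamma^{(j-1)}_M\le T)+\frac{C(M)}{M'}.
\end{equation*}
Letting first $M'\to\infty$ and then $M\to\infty$ gives the claim, using a diagonal choice of $M'(M)$. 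A secondary difficulty is keeping the nonlinear $\rho$-weights compatible with \eqref{SCC}--\eqref{NV}. For this we would systematically use Proposition \ref{prop:vacuum}(3) to absorb all negative powers of $\rho$ into $L^\infty$ factors with finite moments.
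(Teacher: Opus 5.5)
Your proposal follows the paper's proof essentially step for step: $\partial_x^j$-energy estimates for $j=2,3$ on the augmented system \eqref{eq A(rho)}--\eqref{u systm}, with the skew-adjoint cancellation of the top-order capillarity terms, Burkholder--Davis--Gundy for the stochastic integral, nested localizing stopping times $\gamma^{(j)}_M$, and the stochastic Gronwall lemma of \cite{Glatt-Holtz}. Your two-parameter argument for $\gamma^{(j)}_M\to\infty$ is more careful than the paper's \eqref{expectation a2}, which claims $\mathbb{E}\int_0^{T\wedge\tau}a^{(2)}\,dt<\infty$ even though Proposition \ref{Prop h2 est} only controls the level-one quantities up to $\gamma^{(1)}_M$; you could also replace your Markov bound $C(M)/M'$ by the a.s.\ finiteness of $\int_0^{T\wedge\gamma^{(j-1)}_M\wedge\tau}a_j\,dt$, which removes the need for moments of products of first-moment quantities with the $\rho^{\pm1}$ weights.
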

\begin{proof}
As a first step of the proof, we deduce an $H^3(\mathbb{T})\times H^2(\mathbb{T})$ estimates. In particular, our goal is to prove that 
\begin{equation}\label{eq:s2}
\begin{split}
& \partial_{xx} A(\rho) \in L^1(\Omega;L^\infty(0,T  \land \tau;L^2(\mathbb{T}))),\quad \partial_{xx} u \in L^1(\Omega;L^\infty(0,T \land \tau;L^2(\mathbb{T}))), \\ & 
\rho^{\frac{\alpha-1}{2}}\partial^3_{x} u \in L^1(\Omega;L^2(0,T \land \tau;L^2(\mathbb{T}))).
\end{split}
\end{equation}
The proof mimic the same lines of arguments with respect to the proof of Proposition \ref{Prop h2 est}. In particular, we apply $\partial_{xx}$ to \eqref{eq A(rho)} and we multiply it by $\partial_{xx} A(\rho).$ Then, after integrating by parts we have 
\begin{equation}\label{ito rho s=2}
\dfrac{d}{dt} \int_{\mathbb{T}} \frac{ | \partial_{xx} A|^2}{2} dx+ \int_{\mathbb{T}} \partial^3_{x}( A u)\partial_{xx} A dx+ \int_{\mathbb{T}}\partial^3_x( \mu'_k(\rho) \partial_x u) \partial_{xx} A dx =0.
\end{equation}
Then we apply $\partial_{xx}$ to  \eqref{u systm}  and by using It\^{o} formula to the functional $F(u)=\frac{1}{2}\int_{\mathbb{T}} |\partial_{xx} u|^2$
we sum up with \eqref{ito rho s=2} and integrate by parts to deduce
\begin{equation}\label{sum with cancellation s=2}
\begin{split}
& \dfrac{1}{2} d \bigg( \| \partial_{xx} A \|^2_{L^2} + \| \partial_{xx} u \|^2_{L^2}  \bigg) +  \int_{\mathbb{T}} \rho^{\alpha-1} | \partial^3_{x} u |^2 dxdt = -  \int_{\mathbb{T}}  \partial^3_x ( A u )\partial_{xx} Adxdt \\ & - \int_{\mathbb{T}} \partial^3_x( \mu'_k(\rho) \partial_x u ) \partial_{xx} A(\rho) dxdt -\int_{\mathbb{T}} \partial_{xx}(u \partial_x u) \partial_{xx} u dxdt  -\int_{\mathbb{T}} \partial_{xx} \bigg( \dfrac{\partial_x p(\rho) }{\rho} \bigg) \partial_{xx} u dxdt \\ & + \int_{\mathbb{T}} \partial_{xx} \bigg( \dfrac{\partial_x \mu(\rho) \partial_x u}{\rho} \bigg) \partial_{xx} u dxdt - \int_{\mathbb{T}} | \partial_{xx} u |^2 \partial_x \bigg( \frac{\mu(\rho)}{\rho} \bigg)dxdt  + \int_{\mathbb{T}} \partial^3_x ( \mu'_k(\rho) \partial_x A) \partial_{xx} u dxdt  \\ & + \int_{\mathbb{T}} \partial_{xx} (A \partial_x A ) \partial_{xx} u dxdt + \int_{\mathbb{T}} \partial_{xx} \mathbb{F}(\rho,u) \partial_{xx} u \, dxdW + \dfrac{1}{2} \sum_{k=1}^{\infty} \int_{\mathbb{T}} | \partial_{xx}F_k |^2 \, dxdt.
\end{split}
\end{equation}
Now, similarly to the $H^2(\mathbb{T}) \times H^1(\mathbb{T})$ estimate, we highlight the following cancellation
\begin{equation}
\begin{split}
& -\int_{\mathbb{T}} \partial^3_x( \mu'_k(\rho) \partial_x u ) \partial_{xx} A(\rho) dx+ \int_{\mathbb{T}} \partial^3_x ( \mu'_k(\rho) \partial_x A) \partial_{xx} u dx \\ & = \int_{\mathbb{T}} \bigg( \frac{\beta+1}{2} \bigg) \partial_x A \partial_x u \partial^3_x A dx+ \int_{\mathbb{T}} (\beta+1) A \partial_{xx} u \partial^3_x A dx \\ & -\int_{\mathbb{T}} \bigg( \frac{\beta+1}{2} \bigg) |\partial_x A|^2  \partial^3_x u dx- \int_{\mathbb{T}} (\beta+1) A \partial_{xx} A \partial^3_x u dx \\ & = -\int_{\mathbb{T}} \bigg( \frac{\beta+1}{2} \bigg) |\partial_{xx} A|^2 \partial_x u dx- \int_{\mathbb{T}} \bigg(\frac{\beta+1}{2}\bigg) \partial_x A \partial_{xx} A \partial_{xx} u dx -\int_{\mathbb{T}} (2\beta+2) A \partial_{xx} A \partial^3_x u dx,
\end{split}
\end{equation}
thus \eqref{sum with cancellation s=2} reduces to 
\begin{equation}\label{s=2}
\begin{split}
& \dfrac{1}{2} d \bigg( \| \partial_{xx} A(\rho) \|^2_{L^2} + \| \partial_{xx} u \|^2_{L^2}  \bigg) +  \int_{\mathbb{T}} \rho^{\alpha-1} | \partial^3_{x} u |^2 dxdt = -  \int_{\mathbb{T}}  \partial^3_x ( A(\rho) u )\partial_{xx} A(\rho)dxdt \\ &  -\int_{\mathbb{T}} \partial_{xx}(u \partial_x u) \partial_{xx} u dxdt  -\int_{\mathbb{T}} \partial_{xx} \bigg( \dfrac{\partial_x p(\rho) }{\rho} \bigg) \partial_{xx} u dxdt + \int_{\mathbb{T}} \partial_{xx} \bigg( \dfrac{\partial_x \mu(\rho) \partial_x u}{\rho} \bigg) \partial_{xx} u dxdt \\ & - \int_{\mathbb{T}} \partial_x \bigg( \frac{\mu(\rho)}{\rho} \bigg) \partial_{xx} u \partial^3_x u dxdt -\int_{\mathbb{T}} \bigg( \frac{\beta+1}{2} \bigg) |\partial_{xx} A|^2 \partial_x u dxdt- \int_{\mathbb{T}} \bigg(\frac{\beta+1}{2}\bigg) \partial_x A \partial_{xx} A \partial_{xx} u dxdt \\ & -\int_{\mathbb{T}} (2\beta+2) A \partial_{xx} A \partial^3_x u dxdt + \int_{\mathbb{T}} \partial_{xx} (A \partial_x A ) \partial_{xx} u dxdt+ \int_{\mathbb{T}} \partial_{xx} \mathbb{F}(\rho,u) \partial_{xx} u \, dxdW \\ & + \dfrac{1}{2} \sum_{k=1}^{\infty} \int_{\mathbb{T}} | \partial_{xx}F_k |^2 \, dxdt = \sum_{i=1}^{11} I_i.
\end{split}
\end{equation}
We start estimating the right hand side of \eqref{s=2}.  In order to avoid heavy notations,  we make use of a general density exponent $m_1=m_1(\alpha,\beta,\gamma),$ which can vary from lines to lines and can be computed explicitly. This is not restrictive since both $\rho, \frac{1}{\rho}$ are controlled in $L^\infty_{t,x}.$ 
\begin{equation}\label{I_1 s=2}
\begin{split}
|I_1| & \le \bigg| \int_{\mathbb{T}} | \partial_{xx} A|^2 \partial_x u + \partial_x A \partial_{xx} u \partial_{xx} A + A \partial^3_x u \partial_{xx} A dx \bigg| \\ & \le \| \partial_{xx} A \|^2_{L^2} ( \| \rho^{\frac{\alpha-1}{2}} \partial_{xx} u \|^2_{L^2} + \| \rho^{m_1}\|^2_{L^\infty} +C(\delta) \| A \|^2_{L^\infty} \| \rho^{m_1} \|^2_{L^\infty} ) + \delta \| \rho^{\frac{\alpha-1}{2}} \partial^3_x u \|^2_{L^2}
\end{split}
\end{equation}
\begin{equation}
\begin{split}
|I_2| &  \le \| \partial_{xx} u \|^2_{L^2} \big( \| \rho^{\frac{\alpha-1}{2}} \partial_{xx} u \|^2_{L^2} + \| \rho^{m_1}\|^2_{L^\infty} + C(\delta) \|u\|^2_{L^\infty} \| \rho^{m_1} \|^2_{L^\infty} \big) + \delta \| \rho^{\frac{\alpha-1}{2}} \partial^3_x u \|^2_{L^2}
\end{split}
\end{equation}
and again by using H\"{o}lder, Sobolev and Young inequalities 
\begin{equation}
| I_3| \le \| \partial_{xx} u \|^2_{L^2}+ \| \partial_x A \|^2_{L^2} \| \rho^{m_1} \|^2_{L^\infty}+ \| \partial_{xx} u \|^2_{L^2} + \| \partial_{xx} A \|^2_{L^2} \| \rho^{m_1} \|^2_{L^\infty},
\end{equation}
\begin{equation}
\begin{split}
|I_4| & \le  \| \partial_{xx} A \|^2_{L^2} + \| \partial_{xx} u \|^2_{L^2} (\| \rho^{\frac{\alpha-1}{2}} \partial_{xx} u \|^2_{L^2} + \| \rho^{m_1}\|^2_{L^\infty} ) \\ & + C(\delta) \| \rho^{m_1}\|^2_{L^\infty} \| \partial_x A \|^2_{L^2} + \delta \| \rho^{\frac{\alpha-1}{2}} \partial^3_x u \|^2_{L^2},
\end{split}
\end{equation}
\begin{equation}
|I_5| \le C(\delta) \| \partial_{xx} u \|^2_{L^2} \| A \|^2_{L^\infty} \| \rho^{m_1}\|^2_{L^\infty} + \delta \| \rho^{\frac{\alpha-1}{2}} \partial^3_x u \|^2_{L^2} ,
\end{equation}
while the capillarity terms can be estimated as follows 
\begin{equation}
|I_6|+ |I_7| \le \| \partial_{xx} A \|^2_{L^2} (\| \rho^{\frac{\alpha-1}{2}} \partial_{xx} u \|^2_{L^2} + \| \rho^{m_1}\|^2_{L^\infty} )
\end{equation}
and 
\begin{equation}
|I_8| \le C(\delta) \| A\|^2_{L^\infty} \| \partial_{xx} A \|^2_{L^2} \| \rho^{m_1}\|^2_{L^\infty}+ \delta \| \rho^{\frac{\alpha-1}{2}} \partial^3_x u \|^2_{L^\infty},
\end{equation}
\begin{equation}
|I_9| \le \| \partial_{xx} A \|^2_{L^2} \| \partial_x A \|_{L^\infty} + \| \partial_{xx} u \|^2_{L^2}+ \| A \|^2_{L^\infty} \| \rho^{m_1}\|^2_{L^\infty} \| \partial_{xx} A \|^2_{L^2}+ \delta \| \rho^{\frac{\alpha-1}{2}} \partial_{xx} u \|^2_{L^2}
\end{equation}
Similarly to the proof of Proposition \ref{Prop h2 est}, the contributions of the stochastic force is given by 
\begin{equation}\label{I_8 s=2}
\begin{split}
|I_{11}| &=\frac{1}{2} \sum_{k=1}^{\infty} \int_{\mathbb{T}} |\partial_{xx} F_k|^2 dx \le c \sum_{k=1}^{\infty} \| \partial_{xx} F_k \|^2_{L^\infty} \le c \sum_{k=1}^{\infty} {f_k}^2 \le C.
\end{split}
\end{equation}
Hence, collecting the estimates \eqref{I_1 s=2}-\eqref{I_8 s=2}, we choose properly the Young constant $\delta$ to get 
\begin{equation}
\begin{split}
& \dfrac{1}{2}\text{d} ( \| \partial_{xx} A \|^2_{L^2} + \| \partial_{xx} u \|^2_{L^2} ) + c\int_{\mathbb{T}} \rho^{\alpha-1} | \partial^3_x u |^2 dx \\ &  \le \bigg(\| \partial_{xx} A \|^2_{L^2} + \| \partial_{xx} u \|^2_{L^2}\bigg)a^{(2)}(t)+b^{(2)}(t)+I_{10},
\end{split}
\end{equation}
with again $c>0$ and $a^{(2)}(t) \in L^1(\Omega; L^1(0,T)), \, b^{(2)}(t) \in L^p(\Omega;L^1(0,T))$ defined by 
\begin{equation*}
\begin{split}
& a^{(2)}(t)= \| \rho^{\frac{\alpha-1}{2}} \partial_{xx} u \|^2_{L^2} + \| \rho^{m_1}\|^2_{L^\infty} +  \| A \|^2_{L^\infty} \| \rho^{m_1} \|^2_{L^\infty} +  \|u\|^2_{L^\infty} \| \rho^{m_1} \|^2_{L^\infty} + \| A \|_{L^\infty}+ C, \\ & 
b^{(2)}(t)=\| \partial_x A \|^2_{L^2} \| \rho^{m_1} \|^2_{L^\infty}+ C
\end{split}
\end{equation*}
Also in this case we first perform a localization argument. We define the stopping time
\begin{equation}\label{gamma2}
\gamma^{(2)}_M= \inf \big\{ t\ge 0 \; :  \int_{0}^{t \land \tau} a^{(2)}(s)ds > M \big\}
\end{equation}
and integrating in time and applying the expectation we deduce 
\begin{equation}
\begin{split}
& \mathbb{E} \bigg[ \sup_{\tau_a \le t \le \tau_b} \frac{1}{2} \bigg( \| \partial_{xx} A(\rho)\|^2_{L^2} + \| \partial_{xx} u \|^2_{L^2} \bigg) \bigg]+ c\mathbb{E} \bigg[ \int_{\tau_a}^{\tau_b}\int_{\mathbb{T}} \rho^{\alpha-1} | \partial^3_x u |^2 dxdt \bigg] \\ &  \le \mathbb{E} ( \| \partial_{xx} A(\rho(\tau_a)) \|^2_{L^2} + \| \partial_{xx} u (\tau_a) \|^2_{L^2} )+ \mathbb{E} \int_{\tau_a}^{\tau_b}(\| \partial_{xx} A(\rho) \|^2_{L^2} + \| \partial_{xx} u \|^2_{L^2})a^{(2)}(t) \\ & +\mathbb{E} \int_{\tau_a}^{\tau_b}b^{(2)}(t) + \mathbb{E} \sup_{ \tau_a \le s \le \tau_b} | \int_{\tau_a}^{t}I_{10}|.
\end{split}
\end{equation}
for $\tau_a$ and $\tau_b$ being two stopping times such that $\tau_a < t < \tau_b < T \land \gamma^{(2)}_M.$
To conclude the estimate we observe that 
\begin{equation*}
\begin{split}
\mathbb{E} \bigg[\sup_{ \tau_a \le s \le \tau_b}  \bigg| {\int_{0}^{s} \int_{\mathbb{T}} \partial_{xx} \mathbb{F}(\rho,u)\partial_{xx} udxdW \bigg| } \bigg] & \lesssim \mathbb{E} \bigg[ \int_{\tau_a}^{\tau_b} C_1 \| \partial_{xx} u \|^2_{L^2} \bigg]^{\frac{1}{2}} \le \mathbb{E} \bigg[ 1+ \int_{\tau_a}^{\tau_b} C_2 \| \partial_{xx} u \|^2_{L^2} \bigg],
\end{split}
\end{equation*}
for two positive constants $C1$ and $C_2.$
Therefore we end up with the following inequality
\begin{equation}\label{befor gronw gamma2}
\begin{split}
& \mathbb{E} \bigg[ \sup_{ \tau_a \le s \le \tau_b} \frac{1}{2}( \| \partial_{xx} A(\rho) \|^2_{L^2} + \| \partial_{xx} u \|^2_{L^2} )\bigg] + c\mathbb{E} \bigg[ \int_{\tau_a}^{\tau_b}\int_{\mathbb{T}} \rho^{\alpha-1} | \partial^3_x u |^2 dxdt \bigg] \\ & \le \mathbb{E} ( \| \partial_{xx} A(\rho(\tau_a)) \|^2_{L^2} + \| \partial_{xx} u (\tau_a) \|^2_{L^2} )+ \mathbb{E} \int_{\tau_a}^{\tau_b}(\| \partial_{xx} A(\rho) \|^2_{L^2} + \| \partial_{xx} u \|^2_{L^2})a^{(2)}(t)dt \\ & +\mathbb{E} \int_{\tau_a}^{\tau_b}b^{(2)}(t)dt.
\end{split}
\end{equation}
Finally, by means of the stochastic Gronwall Lemma we deduce
\begin{equation}
\mathbb{E} \bigg(\sup_{  s \in [0, T \land \gamma^{(2)}_M \land \tau] }\dfrac{1}{2}( \| \partial_{xx} A(\rho) \|^2_{L^2}+ \| \partial_{xx} u \|^2_{L^2}) + \int_{0}^{T \land \gamma^{(2)}_M \land \tau}\int_{\mathbb{T}} \rho^{\alpha-1} | \partial^3_x u|^2 dxds \bigg) < +\infty
\end{equation}
\\
and since by virtue of Proposition \ref{energy prop}, Proposition \ref{BD prop}, Proposition \ref{prop:vacuum} and Proposition \ref{H2 estimate} 
\begin{equation}\label{expectation a2}
\mathbb{E} \int_{0}^{T \land \tau} a^{(2)}(t) < \infty ,
\end{equation}
then by using Markov inequality 
\begin{equation}\label{markov M s=2}
\mathbb{P} \big( \gamma^{(2)}_M \le T \big) \le\mathbb{P} \bigg( \int_{0}^{T \land \tau} a^{(2)}(t)dt \ge M \bigg) \le \frac{1}{M} \mathbb{E} \bigg( \int_{0}^{T \land \tau} a^{(2)}(t)dt \bigg) 
\end{equation}
which implies $\lim_{M \to \infty} \gamma^{(2)}_M= \infty$ a.s..
Note that the key element in \eqref{expectation a2} is the control of $\rho^{\frac{\alpha-1}{2}} \partial_{xx} u  \in L^1_wL^2_tL^2_x$ which is guaranteed by virtue of Proposition \ref{H2 estimate}. 
This concludes the proof of \eqref{eq:s2}.  
\\
\\
The final step is to deduce the following regularity
\begin{equation}\label{eq:s3}
\begin{split}
& \partial^3_{x} A(\rho) \in L^1(\Omega;L^\infty(0,T  \land \tau;L^2(\mathbb{T}))),\quad \partial^3_{x} u \in L^1(\Omega;L^\infty(0,T \land \tau;L^2(\mathbb{T}))), \\ & 
\rho^{\frac{\alpha-1}{2}}\partial^4_{x} u \in L^1(\Omega;L^2(0,T \land \tau;L^2(\mathbb{T}))).
\end{split}
\end{equation}
which will be obtained by applying the same procedure used to deduce \eqref{eq:s2}. For completeness we highlight the main elements of the proof.
\\
\\
The total $H^{s+1}(\mathbb{T}) \times H^{s}(\mathbb{T})$ balance for $s=3$ reads
\begin{equation}
\begin{split}
& \dfrac{1}{2} d \bigg( \| \partial^3_{x} A \|^2_{L^2} + \| \partial^3_{x} u \|^2_{L^2}  \bigg) +  \int_{\mathbb{T}} \rho^{\alpha-1} | \partial^4_{x} u |^2 dxdt = -  \int_{\mathbb{T}}  \partial^4_x ( A u )\partial^3_{x} Adxdt \\ & - \int_{\mathbb{T}} \partial^4_x( \mu'_k(\rho) \partial_x u ) \partial^3_{x} A dxdt -\int_{\mathbb{T}} \partial^3_{x}(u \partial_x u) \partial^3_{x} u dxdt  -\int_{\mathbb{T}} \partial^3_{x} \bigg( \dfrac{\partial_x p(\rho) }{\rho} \bigg) \partial^3_{x} u dxdt \\ & + \int_{\mathbb{T}} \partial^3_{x} \bigg( \dfrac{\partial_x \mu(\rho) \partial_x u}{\rho} \bigg) \partial^3_{x} u dxdt - \int_{\mathbb{T}}  \partial_x \bigg( \frac{\mu(\rho)}{\rho} \bigg) \partial^3_x u \partial^4_x u dxdt  - \int_{\mathbb{T}} \partial_{xx} \bigg( \frac{\mu(\rho)}{\rho} \bigg) \partial_{xx} u \partial^4_x u dxdt \\ & + \int_{\mathbb{T}} \partial^4_x ( \mu'_k(\rho) \partial_x A) \partial^3_{x} u dxdt + \int_{\mathbb{T}} \partial^3_{x} (A \partial_x A ) \partial^3_{x} u dxdt + \int_{\mathbb{T}} \partial^3_{x} \mathbb{F}(\rho,u) \partial^3_{x} u \, dxdW \\ & + \dfrac{1}{2} \sum_{k=1}^{\infty} \int_{\mathbb{T}} | \partial^3_{x}F_k |^2 \, dxdt
\end{split}
\end{equation}
and we point out the following high order cancellation which is obtained after a tedious but straightforward computation
\begin{equation}\label{sum with cancellation s=3}
\begin{split}
& - \int_{\mathbb{T}} \partial^4_x( \mu'_k(\rho) \partial_x u ) \partial^3_{x} A dx+ \int_{\mathbb{T}} \partial^4_x ( \mu'_k(\rho) \partial_x A) \partial^3_{x} u dx \\ & = -\int_{\mathbb{T}} \bigg( \frac{\beta+1}{2} \bigg) | \partial^3_x A|^2 \partial_x u dx- \int_{\mathbb{T}} (2\beta+2) \partial_{xx} A \partial_{xx} u \partial^3_x A dx \\ & -\int_{\mathbb{T}} (2 \beta+2) A \partial^4_x u \partial^3_x Adx - \int_{\mathbb{T}} (4\beta+4) \partial_x A \partial_{xx} A  \partial^4_x u dx
\end{split}
\end{equation}
thus we have 
\begin{align}
& \dfrac{1}{2} d \bigg( \| \partial^3_{x} A \|^2_{L^2} + \| \partial^3_{x} u \|^2_{L^2}  \bigg) +  \int_{\mathbb{T}} \rho^{\alpha-1} | \partial^4_{x} u |^2 dxdt = -  \int_{\mathbb{T}}  \partial^4_x ( A u )\partial^3_{x} Adxdt \nonumber\\ & -\int_{\mathbb{T}} \partial^3_{x}(u \partial_x u) \partial^3_{x} u dxdt  -\int_{\mathbb{T}} \partial^3_{x} \bigg( \dfrac{\partial_x p(\rho) }{\rho} \bigg) \partial^3_{x} u dxdt + \int_{\mathbb{T}} \partial^3_{x} \bigg( \dfrac{\partial_x \mu(\rho) \partial_x u}{\rho} \bigg) \partial^3_{x} u dxdt \nonumber\\ & - \int_{\mathbb{T}}  \partial_x \bigg( \frac{\mu(\rho)}{\rho} \bigg) \partial^3_x u \partial^4_x u dxdt  - \int_{\mathbb{T}} \partial_{xx} \bigg( \frac{\mu(\rho)}{\rho} \bigg) \partial_{xx} u \partial^4_x u dxdt -\int_{\mathbb{T}} \bigg( \frac{\beta+1}{2} \bigg) | \partial^3_x A|^2 \partial_x u dx\label{s=3}\\ & - \int_{\mathbb{T}} (2\beta+2) \partial_{xx} A \partial_{xx} u \partial^3_x A dx -\int_{\mathbb{T}} (2 \beta+2) A \partial^4_x u \partial^3_x Adx - \int_{\mathbb{T}} (4\beta+4) \partial_x A \partial_{xx} A  \partial^4_x u dx\nonumber\\ & + \int_{\mathbb{T}} \partial^3_{x} (A \partial_x A ) \partial^3_{x} u dxdt + \int_{\mathbb{T}} \partial^3_{x} \mathbb{F}(\rho,u) \partial^3_{x} u \, dxdW + \dfrac{1}{2} \sum_{k=1}^{\infty} \int_{\mathbb{T}} | \partial^3_{x}F_k |^2 \, dxdt=\sum_{k=1}^{13} |I_i| \nonumber
\end{align}
Again, we estimate the right hand side of \eqref{s=3} and then we apply a stochastic Gronwall argument.
\begin{equation}\label{I_1 s=3}
\begin{split}
|I_1| & \le \| \partial^3_x A\|^2_{L^2} \| \partial_x u \|_{L^\infty} + \| \partial^3_x A \|^2_{L^2} \| \partial_{xx} u \|^2_{L^2} + \| \partial_{xx} A\|^2_{L^2}+ \| \partial^3_x u \|^2_{L^2} \| \partial_x A \|^2_{L^\infty} \\ & + C(\delta) \| \partial^3_x A \|^2_{L^2} \| A \|_{L^\infty} \| \rho^{m_1}\|^2_{L^\infty} + \delta \| \rho^{\frac{\alpha-1}{2}} \partial^4_x u \|^2_{L^2},
\end{split}
\end{equation}
\begin{equation}
|I_2| \le \| \partial^3_x u \|^2_{L^2} \| \partial_{xx} u \|_{L^2} + C(\delta) \| \partial^3_x u \|^2_{L^2} \| u \|^2_{L^\infty} \| \rho^{m_1} \|^2_{L^\infty} + \delta \| \rho^{\frac{\alpha-1}{2}} \partial^4_x u \|^2_{L^2},
\end{equation}
\begin{equation}
|I_3| \le  \| \partial^3_{x} u \|^2_{L^2}+ \| \partial_{xx} A \|^2_{L^2} \| \rho^{m_1} \|^2_{L^\infty}+ \| \partial^3_{x} A \|^2_{L^2} \| \rho^{m_1} \|^2_{L^\infty}.
\end{equation}
Similarly we use H\"{o}lder, Sobolev, Poincarè and Young inequalities to estimate the viscosity terms
\begin{equation}
\begin{split}
|I_4|+ |I_5|+ |I_6| & \le \| \partial^3_{x} A \|^2_{L^2} \| \rho^{m_1} \|^2_{L^\infty} + \| \partial^3_{x} u \|^2_{L^2} \| \partial_x u \|^2_{L^\infty}+ \| \partial^3_x u \|^2_{L^2} \| \partial_{xx} A \|^2_{L^2} \\ & + \| \partial_{xx} u \|^2_{L^2} \| \rho^{m_1} \|^2_{L^\infty} + C(\delta) \| \rho^{m_1}\|^2_{L^\infty} \| \partial_x A \|^2_{L^2} \| \partial^3_x u \|^2_{L^2} + \delta \| \rho^{\frac{\alpha-1}{2}} \partial^3_x u \|^2_{L^2},
\end{split}
\end{equation}
while for the terms arising from the capillarity tensor we have 
\begin{equation}
\begin{split}
|I_7|+|I_8|+|I_9|+|I_{10}| + |I_{11}| & \le \| \partial^3_x A \|^2_{L^2} \| \partial_{xx} u \|^2_{L^2} + \| \partial_{xx} A \|^2_{L^2} \\ & + C(\delta) \| A \|_{L^\infty} \| \partial^3_x A \|^2_{L^2} \| \rho^{m_1} \|^2_{L^\infty} + \delta \| \rho^{\frac{\alpha-1}{2}} \partial^4_x u \|^2_{L^2} \\ & + C(\delta) \| \partial_x A \|^2_{L^\infty} \| \partial_{xx} A \|^2_{L^2} \| \rho^{m_1} \|^2_{L^\infty} + \| \partial^3_x A \|^2_{L^2} \| \partial_x A\|_{L^\infty}.
\end{split}
\end{equation}
The It\^{o} reminder term $I_9$ is estimated by using \eqref{F1}-\eqref{F2} as follows
\begin{equation}\label{I_9 s=3}
\begin{split}
|I_{13}| &=\frac{1}{2} \sum_{k=1}^{\infty} \int_{\mathbb{T}} |\partial^3_{x} F_k|^2 dx=\frac{1}{2} \sum_{k=1}^{\infty} \| \partial^3_{x} F_k \|^2_{L^2} \le c \sum_{k=1}^{\infty} \| \partial^3_{x} F_k \|^2_{L^\infty} \le c \sum_{k=1}^{\infty} {\alpha_k}^2 \le C.
\end{split}
\end{equation}
Summing up the estimates \eqref{I_1 s=3}-\eqref{I_9 s=3} and by choosing  the Young constant $\delta$ small enough we get
\begin{equation}
\begin{split}
& \dfrac{1}{2}\text{d} ( \| \partial^3_{x} A \|^2_{L^2} + \| \partial^3_{x} u \|^2_{L^2} ) + c\int_{\mathbb{T}} \rho^{\alpha-1} | \partial^4_x u |^2 dx \\ &  \le \bigg(\| \partial^3_{x} A \|^2_{L^2} + \| \partial^3_{x} u \|^2_{L^2}\bigg)a^{(3)}(t)+b^{(3)}(t)+I_{12},
\end{split}
\end{equation}
with $c>0$ and 
\begin{equation*}
\begin{split}
& a^{(3)}(t)= \| \partial_x u \|_{L^\infty}+ \| \partial_{xx} u \|^2_{L^2} + \| \partial_x A \|^2_{L^\infty} + \| A \|_{L^\infty} \| \rho^{m_1} \|^2_{L^\infty} + \| u\|^2_{L^\infty} \| \rho^{m_1} \|^2_{L^\infty} , \\ & 
b^{(3)}(t)= \| \partial_{xx} A \|^2_{L^2} \| \rho^{m_1} \|^2_{L^\infty} + \|\partial_{xx} u \|^2_{L^2} \| \rho^{m_1} \|^2_{L^\infty} +C.
\end{split}
\end{equation*}
Now since also in this case all the terms appearing in $a^{(3)}$ are controlled in $L^1(\Omega; L^1(0,T)),$ we recall \eqref{gamma2} and we define the stopping time
\begin{equation}\label{gamma3}
\gamma^{(3)}_M= \gamma^{(2)}_M \land \inf \big\{ t\ge 0 \; :  \int_{0}^{t \land \tau} a^{(3)}(s)ds > M \big\}.
\end{equation}
Therefore, we argue as in the previous cases and we use the auxiliary stopping times $\tau_a$ and $\tau_b$ satisfying $\tau_a < t < \tau_b < T \land \gamma^{(3)}_M,$ then we estimate the stochastic integral $I_{12}$ by virtue of the Burkholder-Davis-Gundy inequality, \eqref{F1}-\eqref{F2} and similarly to \eqref{befor gronw gamma2} we deduce the following inequality.
\begin{equation}
\begin{split}
& \mathbb{E} \bigg[ \sup_{ \tau_a \le s \le \tau_b} \frac{1}{2}( \| \partial^3_{x} A \|^2_{L^2} + \| \partial^3_{x} u \|^2_{L^2} )\bigg] + c\mathbb{E} \bigg[ \int_{\tau_a}^{\tau_b}\int_{\mathbb{T}} \rho^{\alpha-1} | \partial^4_x u |^2 dxdt \bigg] \\ & \le \mathbb{E} ( \| \partial^3_{x} A(\rho(\tau_a)) \|^2_{L^2} + \| \partial^3_{x} u (\tau_a) \|^2_{L^2} )+ \mathbb{E} \int_{\tau_a}^{\tau_b}(\| \partial^3_{x} A \|^2_{L^2} + \| \partial^3_{x} u \|^2_{L^2})a^{(3)}(t)dt \\ & +\mathbb{E} \int_{\tau_a}^{\tau_b}b^{(3)}(t)dt,
\end{split}
\end{equation}
from which we apply the stochastic Gronwall Lemma and observe that $\lim_{M \to \infty} \gamma^{(3)}_M= \infty$ a.s.  to infer \eqref{eq:s3}. This, together with the estimate for the lower order derivative norms, concludes the proof of Proposition \ref{Prop global s+1 s}.
For completeness, we also observe that by defining
\begin{equation}\label{min gamma M}
\gamma_M= \min_{s=1,2,3} \gamma^{(s)}_M,
\end{equation}
then trivially $\gamma_M \rightarrow \infty$ a.s as $M \rightarrow \infty.$ 
\end{proof}
\section{Local well-posedness}\label{Sec4}
\noindent
This section is devoted the the proof of the local well-posedness result, Theorem \ref{Main Theorem local}.  In particular, we infer existence and uniqueness of a maximal strong local pathwise solution $(\rho,u, \tau),$ where $\tau$ is a strictly positive stopping time depending on the $W^{2,\infty}$ norm of the density and the velocity.
We consider a multi-layer approximation scheme in the spirit of \cite{Breit Feir Hof 3}, \cite{Feir}. Then, we solve each level of the approximation scheme and we prove convergence results by means of stochastic compactness arguments. 
For the reader convenience, we summarise here the main strategy of the proofs for each level of approximation.
\begin{itemize}
\item At first level of approximation we introduce a class of cut off operators $\theta_R(y)$ which are applied to the nonlinearities of system \eqref{main system}.
\item In order to solve the truncated system we consider the Galerkin projection over the finite dimensional space of trigonometric polynomial of order $m$ and then we perform the limit as $m \rightarrow \infty. $ To solve the Galerkin system we apply a standard fixed point argument. 
\item The limit $m\rightarrow \infty$ is obtained by combining Prokhorov's theorem and Skorokhod's theorem. In particular, we prove the tightness of the family of the laws of solutions over a suitable path space and we identify the limit as a solution of the truncated system.  Note however that at this point only existence of a strong martingale solutions is recovered and hence an additional argument is required. To this purpose we prove a pathwise uniqueness result and we make use of the Gyongy-Krylov characterization of the convergence in probability to return to the original system.
\item The solvability of the second layer of the approximation scheme relies on the introduction of a suitable stopping time $\tau_R$ which depend on the $W_x^{2, \infty}$ norm of the solution, together with the use of an appropriate change of variable and stopping time arguments.
\end{itemize}
We emphasize that in the deterministic setting, local well‑posedness is typically established through high‑order regularity estimates, which also involve time derivatives. The stochastic framework is therefore more restrictive due to the limited regularity in-time of the stochastic elements.  Our analysis is based on the technique developed in \cite{Breit Feir Hof 3} and in the monograph \cite{Feir}, see also \cite{D.P.S.} for the analysis of the Quantum-Navier-Stokes equations and to \cite{Cho 2004}, \cite{Don-Pes} for local well-posedness results and related blow-up criteria for deterministic models of compressible fluids.
\\
\\
We start by introducing the new variables $(r,u)$ 
\begin{equation}\label{new variables}
r:= \begin{cases} 
\big( \frac{2}{\beta+1} \big) \rho^{\frac{\beta+1}{2}} \; \quad \beta \neq -1, \\
\log \rho \quad \quad \quad \quad \beta=-1.
\end{cases}
\end{equation}
Note that with the following choice of $r,$ we recover the augmented formulation \eqref{eq A(rho)}-\eqref{u systm} used in Section 3. The following relations indeed hold $$\partial_x r(\rho)= A(\rho), \quad \; \bigg( \frac{\beta+1}{2}\bigg) r(\rho)= \mu'_k(\rho).$$ Therefore, the skew-adjoint structure of the equation yields several cancellations  also in the estimates for the local existence result.  We rewrite system \eqref{main system} as follows
\begin{equation}\label{local eq r}
\partial_t r+ u \partial_x r+ \mu'_k(\rho) \partial_x u= 0
\end{equation}
\begin{equation}\label{local eq u}
du+ \big[u\partial_x u+ \dfrac{\partial_x p(\rho)}{\rho}\big]dt= \bigg[\dfrac{\partial_x (\mu(\rho)\partial_x u)}{\rho}+ \partial_x( \mu'_k(\rho) \partial_{xx} r) + \partial_x r \partial_{xx} r\bigg]dt+ \mathbb{F}(\rho,u) dW.
\end{equation}
with initial conditions $(r_0,u_0)$
\begin{equation} \label{Approx C.I}
r_0:= \begin{cases} 
\big( \frac{2}{\beta+1} \big) \rho_0^{\frac{\beta+1}{2}} \; \quad \beta \neq -1, \\
\log \rho_0 \quad \quad \quad \; \; \beta=-1.
\end{cases}
\end{equation}
Note that, in order to avoid the use of heavy notations, we keep the dependence of the functions $p, \, \mu, \, \mu'_k$ and $\mathbb{F}$ on the old variable $\rho$ instead of making explicit their dependence on $r.$
\subsection{First level of approximation}
We introduce the fist level of the approximation scheme. In particular we define the family of cut-off operators $\theta_R: [0,\infty) \rightarrow [0,1]$ which are smooth non-increasing functions satisfying 
\begin{equation*}
\theta_R(y)=\begin{cases} 1, \quad 0 \le y \le R \\
0, \quad y \ge R+1.
\end{cases}
\end{equation*}
and we consider the following approximating system 
\begin{equation}\label{trunc local eq r}
\partial_t r+\theta_R(y) (u \partial_x r)+ \theta_R(y)(\mu'_k(\rho) \partial_x u)= 0
\end{equation}
\begin{equation}\label{trunc local eq u}
\begin{split}
du+ \theta_R(y)\big[u\partial_x u+ \dfrac{\partial_x p(\rho)}{\rho}\big]dt & =  \theta_R(y)\bigg[\dfrac{\partial_x (\mu(\rho)\partial_x u)}{\rho}+ \partial_x( \mu'_k(\rho) \partial_{xx} r) + \partial_x r \partial_{xx} r\bigg]dt \\ & + \theta_R(y)\mathbb{F}(\rho,u) dW.
\end{split}
\end{equation}
and our choice for the argument of the cut-off operators is $y= \| r \|_{W^{2,\infty}_x}+ \| u \|_{W^{2,\infty}_x}.$
\\
\\
We use the cut-off operators $\theta_R$ in order to deal with the nonlinearities of system \eqref{local eq r}-\eqref{local eq u} and the truncated terms turn out to be globally Lipschitz continuous functions.  Clearly,  for $\beta=-1$ we have that $\mu'_k(\rho)=1$ and $r=\log \rho.$ In this case the terms depending on $\mu'_k(\rho)$ in \eqref{trunc local eq r} and \eqref{trunc local eq u} are linear and thus no truncations are required. We highlight that this case has been already addressed in \cite{D.P.S.} by the author, Donatelli and Spirito.
Furthermore, we choose the argument of $\theta_R(y)$ to be the $W^{2,\infty}$ norm of the solution, hence condition \eqref{limit cond} in Definition \ref{Def2} follows by construction.  This will be highlighted in the stopping time argument.  \\
\\
Now we state different definitions of solutions for the approximating system \eqref{trunc local eq r}-\eqref{trunc local eq u}.
We start by introducing the class of strong martingale solutions. These solutions are strong in PDEs and weak in probability sense.  Specifically, they satisfy the time integrated version of \eqref{trunc local eq r}-\eqref{trunc local eq u} while the Wiener process $W$ and the stochastic basis with right continuous filtration $(\Omega, \mathcal{F},(\mathcal{F}_t)_{t \ge 0},\mathbb{P})$ are not a priori given but they are considered as part of the solution.
\begin{definition} (Strong martingale solution).  \label{def strong mart}
Let $\Lambda$ be a Borel probability measure on $H^{s+1}(\mathbb{T}) \times H^s(\mathbb{T}), \; s \in \mathbb{N}.$ A multiplet $$((\Omega, \mathcal{F}, (\mathcal{F}_t)_{t \ge 0}, \mathbb{P}),r,u, W)$$ is called a strong martingale solution to the approximated system \eqref{trunc local eq r}-\eqref{trunc local eq u} with initial law $\Lambda$,  provided that the following conditions hold:
\begin{itemize}
\item[(1)]
$(\Omega, \mathcal{F},(\mathcal{F}_t)_{t \ge 0},\mathbb{P}), $ is a stochastic basis with a complete right-continuous filtration;
\item[(2)]
$W$ is an $(\mathcal{F}_t)$-cylindrical Wiener process;
\item[(3)]
$r$ is a $H^{s+1}(\mathbb{T})$-valued $(\mathcal{F}_t)$-progressively measurable stochastic process satisfying
 $$ r \in L^2(\Omega; C([0,T];H^{s+1}(\mathbb{T})))\quad r >0, \; \  \mathbb{P}-\text{a.s.}; $$
 \item[(4)]
 the velocity $u$ is a $H^s(\mathbb{T})$-valued $(\mathcal{F}_t)$-progressively measurable stochastic process satisfying
 $$ u \in L^2(\Omega; C([0,T];H^s(\mathbb{T}))) \cap L^2(\Omega; L^2(0,T;H^{s+1}(\mathbb{T})));$$
 \item[(5)]
 there exists an $\mathcal{F}_0$-measurable random variable $[r_0,u_0]$ such that $\Lambda= \mathcal{L}[r_0,u_0];$
\item[(6)]
the equations
$$
r(t)=r_0- \int_{0}^{t}\theta_R(y)[ u\partial_x r] ds-\int_{0}^{t}\theta_R(y)[ \mu'_k(\rho)\partial_x u]\text{d}s$$
\begin{equation*}
\begin{split}
u(t)&=u_0-\int_{0}^{t}\theta_R(y)[u\partial_x u]ds - \int_{0}^{t}\theta_R(y)\bigg[\frac{\partial_x p(\rho)}{\rho}\bigg]ds +\int_{0}^{t}\theta_R(y)\bigg[ \frac{\partial_x(\mu(\rho)\partial_x u}{\rho} \bigg]ds \\ & +\int_{0}^{t}\theta_R(y)[\partial_x(\mu'_k(\rho) \partial_{xx} r ]ds +\int_{0}^{t}\theta_R(y)[ \partial_x r \partial_{xx}r]ds +\int_{0}^{t} \theta_R(y) \mathbb{F}(\rho,u)\text{d}W
\end{split}
\end{equation*}
holds for all $t \in [0,T] \ \mathbb{P}-a.s.;$
\end{itemize}
\end{definition}
\noindent
For completeness, by analogy with Definition \ref{Def1}, we state the definition of strong pathwise solution for the approximating systems.  These solutions are strong also in the probability sense, meaning that the stochastic setting is given a priori.
\begin{definition}(Strong pathwise solution). \label{strong path sol}
Let $(\Omega, \mathcal{F},(\mathcal{F}_t)_{t \ge 0},\mathbb{P}), $ be a stochastic basis with a complete right-continuous filtration and let $W$ be an $(\mathcal{F}_t)$-cylindrical Wiener process.
Then $(r,u)$ is called a strong pathwise solution to the approximate system \eqref{trunc local eq r}-\eqref{trunc local eq u} with initial conditions $(r_0,u_0)$,  provided that the following conditions hold:
\begin{itemize}
\item[(1)]
 $r$ is a $H^{s+1}(\mathbb{T})$-valued $(\mathcal{F}_t)$-progressively measurable stochastic process satisfying
 $$r\in L^2(\Omega; C([0,T];H^{s+1}(\mathbb{T}))), \quad r >0, \;  \  \mathbb{P}-\text{a.s.}; $$
 \item[(2)]
 the velocity $u$ is a $H^s(\mathbb{T})$-valued $(\mathcal{F}_t)$-progressively measurable stochastic process satisfying
 $$ u \in L^2(\Omega; C([0,T];H^s(\mathbb{T}))) \cap L^2(\Omega; L^2(0,T;H^{s+1}(\mathbb{T})));$$
\item[(3)]
the equations
$$
r(t)=r_0- \int_{0}^{t}\theta_R(y)[ u\partial_x r] ds-\int_{0}^{t}\theta_R(y)[ \mu'_k(\rho)\partial_x u]\text{d}s$$
\begin{equation*}
\begin{split}
u(t)&=u_0-\int_{0}^{t}\theta_R(y)[u\partial_x u]ds - \int_{0}^{t}\theta_R(y)\bigg[\frac{\partial_x p(\rho)}{\rho}\bigg]ds +\int_{0}^{t}\theta_R(y)\bigg[ \frac{\partial_x(\mu(\rho)\partial_x u}{\rho} \bigg]ds \\ & +\int_{0}^{t}\theta_R(y)[\partial_x(\mu'_k(\rho) \partial_{xx} r ]ds +\int_{0}^{t}\theta_R(y)[ \partial_x r \partial_{xx}r]ds +\int_{0}^{t} \theta_R(y) \mathbb{F}(\rho,u)\text{d}W
\end{split}
\end{equation*}
holds for all $t \in [0,T] \ \mathbb{P}-a.s.;$
\end{itemize}
\end{definition}
\noindent
The main result we aim to prove for the approximating system is the following
\begin{theorem}\label{well posedness trunc system}
Let the coefficients $F_k$ satisfy the hypothesis \eqref{F1}-\eqref{F2} and let the initial datum $(r_0,u_0)$ having regularity \eqref{C.I. Momenta strong}. 
Then the following conditions hold:
\begin{itemize}
\item[(1)]
There exists a strong martingale solution to the problem \eqref{trunc local eq r}-\eqref{trunc local eq u} in the sense of Definition \ref{def strong mart} with initial law $\Lambda= \mathcal{L}[(r_0,u_0)].$ Moreover,  there exists a deterministic constant $\bar r_R$ such that $$r(\cdot, t) \ge \overline{r}_R \quad \mathbb{P}-\text{a.s.} \ \text{for all} \ t \in [0,T]$$
and 
\begin{equation}
\mathbb{E} \bigg[ \frac{1}{2}\sup_{t \in [0,T]} \bigg( \| r(t)\|^2_{H^{s+1}(\mathbb{T})}+ \| u(t) \|^2_{H^{s}(\mathbb{T})} \bigg) + \int_{0}^{T} \| u \|^2_{H^{s+1}(\mathbb{T})} dt \bigg]^p \le c(R, r_0,u_0, p) < \infty
\end{equation}
for all $1 \le p < \infty$.
\\
\item[(2)]
If $s > \frac{7}{2},$ then pathwise uniqueness holds.  Specifically,  if $(r^1,u^1),  (r^2,u^2)$ are two strong solutions to \eqref{trunc local eq r}-\eqref{trunc local eq u} defined on the same stochastic basis with the same Wiener process $W$ and $$\mathbb{P}(r_0^1=r_0^2,  u_0^1=u_0^2)=1,  $$
then $$\mathbb{P}(r^1(t)=r^2(t),  u^1(t)=u^2(t), \  \text{for all} \ t \in [0,T])=1. $$
\end{itemize}
Consequently,  there exists a unique strong pathwise solution to \eqref{trunc local eq r}-\eqref{trunc local eq u} in the sense of Definition \ref{strong path sol}.
\end{theorem}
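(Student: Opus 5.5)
I would follow the scheme announced at the start of Section \ref{Sec4}. I begin with a Galerkin approximation of \eqref{trunc local eq r}-\eqref{trunc local eq u}. Let $\Pi_m$ be the $L^2$-orthogonal projection onto trigonometric polynomials of degree at most $m$, and consider the finite-dimensional SDE system for $(r_m,u_m)=(\Pi_m r,\Pi_m u)$, with the same cut-offs $\theta_R(y_m)$ and initial data $(\Pi_m r_0,\Pi_m u_0)$.

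On the Galerkin space all norms are equivalent, so $y_m$ is comparable to the $H^{s}$ norms. The cut-off makes every drift and diffusion coefficient locally Lipschitz with at most linear growth. The only exception is the dependence on $\rho=\rho(r)$, which requires $r_m$ to stay away from the degenerate value. This is handled by the cut-off: on $\{\theta_R>0\}$ we have $\|\partial_t r_m\|_{L^\infty}\lesssim C(R)$, hence $r_m\ge \overline r_R>0$ on $[0,T]$, if necessary after shrinking $T$ and iterating. With this, a standard fixed point (or classical SDE theory) gives a unique global strong solution of the Galerkin system.

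Next come uniform $H^{s+1}\times H^{s}$ bounds. I apply $\partial_x^{j}$, $j\le s$, to the $r$-equation tested with $\partial_x^{j+1}r$, and to the $u$-equation via the It\^o formula for $\frac12\|\partial_x^j u\|_{L^2}^2$. The top-order capillarity terms $\theta_R\,\mu'_k\partial_x u$ and $\theta_R\,\partial_x(\mu'_k\partial_{xx}r)$ cancel up to commutators, exactly as in \eqref{sum with cancellation} and \eqref{sum with cancellation s=3}; this is the skew-adjoint structure, and it is preserved because $\Pi_m$ is self-adjoint and commutes with $\partial_x$. The viscous term gives dissipation $\theta_R\int \rho^{\alpha-1}|\partial_x^{j+1}u|^2$. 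All commutators are bounded by Moser/Kato--Ponce estimates in terms of $y\le R+1$ and the lower bound $\overline r_R$. The noise is treated with BDG and \eqref{F1}-\eqref{F2}, and Gronwall then gives the moment bound of item (1), uniformly in $m$.

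\textbf{Main obstacle: dissipation degenerating where $\theta_R=0$.} The dissipation vanishes on $\{\theta_R=0\}$, but there the equation is frozen, so the $H^{s+1}$ control of $u$ must be interpreted through $\theta_R$. I would either add an artificial viscosity $\varepsilon\partial_{xx}$ that is removed later, or accept $\int\theta_R\|u\|_{H^{s+1}}^2$. I expect this, together with the commutator bookkeeping for $\partial_x^{s+1}$ of the nonlinear function $\mu'_k(\rho)=\frac{\beta+1}{2}r$ times $\partial_{xx}r$, to be the main difficulty.

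Compactness follows. Using the equations, I get H\"older-in-time bounds in $H^{s-1}\times H^{s-2}$ (with Kolmogorov's criterion for the stochastic integral). Together with the uniform bounds, these give tightness of the laws of $(r_m,u_m,W)$ on $C([0,T];H^{s+1-\epsilon})\times \big(C([0,T];H^{s-\epsilon})\cap L^2(0,T;H^{s+1-\epsilon})\big)\times C([0,T];\mathfrak U_0)$. Prokhorov and Skorokhod (Jakubowski's version if needed) give a.s. convergence on a new probability space. The limit is identified by passing to the limit in the equations, which requires continuity of $\theta_R(y)$ under $W^{2,\infty}$ convergence; this is implied by $H^{s-\epsilon}$ convergence since $s-\epsilon>5/2$. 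The stochastic integral converges by the standard lemma on convergence of stochastic integrals (Feireisl et al.). Lower semicontinuity transfers the bounds, and this yields the strong martingale solution.

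For pathwise uniqueness in item (2), I take the difference of two solutions on the same basis and estimate it in the weaker norm $H^{2}\times H^{1}$ (or $H^{s-1}\times H^{s-2}$), again using the cancellation at top order. The cut-off differences satisfy $|\theta_R(y^1)-\theta_R(y^2)|\lesssim \|r^1-r^2\|_{W^{2,\infty}}+\|u^1-u^2\|_{W^{2,\infty}}$. This is controlled by the difference in $H^{3+}\times H^{3+}$, which is why $s>7/2$ is needed: the difference must be estimated in a space embedding into $W^{2,\infty}$ while losing one derivative relative to $H^{s+1}\times H^s$. Localizing with stopping times on the individual $H^s$ norms and applying the stochastic Gronwall lemma gives zero difference. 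Finally, the Gy\"ongy--Krylov lemma, applied to pairs of Galerkin subsequences whose joint limits are concentrated on the diagonal by pathwise uniqueness, gives convergence in probability on the original space, and hence the unique strong pathwise solution.
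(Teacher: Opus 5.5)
Your overall route is the paper's: Galerkin projection of both $r$ and $u$, uniform $H^{s+1}\times H^{s}$ bounds via the skew-adjoint cancellation, tightness and Skorokhod, pathwise uniqueness one derivative below the solution regularity, then Gy\"ongy--Krylov. Two steps, however, do not work as written.

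\textbf{The lower bound on $r$.} From $\|\partial_t r_m\|_{L^\infty}\le C(R)$ you only get $r_m(t)\ge \min r_m(0)-C(R)t$. Restarting at a later time reproduces the same linear bound, so positivity is guaranteed only for $t<\min r_0/C(R)$. The theorem needs a deterministic bound on an arbitrary $[0,T]$. This matters: in the proof of Theorem \ref{Main Theorem local}, the stopping times $\tau_R$ are built from the truncated solution on $[0,T]$ with $T$ independent of $R$.

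The missing idea is the multiplicative structure of the $r$-equation. Since $\mu'_k(\rho)=\frac{\beta+1}{2}r$, it reads $\partial_t r+\theta_R(y)\,u\,\partial_x r=-\frac{\beta+1}{2}\,\theta_R(y)\,(\partial_x u)\,r$ with $|\theta_R(y)\partial_x u|\le R+1$. Along characteristics this gives $|r(t)|\ge \min|r_0|\,e^{-\frac{|\beta+1|}{2}(R+1)t}$ for every $t$. For $\beta=-1$ nothing is needed since $\rho=e^{r}$; for $\beta<-1$ one has $r<0$, so the bound concerns $|r|$.

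Because $\Pi_m$ destroys this structure, at the Galerkin level you should:
\begin{itemize}
\item replace $\rho(r)$ in the singular coefficients $\partial_x p(\rho)/\rho$, $\partial_x(\mu(\rho)\partial_x u)/\rho$, $\mathbb{F}(\rho,u)$ by a smooth extension that is inactive for $|r|\ge\frac12\min|r_0|e^{-\frac{|\beta+1|}{2}(R+1)T}$;
\item keep $\mu'_k=\frac{\beta+1}{2}r$ untouched, so the cancellation survives;
\item check in the limit, via characteristics, that the extension is never used.
\end{itemize}
Alternatively, add $\|r^{-1}\|_{L^\infty}$ to the argument of $\theta_R$. The paper asserts this bound without argument, so you have to supply it.

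\textbf{The norm for the difference in (2).} The spaces you name do not close the estimate. $H^2\times H^1$ never controls $\|u^1-u^2\|_{W^{2,\infty}}$. $H^{s-1}\times H^{s-2}$ does so only if $s-2>5/2$, i.e.\ $s>9/2$; for $s=4$ the velocity difference sits in $H^2$, which does not embed into $W^{2,\infty}$.

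What your last sentence describes, and what the paper does, is $H^{s'+1}\times H^{s'}$ with $5/2<s'\le s-1$, e.g.\ $H^{s}\times H^{s-1}$.
\begin{itemize}
\item $s'>5/2$ is needed because in one dimension $H^{s'}$ embeds into $W^{2,\infty}$ exactly in that range (the threshold is $5/2$, not $3$), and $|\theta_R(y^1)-\theta_R(y^2)|$ requires this embedding.
\item $s'\le s-1$ is needed because the Gronwall coefficient contains $\|r_i\|^2_{H^{s'+2}}+\|u_i\|^2_{H^{s'+2}}$, which the $H^{s+1}\times H^{s}$ bounds and the dissipation control only if $s'+2\le s+1$.
\end{itemize}
Together these give precisely $s>7/2$. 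Your stopping-time localization is an acceptable substitute for the paper's weight $e^{-\int_0^t g}$.

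\textbf{On your main obstacle.} The paper takes your second option: \eqref{reg} only controls $\int_0^T\theta_R(y_m)\int_{\mathbb{T}}\rho^{\alpha-1}|\partial_x^{s+1}u_m|^2\,dx\,dt$. Accordingly, drop $L^2(0,T;H^{s+1-\epsilon})$ from your path space, since its tightness would need the unweighted bound you do not have. The paper's space $C([0,T];H^{s_1})\times C([0,T];H^{s_2})$ with $s_1>7/2$, $s_2>5/2$ already suffices to pass to the limit in every term, including $\theta_R$.
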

\noindent
We highlight that the stochastic compactness argument used in order to deduce the existence of solutions for the approximating system \eqref{trunc local eq r}-\eqref{trunc local eq u} does not keep track of the original stochastic setting and thus it gives rise to solutions which are defined on a new probability space,  namely martingale solutions. On the other hand, by virtue of the pathwise uniqueness result, it is possible to recover the original stochastic basis by using the Gyongy-Krylov method.  This procedure however requires the additional regularity $s>7/2$ which is not needed in the deterministic setting $\mathbb{F}=0.$ 
\subsection{Second level of approximation}
We use a Galerkin approximation technique.  To be precise, let $H_m$ be the space of the trigonometric polynomials of order $m$ $$ H_m= \bigg\{ v= \sum_{m,\max_{j=1,2,3} |m_j| \le m}^{}[ a_m \cos(\pi m \cdot x)+b_m \sin (\pi m\cdot x)] \; | \; a_m, b_m \in \mathbb{R}\bigg\}$$ having $(w_m)_{m\in \mathbb{N}}$ as orthonormal basis, we define the associated $L^2$- orthogonal projection $${\Pi}_m : L^2 (\mathbb{T}) \longrightarrow H_m.$$
providing the following standard conditions
\begin{equation}
\| {\Pi}_m [f] \|_{W^{k,p}} \le c(k,p) \| f \|_{W^{k,p}},
\end{equation}
\begin{equation}
{\Pi}_m [f] \longrightarrow f \ \text{in} \ W^{k,p}(\mathbb{T}) \ \text{as} \; m \rightarrow \infty, \; \text{for all} \ 1< p < \infty,
\end{equation}
whenever $ f\in W^{k,p}.$
\\
\\
Then we set $r_m={\Pi}_m [r], \, u_m={\Pi}_m [u]$ and we define the projected system of \eqref{trunc local eq r}-\eqref{trunc local eq u}  on $H_m$ as follows 
\begin{equation} \label{Gal r}
\text{d}\langle r_m, w_i \rangle+ \theta_R(y_m)\langle u_m\partial_x r_m,w_i \rangle dt+ \theta_R(y_m)\langle \mu'_k(\rho)\partial_x u_m,w_i \rangle dt=0 
\end{equation}
\begin{equation} \label{Gal u}
\begin{split}
& \text{d}\langle u_m,w_i \rangle+ \theta_R(y_m)\big[\langle u_m\partial_x u_m,w_i \rangle + \big\langle \dfrac{\partial_x p(\rho_m)}{\rho_m}, w_i\big\rangle \big]dt =  \theta_R(y_m)\bigg[\big\langle\dfrac{\partial_x (\mu(\rho_m)\partial_x u_m)}{\rho_m}, w_i\big\rangle\bigg]dt \\ & + \theta_R(y_m) \big[ \langle \partial_x( \mu'_k(\rho_m) \partial_{xx} r_m),w_i \rangle  + \langle \partial_x r_m \partial_{xx} r_m, w_i \rangle \big]dt + \theta_R(y_m)\langle \mathbb{F}(\rho_m,u_m),w_i \rangle dW.
\end{split}
\end{equation}
with initial datum $(r_m(0),u_m(0))=(\Pi_m r_0, \Pi_m u_0).$
\\
\\
The solvability of the first level of the approximation scheme \eqref{Gal r}-\eqref{Gal u} is standard and can be obtained by by means of a Banach fixed point argument.  
For completeness we define $$\mathfrak{B}= L^2(\Omega; C([0,T^*];H_m)), \quad 
\mathcal{F}: \mathfrak{B}\times\mathfrak{B} \longrightarrow \mathfrak{B}\times \mathfrak{B}
$$
with $\mathcal{F}(r_m,u_m)=(\mathcal{F}_1(r_m,u_m), \mathcal{F}_2(r_m,u_m))$ satisfying 
\begin{equation}\label{F1 trunc}
\begin{split}
\langle \mathcal{F}_1(r_m,u_m)(\tau), w_i \rangle &= \langle r_m(0),w_i \rangle-\int_{0}^{\tau} \theta_R(y_m)\langle u_m\partial_x r_m,w_i \rangle dt \\ & -\int_{0}^{\tau} \theta_R(y_m)\langle \mu'_k(\rho)\partial_x u_m,w_i \rangle dt,
\end{split}
\end{equation}
\begin{equation}\label{F2 trunc}
\begin{split}
& \langle \mathcal{F}_2(r_m,u_m)(\tau),  w_i \rangle= \langle u_m(0),w_i \rangle-\int_{0}^{\tau} \theta_R(y_m) \langle u_m\partial_x u_m,w_i \rangle dt -\int_{0}^{\tau} \theta_R(y_m) \big\langle \dfrac{\partial_x p(\rho_m)}{\rho_m}, w_i\big\rangle dt \\ & + \int_{0}^{\tau}\theta_R(y_m) \big\langle \dfrac{\partial_x (\mu(\rho_m)\partial_x u_m)}{\rho_m}, w_i\big\rangle dt + \int_{0}^{\tau} \theta_R(y_m)  \langle \partial_x( \mu'_k(\rho_m) \partial_{xx} r_m),w_i \rangle dt \\ & +\int_{0}^{\tau} \theta_R(y_m)\langle \partial_x r_m \partial_{xx} r_m, w_i \rangle dt + \int_{0}^{\tau} \theta_R(y_m)\langle \mathbb{F}(\rho_m,u_m),w_i \rangle dW.
\end{split}
\end{equation}
Then,  for $(r^1_m,u^1_m)$ and $(r^2_m,u^2_m)$ being two different solutions of \eqref{Gal r}-\eqref{Gal u} and by denoting by $\mathcal{F}_{det}$ and $\mathcal{F}_{sto}$ the deterministic and stochastic integrals respectively,  since all norms defined on the finite dimensional space $H_m$ are equivalent, we easily deduce $$\| \mathcal{F}(r^1_m,u^1_m)-\mathcal{F}(r^2_m,u^2_m) \|^2_B \le \|\mathcal{F}_{det}(r^1_m,u^1_m)-\mathcal{F}_{det}(r^2_m,u^2_m)\|^2_B+ \|\mathcal{F}_{sto}(r^1_m,u^1_m)-\mathcal{F}_{sto}(r^2_m,u^2_m))\|^2_B.$$
and 
\begin{equation}
\| \mathcal{F}_{det}(r^1_m,u^1_m)-\mathcal{F}_{det}(r^2_m,u^2_m) \|^2_B \le T^*C(m,R,T) \| (r^1_m,u^1_m)-(r^2_m,u^2_m)\|^2_B,
\end{equation}
while concerning the stochastic part, with similar lines of argument with respect to the previous section, we us Burkholder-Davis-Gundy's inequality to deduce
\begin{equation}
\begin{split}
 \| \mathcal{F}_{sto}(r^1_m,u^1_m)-\mathcal{F}_{sto}(r^2_m,u^2_m)\|^2_B & = \mathbb{E} \ \sup_{[0,T^*]} \bigg{ \| } \int_{0}^{t} (\theta_R( y^1_m) \mathbb{F}(\rho^1_m,u^1_m)-(\theta_R(y^2_m) \mathbb{F}(\rho^2_m,u^2_m) \text{d}W \bigg{ \| }^2_{H_m}  \\ & \le T^*C(m,R,T) \| (r^1_m,u^1_m)-(r^2_m,u^2_m)\|^2_B.
\end{split}
\end{equation}
We choose $C(m,R,T)$ in such a way $T^*C(m,R,T)< 1.$ Therefore $\mathcal{F}$ is a contraction for a deterministic small time $T^*>0$  and by using Banach fixed point Theorem we get the existence of a unique solution in the interval $[0,T^*].$ To infer the existence of a solution in $[0,T]$ we provide a decomposition into small subintervals and we patch the corresponding solutions together.
\\
\\
In order to study the limit $ m\rightarrow \infty$ we need uniform estimates with respect to the parameter $m.$
To this purpose, let $l$ be a multi-index such that $|l| \le s,$ we apply $\partial^{l+1}_x$ to \eqref{Gal r} and then we test with $\partial_x^{l+1}r_m,$
while concerning \eqref{Gal u} we apply It\^{o} formula and by recalling the invariance of $H_m$ with respect to the spatial derivative we deduce 
\begin{equation}\label{Uniform est m}
\begin{split}
& \dfrac{1}{2}\text{d} \int_{\mathbb{T}} \bigg( | \partial_x^{l+1} r_m|^2+| \partial_x^l u_m |^2 \bigg) dx= -\theta_R(y_m)\int_{\mathbb{T}}[ \partial_x^{l+1}(u_m\partial_x r_m)\partial_x^{l+1}r_m dxdt \\ & -\theta_R(y_m) \int_{\mathbb{T}} \partial_x^l(u_m \partial_x u_m)\partial_x^l u_m dxdt - \theta_R(y_m) \int_{\mathbb{T}}\partial_x^l\bigg( \dfrac{\partial_x p(\rho_m)}{\rho_m} \bigg)\partial_x^l u_m dxdt \\ & + \theta_R(y_m) \int_{\mathbb{T}} \partial_x^l \bigg( \dfrac{\partial_x (\mu(\rho_m)\partial_x u_m)}{\rho_m} \bigg)\partial_x^l u_m  dt + \theta_R(y_m)\int_{\mathbb{T}} \partial_x^{l+1}( \mu'_k(\rho_m) \partial_{xx} r_m) \partial_x^l u_m dxdt \\ & + \theta_R(y_m) \int_{\mathbb{T}} \partial_x^l (\partial_x r_m \partial_{xx} r_m) \partial_x^l u_m dxdt + \theta_R(y_m) \int_{\mathbb{T}} \partial_x^l \mathbb{F}(\rho_m,u_m) \partial_x^l u_m dxdW\\ & +\dfrac{1}{2} \sum_{k=1}^{\infty} \int_{\mathbb{T}} \theta_R( y_m) | \partial_x^l F_k(\rho_m,u_m) |^2 dxdt
\end{split}
\end{equation}
The estimates of the right hand-side of \eqref{Uniform est m} follows the same lines of argument with respect to the proof of the high order derivative estimates obtained in Section \ref{Sec3}. Note that by virtue of the equivalence of the norm in the finite dimensional space $H_m$ and the cut-off operator $\theta_R(y_m),$ here the scenario is significantly easier. The proof of the following estimate is therefore omitted
\begin{equation}\label{reg}
\begin{split}
& \mathbb{E} \bigg[ \frac{1}{2}\sup_{[0,T]} \bigg( \| r_m \|^2_{H^{s+1}}+ \| u_m\|^2_{H^{s}} \bigg)+ \int_{0}^{T} \theta_R(y_m) \int_{\mathbb{T}} \rho^{\alpha-1}| \partial_x^{s+1}u_m|^2 dxdt \bigg]^p  \\ & \le c(R,T,s) \mathbb{E} \bigg[ \| r_0 \|_{H^{s+1}}^{2p}+ \| u_0 \|_{H^s}^{2p}+1 \bigg].
\end{split}
\end{equation}
\subsection{Stochastic compactness}
The aim of this subsection is to perform the limit $m \rightarrow \infty.$
\\
We start by defining the following path space 
$\mathcal{X}=\mathcal{X}_r\times\mathcal{X}_u\times\mathcal{X}_W$
where $$\mathcal{X}_r=C([0,T];H^{s_1}(\mathbb{T}), \ \mathcal{X}_u=C([0,T];H^{s_2}(\mathbb{T}),\ \mathcal{X}_W=C([0,T];\mathfrak{U}_0) $$
with $s_1 < s+1$ such that $s_1 > \frac{7}{2}$ and $s_2< s$ such that $s_2> \frac{5}{2}.$ Note that the conditions $s_i > \frac{5}{2}= \frac{n}{2}+2$ is needed in order to pass to the limit in the cut-off operator $\theta_R.$
Then we denote by $\mathcal{L}[r_m],\ \mathcal{L}[u_m],\ \mathcal{L}[W]$ the law of $r_m, u_m,  W$ respectively and we infer the following Lemma
\begin{lemma}\label{Tightness}
The following results hold
\begin{itemize}
\item[1)]
The set $\{ \mathcal{L}[r_m]; \ m\in \mathbb{N} \}$ is tight on $\mathcal{X}_\psi$
\item[2)]
The set $\{ \mathcal{L}[u_m]; \ m\in \mathbb{N} \}$ is tight on $\mathcal{X}_u$
\item[3)]
The set $\{ \mathcal{L}[W] \}$ is tight on $\mathcal{X}_W$
\item [4)]
The joint law $\{\mathcal{L}[r_m,u_m,W]; \ m\in \mathbb{N} \}$ is tight on $\mathcal{X}.$ 
\end{itemize}
\end{lemma}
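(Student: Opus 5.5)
We prove the four claims with the standard tightness toolkit of \cite{Feir}: a compact embedding for the spatial regularity, and an Aldous-type or H\"older-in-time criterion for the temporal regularity. Items 3) and 4) are then immediate. $\mathcal{L}[W]$ is a single Radon measure on the Polish space $C([0,T];\mathfrak{U}_0)$, hence tight. A finite product of tight families is tight on the product space, since $K_r\times K_u\times K_W$ is compact and $\mathbb{P}\big((r_m,u_m,W)\notin K_r\times K_u\times K_W\big)$ is bounded by the sum of the three marginal probabilities. The work therefore lies in items 1) and 2).

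For 1), we use the uniform bound \eqref{reg}, which gives $\mathbb{E}\sup_{t}\|r_m\|^{2p}_{H^{s+1}}\le C$ uniformly in $m$. For time regularity we read off from \eqref{Gal r} that
\begin{equation*}
\partial_t r_m=-\Pi_m\big[\theta_R(y_m)(u_m\partial_x r_m+\mu'_k(\rho_m)\partial_x u_m)\big].
\end{equation*}
Here $u_m\in L^2_tH^{s+1}_x$ and $r_m\in L^\infty_tH^{s+1}_x$, and $H^s$ is an algebra since $s>1/2$. The Moser estimate applies to $\mu'_k(\rho_m)$, which is a smooth function of $r_m$ because $r_m$ stays in a bounded set where the cut-off is active. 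Together these give $\mathbb{E}\|\partial_t r_m\|^{p}_{L^2(0,T;H^{s})}\le C$. Consequently $r_m$ is bounded in $L^p(\Omega;C^{1/2}([0,T];H^{s}))$. We then take the set
\begin{equation*}
K_L=\{\|r\|_{L^\infty_tH^{s+1}_x}\le L,\ \|r\|_{C^{1/2}_tH^{s}_x}\le L\}.
\end{equation*}
By Arzel\`a--Ascoli combined with the compact embedding $H^{s+1}\hookrightarrow H^{s_1}$ (via interpolation between $H^{s+1}$ and $H^s$), $K_L$ is relatively compact in $C([0,T];H^{s_1})$. The Markov inequality gives $\mathbb{P}(r_m\notin K_L)\le C/L$.

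For 2), we split $u_m=u_m(0)+Y_m+Z_m$ into the deterministic drift part $Y_m$ and the stochastic integral $Z_m$. The initial datum $\Pi_m u_0\to u_0$ in $H^s$, so its laws are tight. For the drift, each term in \eqref{Gal u} is bounded in $L^2(0,T;H^{s-1})$ in expectation. The dangerous terms are the third-order capillarity term $\partial_x(\mu'_k\partial_{xx}r_m)\in H^{s-2}$ and the viscous term $\partial_x(\mu\partial_x u_m)/\rho_m\in L^2_tH^{s-1}$. So we measure the drift in the weaker space $H^{s-2}$ and obtain $Y_m$ bounded in $L^p(\Omega;W^{1,2}(0,T;H^{s-2}))\cap L^p(\Omega;L^\infty(0,T;H^s))$. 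For the stochastic integral we use the Burkholder--Davis--Gundy inequality together with \eqref{F2}, which gives $\mathbb{E}\|Z_m(t)-Z_m(\tau)\|^{q}_{H^{s}}\lesssim |t-\tau|^{q/2}$. The Kolmogorov criterion then places $Z_m$ in $L^q(\Omega;C^{\vartheta}([0,T];H^{s}))$ for some $\vartheta<1/2-1/q$. Interpolating the H\"older bound in $H^{s-2}$ with the $L^\infty_tH^s$ bound yields equicontinuity in $H^{s_2}$ for any $s_2<s$. Arzel\`a--Ascoli together with the compact embedding $H^s\hookrightarrow H^{s_2}$, followed by Markov's inequality, gives tightness on $\mathcal{X}_u$.

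The main obstacle is the loss of derivatives in the drift of the $u$ equation. The Korteweg term costs three derivatives on $r$, so time regularity of $u_m$ is only available in $H^{s-2}$, and the step that fails if done naively is recovering tightness in $C_tH^{s_2}$ with $s_2>5/2$. This requires the interpolation between the H\"older-in-time bound in $H^{s-2}$ and the uniform bound in $H^{s}$. We would check it first using the inequality $\|f\|_{H^{s_2}}\le\|f\|^{\lambda}_{H^{s-2}}\|f\|^{1-\lambda}_{H^s}$ with $\lambda=(s-s_2)/2$, which gives a H\"older modulus $|t-\tau|^{\vartheta\lambda}$ whose constant has bounded moments. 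The constraint $s>7/2$ leaves room to choose $s_2\in(5/2,s)$, as required. A related point is that $\mu'_k(\rho_m)$ and $\mu(\rho_m)/\rho_m$ are smooth only away from vacuum. Here we rely on the cut-off $\theta_R$ and the lower bound on $r$ from Theorem \ref{well posedness trunc system}, which keep $\rho_m$ in a compact subset of $(0,\infty)$ whenever $\theta_R(y_m)\neq0$.
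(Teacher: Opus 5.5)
Your proof is correct and follows essentially the paper's route: relatively compact sets given by a uniform-in-time bound in the top Sobolev norm plus a H\"older-in-time bound in a weaker norm (Arzel\`a--Ascoli), Markov's inequality via \eqref{reg} and the drift/stochastic-integral splitting, tightness of the single Radon measure $\mathcal{L}[W]$, and a product of compacts for the joint law. The paper takes $C^k_tL^2_x$ as the weaker norm and cites \cite{Feir}, Proposition 5.2.5, whereas you use $H^{s-2}$ (resp.\ $H^{s}$ for $r_m$) and make the interpolation explicit; one small fix is that the lower bound on $r$ in Theorem \ref{well posedness trunc system} concerns the limit solution, whose construction relies on this very lemma, so you cannot cite it for $\rho_m$ and should instead treat control of $\rho_m$ away from vacuum on $\{\theta_R(y_m)\neq 0\}$ as part of the uniform Galerkin bounds, which is what the paper tacitly assumes in \eqref{reg}.
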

\begin{proof} 
We start with the proof of the tightness of $\mathcal{L}[u_m].$
\\
\\
For any $L>0$ we define the set 
\begin{equation*}
B_L= \{ u \in C([0,T];H^s(\mathbb{T}) \cap C^{k}([0,T];L^2(\mathbb{T})): \; \| u\|_{C_tH^s_x}+ \| u \|_{C^k_t L^2_x}  \le L \}
\end{equation*}
and we observe that due to the compact embedding 
\begin{equation}\label{emb}
C([0,T];H^s(\mathbb{T}) \cap C^{k}([0,T];L^2(\mathbb{T})) \overset{C}{\hookrightarrow} C([0,T];H^{s_2}(\mathbb{T})), \quad k>0,\; s_2<s,
\end{equation}
which follows directly from Ascoli Arzelà theorem,  then $B_L$ is relatively compact in $\mathcal{X}_u.$ On the other hand by using Markov inequality 
\begin{equation}\label{tight proof}
\mathcal{L}[u_m](B^c_L)= \mathbb{P} \big( \| u_m\|_{C_tH^s_x}+ \| u_m\|_{C^k_t L^2_x} > L \big) \le \frac{1}{L} \mathbb{E} \big( \| u_m\|_{C_tH^s_x}+ \| u_m\|_{C^k_t L^2_x} \big)
\end{equation}
and recalling \eqref{reg} together with the standard estimate 
\begin{equation*}
\mathbb{E} \big( \| u^d_m \|_{C^k_t L^2_x}+\| u^s_m \|_{C^k_t L^2_x} \big) \le C(R)
\end{equation*}
for the decomposition $u_m= u^d_m + u^s_m$ in terms of deterministic integrals and the stochastic integral, see also \cite{Feir}, Proposition 5.2.5 for completeness, we take $L$ large enough in \eqref{tight proof} to deduce our claim.
The proof for $\mathcal{L}[r_m]$ follows the same lines of argument with respect to the proof of the tightness for $\mathcal{L}[u_m]$ by replacing $s_2$ with $s_1.$ Finally,  $\mathcal{L}[W]$ is tight since it is a Radon measure on a Polish space $\mathcal{X}_W$ and the tightness of the joint law follows by Tychonoff theorem.
\end{proof}
\noindent
The next result is a fundamental convergence lemma which follows by a direct application of Prokhorov and Skorokhod Theorem. 
\begin{lemma}\label{conv stoch}
There exists a complete probability space $(\tilde{\Omega}, \tilde{\mathcal{F}},\tilde{\mathbb{P})}, $ with $\mathcal{X}$-valued Borel-measurable random variables $(\tilde{r}_m,\tilde{u}_m,\tilde{W}_m),  m\in \mathbb{N},$ and $(\tilde{r},\tilde{u},\tilde{W}), $ such that up to a subsequence:
\begin{itemize}
\item[(1)]
the law of $(\tilde{r}_m,\tilde{u}_m,\tilde{W}_m), $ is given by $\mathcal{L}[r_m,u_m,W]; \ m\in \mathbb{N};$
\item[(2)] 
the law of $(\tilde{r},\tilde{u},\tilde{W})$ is a Radon measure;
\item[(3)]
$(\tilde{r}_m,\tilde{u}_m,\tilde{W}_m), $ converges $\tilde{\mathbb{P}}-$a.s. to $(\tilde{r},\tilde{u},\tilde{W})$ in the topology of $\mathcal{X},$ namely
\begin{equation*}
\begin{split}
& \tilde{r}_m \rightarrow \tilde{r} \quad  \, in \ C([0,T]; H^{s_1}(\mathbb{T})), \\ & \tilde{u}_m \rightarrow \tilde{u} \quad in \ C([0,T]; H^{s_2}(\mathbb{T})), \\ & 
\tilde{W}_m \rightarrow \tilde{W} \, in \; C([0,T]; \mathfrak{U}_0),
\end{split}
\end{equation*}
\end{itemize}
as $m \rightarrow \infty \ \tilde{\mathbb{P}}-$a.s.
\end{lemma}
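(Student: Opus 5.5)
The plan is a direct combination of Prokhorov's theorem and the Skorokhod representation theorem, applied to the joint laws from Lemma \ref{Tightness}. The first step is to check that the path space $\mathcal{X}=\mathcal{X}_r\times\mathcal{X}_u\times\mathcal{X}_W$ is Polish. Each factor is a space of continuous functions from $[0,T]$ into a separable Hilbert space ($H^{s_1}(\mathbb{T})$, $H^{s_2}(\mathbb{T})$, $\mathfrak{U}_0$) with the sup norm, so it is separable and complete, and finite products of Polish spaces are Polish. I would also verify that $(r_m,u_m,W)$ are $\mathcal{X}$-valued Borel random variables. This follows because the Galerkin solutions have continuous trajectories in $H_m\subset H^{s_1}\cap H^{s_2}$, and $W$ has $C([0,T];\mathfrak{U}_0)$ paths, as recalled in Section \ref{Sec2}.

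Next, by item 4) of Lemma \ref{Tightness}, the family $\{\mathcal{L}[r_m,u_m,W]\}_{m\in\mathbb{N}}$ is tight on $\mathcal{X}$. Prokhorov's theorem then gives a subsequence, not relabelled, converging weakly to a Borel probability measure $\mu$ on $\mathcal{X}$. Since $\mathcal{X}$ is Polish, every Borel probability measure on it is Radon, which gives item (2).

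I would then apply the Skorokhod representation theorem in its classical form on Polish spaces, taking as target space the interval $[0,1]$ with Lebesgue measure (or any complete probability space obtained by completion). This produces $\mathcal{X}$-valued random variables $(\tilde r_m,\tilde u_m,\tilde W_m)$ with laws $\mathcal{L}[r_m,u_m,W]$, which is item (1). It also produces a limit $(\tilde r,\tilde u,\tilde W)$ with law $\mu$ such that $(\tilde r_m,\tilde u_m,\tilde W_m)\to(\tilde r,\tilde u,\tilde W)$ $\tilde{\mathbb{P}}$-a.s. in $\mathcal{X}$. Convergence in the product topology is exactly componentwise convergence in $C([0,T];H^{s_1})$, $C([0,T];H^{s_2})$ and $C([0,T];\mathfrak{U}_0)$, which gives item (3).

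The only point needing care, and the main (mild) obstacle, is the completeness of $(\tilde\Omega,\tilde{\mathcal{F}},\tilde{\mathbb{P}})$. I would handle it by completing the Lebesgue $\sigma$-algebra, which does not affect the laws or the a.s. convergence. If needed later for the identification of the limit, I would also note that, since the law is preserved, each $\tilde W_m$ is a cylindrical Wiener process (with respect to its canonical filtration) and the uniform bounds \eqref{reg} transfer to $(\tilde r_m,\tilde u_m)$. These facts are not part of the statement, though.
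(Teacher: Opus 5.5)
Your proposal is correct and takes the same route as the paper: tightness of the joint laws on the Polish space $\mathcal{X}$ (Lemma \ref{Tightness}), then Prokhorov's theorem to extract a weakly convergent subsequence, then the Skorokhod representation theorem to realize it as an a.s.\ convergent sequence on a new complete probability space. Your additional checks on Polishness, the Radon property of the limit law, and completion of the probability space make explicit what the paper leaves implicit.
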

\begin{proof}
Since $\mathcal{X}$ is a Polish space and $\{\mathcal{L}[r_m,u_m,W]; \ m\in \mathbb{N} \}$ is tight on $\mathcal{X},$ then by using Prokhorov's theorem,  see \cite{Feir}, Theorem 2.6.1, we have that the collection of measures $\{\mathcal{L}[r_m,u_m,W]\}$ is relatively weak compact. Therefore, the results in Lemma \ref{conv stoch} follows by applying Skorokhod Theorem, Theorem 2.6.2 in \cite{Feir}, which gives a representation of the limit as the law of suitable random variables defined on a new probability space.
\end{proof}
\noindent
The identification of the limit as a strong martingale solution of \eqref{trunc local eq r}-\eqref{trunc local eq u} relies on measurability properties of $(\tilde{\rho}, \tilde{u}, \tilde{W})$ and uniform moment estimates. Specifically, since $\tilde{r}$ and $\tilde{u}$ are stochastic processes in the classical sense, with continuous trajectories,  then with standard arguments we infer they are progressively measurable with respect to the canonical filtration generated by $[ \tilde{r}, \tilde{u}, \tilde{W}]$ i.e. 
\begin{equation}
\tilde{\mathcal{F}}_t:= \sigma \bigg( \sigma_t [\tilde{r}] \cup \sigma_t [\tilde{u}] \cup \bigcup_{k=1}^{\infty} \sigma_t [\tilde{W}_k] \bigg), \quad t \in [0,T].
\end{equation}
\\
Nevertheless, the process $\tilde{W}$ is a cylindrical Wiener process with respect to its canonical filtration and since $\tilde{\mathcal{F}}_t$ is non-anticipative with respect to $\tilde{W}$ then we deduce that $\tilde{W}$ is also an $\tilde{\mathcal{F}}_t$-Cylindrical Wiener process. We refer the reader to \cite{Feir}, Section 5.2.4 for further details.
To conclude, we observe that by virtue of Theorem 2.9.1 in \cite{Feir}, \eqref{Uniform est m}, Proposition \ref{conv stoch} and Lemma 2.6 in \cite{Feir}, the limit $(\tilde{r}, \tilde{u}, \tilde{W})$ solves \eqref{trunc local eq r}-\eqref{trunc local eq u} on the new probability space. This concludes the existence of martingale solutions in Theorem \ref{well posedness trunc system}.
The strong continuity in time of the solution then follows $(\tilde{r}, \tilde{u}),$ by using Gelfand variational approach, see Theorem 2.4.3 in \cite{Feir}, in the Gelfand triplet $H^{s+1}(\mathbb{T}) \hookrightarrow H^{s}(\mathbb{T})\hookrightarrow H^{s-1}(\mathbb{T}).$
\subsection{Pathwise uniqueness}
We derive a pathwise uniqueness result. This condition will be later used in order to apply the Gyongy-Krylov lemma to recover the original probability space and thus to prove that martingale solutions are indeed pathwise solutions.  Here the condition $s> \frac{7}{2}$ will be needed.
\\
\\
Consider $(r_1,u_1)$ and $(r_2,u_2)$ two different solutions of \eqref{trunc local eq r}-\eqref{trunc local eq u}. Then the difference satisfies
\begin{equation}\label{diff1}
\begin{split}
\partial_t(r_1-r_2)+\theta_R(y_1)( u_1\partial_x r_1) -\theta_R(y_2)( u_2\partial_x r_2) = &  -\theta_R(y_1)(\mu'_k(\rho_1)\partial_x u_1) \\ & +\theta_R(y_2)(\mu'_k(\rho_2)\partial_x u_2)
\end{split}
\end{equation}
\begin{equation}\label{diff2}
\begin{split}
 \text{d}(u_1-u_2)=& -\theta_R(y_1)(u_1\partial_x u_1)dt +\theta_R(y_2)(u_2\partial_x u_2)dt -\theta_R(y_1)\bigg( \frac{\partial_x p(\rho_1)}{\rho_1}\bigg)dt \\ & + \theta_R(y_2)\bigg( \frac{\partial_x p(\rho_2)}{\rho_2}\bigg)dt +\theta_R(y_1)\bigg(\frac{\partial_x(\mu(\rho_1)\partial_x u_1)}{\rho_1} \bigg)dt \\ & -\theta_R(y_2)\bigg(\frac{\partial_x(\mu(\rho_2)\partial_x u_2)}{\rho_2} \bigg)dt +\theta_R(y_1)(\partial_x(\mu'_k(\rho_1) \partial_{xx} r_1)dt \\ & -\theta_R(y_2)(\partial_x(\mu'_k(\rho_2) \partial_{xx} r_2)dt +\theta_R(y_1)(\partial_x r_1 \partial_{xx}r_1)dt \\ & -\theta_R(y_2)(\partial_x r_2 \partial_{xx}r_2)dt +\theta_R(y_1) \mathbb{F}(\rho_1,u_1)\text{d}W-\theta_R(y_2) \mathbb{F}(\rho_2,u_2)\text{d}W.
\end{split}
\end{equation}
Therefore, similarly to the proof of Proposition \ref{Prop h2 est} and Proposition \ref{Prop global s+1 s},  after a tedious but straightforward computation we deduce the following balance 
\begin{align*}\label{eq path uniq}
& \frac{1}{2}\text{d}\int_{\mathbb{T}}| \partial _{x}^{l +1}(r _{1}-r_{2})| ^{2}+ | \partial_x^{l}(u_1-u_2)|^2 dx=- \theta_R(y_1)\int_{\mathbb{T}}\partial^{l+1}_x( u_1\partial_x r_1)\partial^{l+1}_x(r_1-r_2)dxdt \\ & +\theta_R(y_2)\int_{\mathbb{T}}\partial^{l+1}_x( u_2\partial_x r_2)\partial^{l+1}_x(r_1-r_2)dxdt -\theta_R(y_1)\int_{\mathbb{T}}\partial^{l+1}_x(\mu'_k(\rho_1)\partial_x u_1)\partial^{l+1}_x(r_1-r_2)dxdt \\ & +\theta_R(y_2)\int_{\mathbb{T}}\partial^{l+1}_x(\mu'_k(\rho_2)\partial_x u_2)\partial^{l+1}_x(r_1-r_2)dxdt -\theta_R(y_1)\int_{\mathbb{T}}\partial^l_x(u_1\partial_x u_1)\partial^l_x(u_1-u_2)dxdt \\ & + \theta_R(y_2)\int_{\mathbb{T}}\partial^l_x(u_2\partial_x u_2)\partial^l_x(u_1-u_2)dxdt-\theta_R(y_1)\int_{\mathbb{T}} \partial^l_x\bigg( \frac{\partial_x p(\rho_1)}{\rho_1}\bigg)\partial^l_x(u_1-u_2)dxdt \\ & + \theta_R(y_2)\int_{\mathbb{T}}\partial^l_x\bigg( \frac{\partial_x p(\rho_2)}{\rho_2}\bigg)\partial^l_x(u_1-u_2)dxdt +\theta_R(y_1)\int_{\mathbb{T}}\partial^l_x\bigg( \frac{\partial_x(\mu(\rho_1)\partial_x u_1)}{\rho_1} \bigg)\partial^l_x(u_1-u_2)dxdt \\ & -\theta_R(y_2)\int_{\mathbb{T}}\partial^l_x\bigg(\frac{\partial_x(\mu(\rho_2)\partial_x u_2)}{\rho_2} \bigg)\partial^l_x(u_1-u_2)dxdt +\theta_R(y_1)\int_{\mathbb{T}}\partial^l_x(\partial_x(\mu'_k(\rho_1) \partial_{xx} r_1)\partial^l_x(u_1-u_2)dxdt \\ & -\theta_R(y_2)\int_{\mathbb{T}}\partial^l_x(\partial_x(\mu'_k(\rho_2) \partial_{xx} r_2)\partial^l_x(u_1-u_2)dxdt +\theta_R(y_1)\int_{\mathbb{T}}\partial^l_x(\partial_x r_1 \partial_{xx}r_1)\partial^l_x(u_1-u_2)dxdt \\ & -\theta_R(y_2)\int_{\mathbb{T}}\partial^l_x(\partial_x r_2 \partial_{xx}r_2)\partial^l_x(u_1-u_2)dxdt +\theta_R(y_1)\int_{\mathbb{T}} \partial^l_x \mathbb{F}(\rho_1,u_1)\partial^l_x(u_1-u_2)dx\text{d}W \\ & -\theta_R(y_2)\int_{\mathbb{T}} \partial^l_x \mathbb{F}(\rho_2,u_2)\partial^l_x(u_1-u_2)dx\text{d}W \\ & + \frac{1}{2} \sum_{k=1}^{\infty} \int_{\mathbb{T}} \big( \theta_R(y_1) \partial^l_x F_k(\rho_1,u_1)-\theta_R(y_2) \partial^l_x F_k(\rho_2,u_2) \big)^2 dxdt,
\end{align*}
which is obtained by applying the operators $\partial_x^{l+1}$ and $\partial_x^{l},$ with $|l| \le s',$ to \eqref{diff1} and to \eqref{diff2} respectively and by using properly the It\^{o} formula to the functional $F(\partial^l_x (u_1-u_2))= \frac{1}{2} \int_{} | \partial^l_x(u_1-u_2)|^2dx.$ 
\\
\\
We highlight the main elements which are needed in order to conclude the pathwise uniqueness estimate result in the sense of Theorem \ref{well posedness trunc system}.
By definition of $\theta_R$ and Sobolev embedding we have that for $s'> \frac{5}{2}$
\begin{equation*}
\begin{split}
| \theta_R(y_1)-\theta_R(y_2) | & \le C_1(R) ( \| r_1-r_2\|_{W_x^{2,\infty}} +\| u_1-u_2 \|_{W_x^{2,\infty}}) \\ & \le C_2(R) ( \| r_1-r_2\|_{H_x^{s'}} +\| u_1-u_2 \|_{H_x^{s'}}).
\end{split}
\end{equation*}
We frequently add and subtract quantities of the form $\theta_R(y_i)$ $\int_{\mathbb{T}}h(r_j,u_j) \partial^{l+1}_x(r_1-r_2)dx, \; i \neq j$ and $h(r_j,u_j)$ properly defined and we have to deal with the following type of commutator estimates
\begin{equation*}
\begin{split}
& \bigg| \theta_R(y_1)\int_{\mathbb{T}}( \partial^{l+1}_x\mu'_k(\rho_1)\partial_x u_1-\partial^{l+1}_x \mu'_k(\rho_2) \partial_x u_2) \partial^{l+1}_x(r_1-r_2)dx \bigg| \\ & =  \bigg| \theta_R(y_1) \int_{\mathbb{T}} \bigg[ \partial_x u_1( \partial^{l+1}_x ( \mu'_k(\rho_1)-\mu'_k(\rho_2) ) -\partial^{l+1}_x \mu'_k(\rho_2) \partial_x(u_1-u_2) \bigg] \partial^{l+1}_x (r_1-r_2) dx \bigg| \\ & \le  \theta_R(y_1) \| u_1\|_{W^{1,\infty}} \| r_1-r_2\|^2_{H^{s'+1}}+  \theta_R(y_1) \| r_2\|_{H^{s'+1}} \| u_1-u_2\|_{W^{1,\infty}} \| r_1-r_2\|_{H^{s'+1}} \\ & \le C(R) ( \| r_1-r_2\|^2_{H^{s'+1}} + \|r_2\|^2_{H^{s'+1}} \| u_1-u_2\|^2_{H^{s'}})
\end{split}
\end{equation*}
\begin{equation*}
\begin{split}
& \bigg| \theta_R(y_1)\int_{\mathbb{T}}( u_1\partial_x \partial^{l+1}_xr_1-u_2 \partial_x \partial^{l+1}_x r_2) \partial^{l+1}_x(r_1-r_2)dx \bigg| \\ & =  \bigg| \theta_R(y_1) \int_{\mathbb{T}} \bigg[ u_1 \partial_x \partial^{l+1}_x (r_1-r_2) + \partial_x \partial^{l+1}_x r_2 (u_1-u_2) \bigg] \partial^{l+1}_x (r_1-r_2) dx \bigg| \\ & \le  \theta_R(y_1) \| u_1\|_{W^{1,\infty}} \| r_1-r_2\|^2_{H^{s'+1}}+  \theta_R(y_1) \| r_2\|_{H^{s'+2}} \| u_1-u_2\|_{L^\infty} \| r_1-r_2\|_{H^{s'+1}} \\ & \le C(R) ( \| r_1-r_2\|^2_{H^{s'+1}} + \|r_2\|^2_{H^{s'+2}} \| u_1-u_2\|^2_{H^{s'}})
\end{split}
\end{equation*}
Similarly to the proof of Proposition \ref{Prop h2 est} and Proposition \ref{Prop global s+1 s}, by virtue of the use of the new variables $(r,u),$ several cancellations between the high-order terms occur.
Additionally, the viscosity terms give rise to some integrals having an appropriate sign and they therefore  provide dissipation properties. Specifically, we have that 
\begin{equation*}
\begin{split}
\theta_R(y_1)\int_{\mathbb{T}}\partial_x^{l} \bigg( \frac{\mu(\rho_1)}{\rho_1} \partial_{xx} u_1 \bigg) \partial^l_x (u_1-u_2)  & =  \theta_R(y_1)\int_{\mathbb{T}}\partial_x^{l} \bigg( \frac{\mu(\rho_1)}{\rho_1} \partial_{xx} (u_1-u_2) \bigg) \partial^l_x (u_1-u_2) \\ & + \theta_R(y_1) \int_{\mathbb{T}} \partial^l_x \bigg( \frac{\mu(\rho_1)}{\rho_1} \partial_{xx} u_2 \bigg) \partial^l_x(u_1-u_2) \\ & = -\theta_R(y_1)\int_{\mathbb{T}} \bigg( \frac{\mu(\rho_1)}{\rho_1}\bigg) | \partial^{l+1}_x (u_1-u_2)|^2 + l.o.t.
\end{split}
\end{equation*}
where the lower order terms can be estimated by standard arguments and the same analysis holds for the terms involving $\theta_R(y_2).$
For completeness, we highlight that, after having added and subtracted appropriate quantities, the high order terms due to capillarity can be estimated as follows
\begin{equation*}
\begin{split}
\theta_R(y_1) \bigg| \int_{\mathbb{T}}(\mu'_k(\rho_1)-\mu'_k(\rho_2)) \partial_x^{l+3} r_2 \partial_x^{l}(u_1-u_2) dx \bigg| & \le \| r_1-r_2\|^2_{H^{s'+1}}+ \| u_1-u_2\|^2_{H^{s'}} \| r_2 \|^2_{H^{s'+2}} \\ & + \delta \| u_1-u_2\|^2_{H^{s'+1}},
\end{split}
\end{equation*}
\begin{equation*}
\begin{split}
& \theta_R(y_1)\bigg| \int_{\mathbb{T}} \partial_x^l(\partial_x r_1 \partial_{xx} r_1) \partial^l_x(u_1-u_2)-\partial_x^l(\partial_x r_2 \partial_{xx} r_2) \partial^l_x(u_1-u_2) dx \bigg| \\ & = \theta_R(y_1)\bigg| \int_{\mathbb{T}} \partial_x^l(\partial_x r_1 \partial_{xx} (r_1-r_2)) \partial^l_x(u_1-u_2)-\partial_x^l(\partial_x (r_1-r_2) \partial_{xx} r_2) \partial^l_x(u_1-u_2) dx \bigg| \\ & \le c(R) \big( \| r_1-r_2\|^2_{H^{s'+1}}+ \| r_2\|^2_{H^{s'+2}} \| u_1-u_2\|^2_{H^{s'}}+ \delta \| u_1-u_2\|^2_{H^{s'+1}} \big).
\end{split}
\end{equation*}
\\
Therefore we end up with the following inequality
\begin{equation*}
\begin{split}
& \text{d}( \| r_1-r_2\|_{H^{s'+1}}^2+ \| u_1-u_2 \|_{H^{s'}}^2 )  \le g(t) ( \| r_1-r_2\|_{H^{s'+1}}^2+ \| u_1-u_2 \|_{H^{s'}}^2 )dt + \mathcal{R}_1\text{d}W-\mathcal{R}_2\text{d}W,
\end{split}
\end{equation*}
where $$g(t)= C(R)( 1+ \| r_1 \|_{H^{s'+2}}^2 + \| r_2 \|_{H^{s'+2}}^2 + \| u_1 \|_{H^{s'+2}}^2+ \| u_2 \|_{H^{s'+2}}^2),$$
for $s' > \frac{5}{2}$ and 
\begin{equation*}
\mathcal{R}_i= \int_{\mathbb{T}}\sum_{ |l | \le s'} \theta(y_i) \partial_x^{l} \mathbb{F}( \rho_i,u_i) \partial_x^{\beta} (u_1-u_2) dx.
\end{equation*}
Since $g \in L^1(0,T),$ we use It\^{o} product rule to deduce 
\begin{equation}\label{exp path uniq}
\begin{split}
& \text{d} [ e^{-\int_{0}^{t} g(\sigma) d\sigma} ( \|r_1-r_2 \|_{H^{s'+1}}^2 + \| u_1-u_2 \|_{H^{s'}}^2)] \le e^{-\int_{0}^{t} g(\sigma) d\sigma}(\mathcal{R}_1\text{d}W-\mathcal{R}_2\text{d}W)
\end{split}
\end{equation}
and integrating on $[0,t]$ and taking the expectation we have that the stochastic integrals vanishes in expectation. Therefore, since 
\begin{equation*}
\mathbb{P}(r_0^1=r_0^2,  u_0^1=u_0^2)=1,  
\end{equation*}
then
\begin{equation}
\mathbb{E} \big[e^{-\int_{0}^{t} g(\sigma) d\sigma}( \| r_1-r_2\|_{H^{s'+1}}^2+ \| u_1-u_2 \|_{H^{s'}}^2 )\big]=0.
\end{equation}
To conclude we observe that $e^{.-\int_{0}^{t} G(\sigma) d\sigma} > 0 \quad \mathbb{P}-a.s$
and the solutions $r_i,u_i$ have continuous trajectories in $H^{s'}(\mathbb{T})\; a.s. ,$ thus pathwise uniqueness in the sense of Theorem \ref{well posedness trunc system} holds.
\subsection{Solvability of the first level of approximation}
We claim the existence of a strong pathwise solution in the sense of Definition \ref{strong path sol}. Specifically, by virtue of the pathwise uniqueness result and the existence of a strong martingale solutions, we make use of the Gyongy-Krylov characterization of convergence in probability, Lemma 2.10.1 in \cite{Feir}, in order to prove the existence of a pathwise solution. 
\\
\\
Let us consider an initial datum satisfying \eqref{C.I STRONG} for $s> \frac{7}{2}.$  Then, for $(r_m, \, u_m),\, (r_n, \, u_n)$ being two solutions of \eqref{Gal r}-\eqref{Gal u},  by using similar arguments with respect to the proof of Lemma \ref{Tightness},  the join law $\mathcal{L}[r_m, \, u_m,\, r_n, \, u_n,W]$ is tight on $\mathcal{X}^{J}=\mathcal{X}_r \times \mathcal{X}_u \times \mathcal{X}_r \times \mathcal{X}_u \times \mathcal{X}_W.$ Therefore,  Skorokhod representation theorem yields the existence of a complete probability space $(\overline{\Omega}, \mathcal{\overline{F}},\overline{\mathbb{P}})$ together with a sequence of random variables, which eventually passing to a subsequence satisfy $$[\hat{r}_{m_k,} \, \hat{u}_{m_k},\, \bar{r}_{n_k}, \, \bar u_{n_k},\tilde{W}_k], \quad k \in \mathbb{N}$$
and $[\hat{r},\, \hat{u},\, \bar{r}
, \, \bar u_,\tilde{W}]$ such that $$[\hat{r}_{m_k,} \, \hat{u}_{m_k},\, \bar{r}_{n_k}, \, \bar{u}_{n_k},\tilde{W}_k] \rightarrow [\hat{r},\, \hat{u},\, \bar{r}, \, \bar{u}_,\tilde{W}] \quad  \text{in} \; \mathcal{X}^{J}, \; \mathbb{\overline{P}} -a.s. $$
and $$\mathcal{L}[\hat{r}_{m_k,} \, \hat{u}_{m_k},\, \bar{r}_{n_k}, \, \bar{u}_{n_k},\tilde{W}_k]=\mathcal{L}[{r}_{m_k},\, {u}_{m_k},\, r_{n_k} \, u_{n_k},{W}] \quad  \text{in} \; \mathcal{X}^{J}$$
Moreover $\mathcal{L}[r_{m_k},\, {u}_{m_k},\, r_{n_k} \, u_{n_k},{W}]$ converges weakly to the measure $\mathcal{L}[\hat{r},\hat{u},  \bar{r}, \bar{u}, \tilde{W}].$ 
Clearly,  both $[\hat{r},\hat{u}, \tilde{W}] $ and $[ \bar{r},\bar{u}, \tilde{W}]$ are strong martingale solutions to the approximate system \eqref{trunc local eq r}-\eqref{trunc local eq u} and since $\mathbb{\bar P}(\hat{r}(0)=\bar{r}(0))=1.$ and 
$\mathbb{\bar P}(\hat{u}(0)=\bar{u}(0))=1,$ we use pathwise uniqueness to deduce 
$$\mathcal{L}[ \hat{r},  \hat{u},  \bar{r}, \bar{u}]([ r_1,u_1, r_2,u_2]; [r_1,u_1]=[r_2,u_2])= \mathbb{ \bar{P}}([ \hat{r}, \hat{u}]=[ \bar{r},\bar{u}])=1$$
from which we infer that the original sequence $[r_m,u_m]$ is defined on the initial probability space $(\Omega, \mathcal{F}, \mathbb{P}),$ and it converges in probability in the topology of $\mathcal{X}_r \times \mathcal{X}_u$ to a random variable $[r,u].$ Passing to a subsequence we assume the convergence is almost surely and thus we identify the limit as the unique strong pathwise solution of the approximate problem \eqref{trunc local eq r}-\eqref{trunc local eq u}.  
\subsection{Proof of Theorem \ref{Main Theorem local}}
To avoid technical issues, we assume the following extra regularity on the initial data
\begin{equation}\label{extra hp initial data}
(\rho_0,u_0) \in L^\infty(\Omega; H^{s+1}(\mathbb{T})) \times L^\infty(\Omega; H^{s}(\mathbb{T}))), \quad \rho_0 \ge C >0.
\end{equation}
for $s \ge \frac{7}{2}.$ Hypothesis \eqref{extra hp initial data} can be then relaxed to \eqref{C.I STRONG} by means of a standard decomposition of the space $\Omega,$ see for instance \cite{Feir} and \cite{D.P.S.}.
Uniqueness of maximal strong pathwise solutions then follows by the pathwise uniqueness result given in Theorem \ref{well posedness trunc system}. In particular, given $(\rho^1,u^1,(\tau^1_R), \tau^1)$ and $(\rho^2,u^2,(\tau^2_R), \tau^2)$ being two different solutions of \eqref{main system} in the sense of Definition \ref{Def2},  with the same initial data $(\rho_0,u_0)$ then $(r^{i},u^{i})$ with $r^i. \; i=1,2$ defined by \eqref{new variables} are two solutions of \eqref{trunc local eq r}-\eqref{trunc local eq u} up to the stopping time $\tau^1_R \land \tau^2_R.$ Therefore, the pathwise uniqueness result given by Theorem \ref{well posedness trunc system} implies that 
\begin{equation*}
\mathbb{P} \big( [\rho^1,u^1](t \land \tau^1_R \land \tau^2_R )= [\rho^2,u^2](t \land \tau^1_R \land \tau^2_R ), \; \forall t \in [0,T] \big)=1.
\end{equation*}
and our claim follows by sending $R \rightarrow \infty,$ dominated convergence Theorem and the maximality of $\tau^1$ and $\tau^2.$
\\
\\
To prove the existence result of Theorem \ref{Main Theorem local} we consider a strong solution of the approximating system \eqref{trunc local eq r}-\eqref{trunc local eq u} which is denoted by $(r_R,u_R)$ and we introduce the following stopping time 
\begin{equation*}
\tau_R= \inf \{ t \in [0,T] | \; \| r_R (t) \|_{W^{2,\infty}} + \| u_R(t) \|_{W^{2,\infty}} \ge R \}, \quad \; \inf \emptyset=T,
\end{equation*}
from which we aim to recover a strong pathwise solution of the original system in the sense of Definition \ref{Def1}.
Specifically since $r_R$ and $u_R$ have continuous trajectories in $H^{s+1}$ and $H^{s}$ respectively,  they are embedded into $W^{2,\infty},$ then $\tau_R$ is a well-defined $a.s.$ strictly positive stopping time. Therefore $[r_R,u_R]$ generates the local strong pathwise solution $(\rho,u, \tau_R)$ of system \eqref{main system} with 
\begin{equation*}
\rho:= \begin{cases} 
\big( \frac{\beta+1}{2} \big)^{\frac{2}{\beta+1}} r^{\frac{2}{\beta+1}} _R, \quad \beta \neq -1, \\ 
e^{r_R},  \quad \quad \quad \quad \quad  \;  \; \beta=-1.
\end{cases}
\end{equation*}
and $u=u_R,$ with initial condition $(\rho_0,u_0).$
\\
\\
The construction of a maximal stopping time $\tau$ in the sense of Definition \ref{Def2} is quite standard in the analysis of stochastic partial differential equations.  Specifically, the same strategy has been already applied in \cite{Feir} and \cite{D.P.S.} for compressible fluid models.  Nevertheless, we highlight some elements. The set of all possible a.s. strictly positive stopping times $\mathsf{F}$ corresponding to the solution starting from the initial datum $[\rho_0,u_0]$ is non-empty and it is closed with respect to finite maximum and finite minimum operations.  Then, given $\sigma \in \mathsf{F}$ the maximal stopping time $\tau$ is defined by $\tau= \text{ess} \sup_{\sigma \in \mathsf{F}} \sigma$ and the proof follows from uniqueness of solutions and standard arguments. 
\section{Proof of the main result}\label{Sec5}
\noindent
Finally, we prove the main result of the paper Theorem \ref{Main Theorem global}. We provide an extension argument which is based on the a priori estimates derived in Section \ref{Sec3} and the properties of the unique local maximal strong pathwise solutions constructed in Section \ref{Sec4}.  For completeness, we point the reader to \cite{Deb} and \cite{Glatt-Holtz} for similar stopping time arguments used in the proof of global existence of solutions for  incompressible fluid models.
\\
\\
Assume the viscosity and capillarity exponents $\alpha$ and $\beta$ are in the range given by conditions \eqref{SCC} and \eqref{NV}. Let  $(\rho,u,(\tau_n)_{n \in \mathbb{N}},\tau)$ being the unique maximal strong pathwise solution of \eqref{main system} in the sense of Definition \ref{Def2} with initial data in the regularity class \eqref{C.I. Momenta strong}. We claim that $\tau= \infty$ a.s.
To this purpose, we recall that 
\begin{equation*}
\tau_n= \inf \{ t \in [0,\infty) | \; \| r(t) \|_{W^{2,\infty}}+ \| u(t) \|_{W^{2,\infty}} \ge n \}, \quad \inf \emptyset= \infty,
\end{equation*}
which is announcing $\tau,$ meaning that $\lim_{n \rightarrow \infty} \tau_n=\tau $ a.s. 
First we make use of the following standard decomposition 
\begin{equation*}
\{ \tau < \infty \}= \bigcup_{T=1}^{\infty}\{ \tau \le T \}= \bigcup_{T=1}^{\infty} \bigcap_{n=1}^{\infty} \{ \tau_n \le T \}.
\end{equation*}
Then by monotonicity of $\tau_n$ we have
\begin{equation*}
\mathbb{P} \big( \bigcap_{n=1}^{\infty} \{ \tau_n \le T \} \big)= \lim_{N \rightarrow \infty}\mathbb{P} \big( \bigcap_{n=1}^{N} \{ \tau_n \le T \} \big)= \lim_{N \rightarrow \infty} \mathbb{P} \big( \tau_N \le T) \big).
\end{equation*}
Hence in order to prove our claim it is enough to show that for any fixed $T< \infty$
\begin{equation}\label{lim N}
\lim_{N \rightarrow \infty} \mathbb{P}( \tau_N \le T)= 0.
\end{equation}
To this end we make use of the auxiliary stopping time $\gamma_M,$ see \eqref{min gamma M}, used in the derivation of the high-order estimates given in Section \ref{Sec3}. More precisely, the main idea is to control the Sobolev norms appearing in the announcing sequence of stopping times $\tau_N$ by virtue of Proposition \ref{Prop global s+1 s}. Our claim then follows by sending $M \rightarrow \infty.$ 
To be more explicit, we use 
\begin{equation}
\bar{\gamma}_M:= \gamma_M \land 2T
\end{equation}
and we do not relabel the above stopping time in order to avoid heavy notations. \\
\\
Then we use the following inequality
\begin{equation}\label{prob int}
\mathbb{P}\big( \tau_N \le T\big) \le \mathbb{P}\big( \{ \tau_N \le T \} \cap \{ \gamma_M > T \}\big)+ \mathbb{P}\big( \gamma_M \le T\big).
\end{equation}
Therefore by definition of $\tau_N,$ Sobolev embedding and Markov inequality we have 
\begin{equation}\label{ineq final global}
\begin{split}
& \mathbb{P} \big( \{ \tau_N \le T \} \cap \{ \gamma_M > T \} \big) \le \mathbb{P} \big( \big\{ \sup_{t \in [0, \tau_N \land T]} ( \| A \|^2_{H^3}+ \| u \|^2_{H^3} ) \ge N \big\} \cap \{ \gamma_M > T \} \big) \\ & \le \mathbb{P} \big( \big\{ \sup_{t \in [0, \tau_N \land \gamma_M]} ( \| A \|^2_{H^3}+ \| u \|^2_{H^3} ) \ge N \big\} \big) \le \frac{1}{N} \mathbb{E} \bigg[ \sup_{t \in [0, \tau_N \land \gamma_M]} \big( \| A \|^2_{H^3}+ \| u \|^2_{H^3} \big) \bigg].
\end{split}
\end{equation}
Thus by virtue of Proposition \ref{Prop global s+1 s} we send $N \rightarrow \infty$ in \eqref{ineq final global} and use \eqref{prob int}  to deduce that for any fixed $M>0$
\begin{equation}\label{stima gamma M}
\lim_{N \rightarrow \infty} \mathbb{P}( \tau_N < T) \le \mathbb{P}(\gamma_M \le T).
\end{equation}
To conclude we recall \eqref{gamma1}, \eqref{gamma2}, \eqref{gamma3}, \eqref{min gamma M} and we take the limit $M \rightarrow \infty$ in \eqref{stima gamma M}. The proof is therefore concluded.
\appendix
\section{}\label{Deterministic tools}
\noindent
In this short appendix we present some functional inequalities used in the derivation of the BD entropy result Proposition \ref{BD prop} and other technical lemmas. The following lemma can be found in \cite{Alazard}, Proposition $2.8$, see also \cite{Germain LeFloch}, Theorem 2.1 for a similar result. 
\begin{lemma}\label{Functional ineq }
For any $d \ge 1 $ and any positive function $f$ in $H^2(\mathbb{T}^d),$
\begin{equation*}
\int_{\mathbb{T}^d} | \nabla f^\frac{1}{2}|^4  dx \le \dfrac{9}{16} \int_{\mathbb{T}^d} (\Delta f)^2 dx.
\end{equation*}
\end{lemma}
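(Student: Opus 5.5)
The inequality $\int |\nabla f^{1/2}|^4 \le \frac{9}{16}\int (\Delta f)^2$ for positive $f\in H^2(\mathbb{T}^d)$ will be proved by working with $g=f^{1/2}$, or equivalently by writing $|\nabla f^{1/2}|^4 = \frac{|\nabla f|^4}{16 f^2}$. The goal is then
\begin{equation*}
\int_{\mathbb{T}^d} \frac{|\nabla f|^4}{f^2}\,dx \le 9 \int_{\mathbb{T}^d} (\Delta f)^2\,dx .
\end{equation*}
By density it suffices to take $f$ smooth and bounded below by a positive constant. Otherwise replace $f$ by $f_\varepsilon=f+\varepsilon$, mollify, and pass to the limit with Fatou on the left. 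The right side converges, and $\Delta(f+\varepsilon)=\Delta f$.

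The key step is one integration by parts. Write
\begin{equation*}
\frac{|\nabla f|^4}{f^2} = |\nabla f|^2\,\nabla f\cdot \nabla f\, f^{-2} = -|\nabla f|^2 \nabla f\cdot \nabla (f^{-1}).
\end{equation*}
Integrating by parts on the torus, where there are no boundary terms, gives
\begin{equation*}
I:=\int \frac{|\nabla f|^4}{f^2} = \int \frac{1}{f}\,\mathrm{div}\big(|\nabla f|^2\nabla f\big) = \int \frac{|\nabla f|^2\Delta f}{f} + 2\int \frac{\nabla f\cdot D^2 f\,\nabla f}{f}.
\end{equation*}
The first term is handled by Cauchy--Schwarz: $\int \frac{|\nabla f|^2|\Delta f|}{f}\le I^{1/2}\|\Delta f\|_{L^2}$. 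The Hessian term is harder, since in general $d$ the quantity $D^2f$ is not pointwise controlled by $\Delta f$. Only $\|D^2 f\|_{L^2}=\|\Delta f\|_{L^2}$ holds globally on the torus, which would give $I^{1/2}\le 3\|\Delta f\|_{L^2}$ directly, with constant exactly $9$.

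The remaining task is to justify that $\int \frac{\nabla f\cdot D^2 f\nabla f}{f}\le I^{1/2}\|D^2f\|_{L^2}$ is allowed, which is just Cauchy--Schwarz with $|\nabla f\cdot D^2f\nabla f|\le |\nabla f|^2|D^2f|$. Combined with the Bochner identity $\int |D^2 f|^2=\int (\Delta f)^2$ on $\mathbb{T}^d$, obtained by integrating by parts twice, this yields
\begin{equation*}
I\le I^{1/2}\|\Delta f\|_{L^2} + 2 I^{1/2}\|D^2 f\|_{L^2} = 3 I^{1/2}\|\Delta f\|_{L^2}.
\end{equation*}
Dividing by $I^{1/2}$ (the case $I=0$ is trivial) and squaring gives $I\le 9\|\Delta f\|_{L^2}^2$. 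This is exactly the $\frac{9}{16}$ constant after the factor $16$.

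The main obstacle is rigor in the approximation. One must ensure $I<\infty$ before dividing, which holds for $f\ge\varepsilon>0$ smooth. One must also check that the constant survives the limit $\varepsilon\to0$ for merely positive $H^2$ functions; Fatou's lemma on the left side handles this. In the one-dimensional setting actually used in the paper the Hessian reduces to $f''=\Delta f$, so the Bochner step is trivial.
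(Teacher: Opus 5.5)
Your proof is correct: the identity $\int f^{-2}|\nabla f|^4\,dx=\int f^{-1}\operatorname{div}(|\nabla f|^2\nabla f)\,dx$, Cauchy--Schwarz on the two resulting terms, and $\|D^2 f\|_{L^2}=\|\Delta f\|_{L^2}$ on $\mathbb{T}^d$ give exactly $I\le 9\|\Delta f\|_{L^2}^2$. The $f+\varepsilon$, mollification and Fatou step then correctly removes the smoothness and lower-bound assumptions; in $d=1$, the case used in the paper, $H^2\subset C^1$ already gives a positive lower bound. The paper gives no proof of this lemma and only cites Alazard--Bresch (Proposition 2.8), so your argument, the standard integration-by-parts proof of this estimate, makes the lemma self-contained.
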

\noindent
Note that the constant $\frac{9}{16}$ is optimal in the sense that if the above inequality holds for a constant $c >\frac{9}{16}$ then $f$ is constant.
\noindent
The characterization of the strong coercivity condition \eqref{SCC} is given in the proposition below. Although the proof follows the lines of  Proposition 2.3 in \cite{Ant}, we rewrite it here to account for a modified definition of the effective velocity $Q,$ which has been tailored to include the case of constant viscosity.
\begin{proposition}\label{char scc}
Let $D_{\alpha,\beta}[\rho]$ be defined as in \eqref{entropy D cap}. Then $D_{\alpha,\beta}[\rho] \ge 0$ if and only if \eqref{SCC} holds.
Furthermore, assume that 
\begin{equation}\label{not sharp scc}
2\alpha-4 < \beta < 2\alpha-1,
\end{equation}
then for $\theta:= \frac{\alpha+\beta+1}{2}$ there exist two positive constants $C_1, \; C_2$ depending on $\alpha$ and $\beta$ such that 
\begin{itemize}
\item [1)] For $\theta \neq 0$
\begin{equation}\label{D not sharp theta}
\int_{\mathbb{T}} | \partial_{xx} \rho^{\theta}|^2 dx + \int_{\mathbb{T}} | \partial_x \rho^{\frac{\theta}{2}} |^4 dx \le C_1 D_{\alpha,\beta}[\rho],
\end{equation}
\item [2)] For $\theta=0$
\begin{equation}\label{D not sharp theta=0}
\int_{\mathbb{T}} | \partial_{xx} \log \rho|^2 dx + \int_{\mathbb{T}} | \partial_x \log \rho |^4 dx \le C_2 D_{\alpha,\beta}[\rho].
\end{equation}
\end{itemize}
\end{proposition}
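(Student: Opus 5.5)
We set $\theta=\frac{\alpha+\beta+1}{2}$ and $f=\rho^{\theta}$, assuming first $\theta\neq0$. Since $\partial_x\rho^{\frac{\alpha+\beta+1}{4}}=\partial_x f^{1/2}$ and $\frac{64}{(\alpha+\beta+1)^2}=\frac{16}{\theta^2}$, the dissipation \eqref{entropy D cap} becomes
\begin{equation*}
D_{\alpha,\beta}[\rho]=\frac{1}{\theta^2}\Big(\int_{\mathbb{T}}|\partial_{xx}f|^2dx+16\kappa\int_{\mathbb{T}}|\partial_x f^{1/2}|^4dx\Big),\qquad \kappa:=\frac{(\alpha-\beta-1)(1-\alpha)}{(\alpha+\beta+1)^2}-\frac{\beta}{3(\alpha+\beta+1)}.
\end{equation*}
The whole proposition then reduces to comparing $\kappa$ with the threshold $-\frac19$ coming from Lemma \ref{Functional ineq }. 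If $\kappa\ge0$, nonnegativity is trivial. If $\kappa<0$, Lemma \ref{Functional ineq } gives
\begin{equation*}
\theta^2 D_{\alpha,\beta}[\rho]\ge(1+9\kappa)\int_{\mathbb{T}}|\partial_{xx}f|^2dx,
\end{equation*}
so $\kappa\ge-\frac19$ is sufficient for $D_{\alpha,\beta}\ge0$. For necessity, I will use the optimality of the constant $\frac{9}{16}$ stated after Lemma \ref{Functional ineq }. If $\kappa<-\frac19$, then $16|\kappa|>\frac{16}{9}$, so there is a positive $f\in H^2(\mathbb{T})$ with $\int|\partial_x f^{1/2}|^4>\frac{1}{16|\kappa|}\int|\partial_{xx}f|^2$, and hence $D_{\alpha,\beta}<0$ for $\rho=f^{1/\theta}$. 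I expect this sharpness step to be the main obstacle: the paper only quotes optimality, so I would either rely on that remark or exhibit near-extremizers, for instance by adapting the one-dimensional extremal profiles from \cite{Alazard}.

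Next I translate $\kappa\ge-\frac19$ into \eqref{SCC}. Multiplying by $9(\alpha+\beta+1)^2>0$, the condition reads
\begin{equation*}
P(\beta):=9(\alpha-\beta-1)(1-\alpha)-3\beta(\alpha+\beta+1)+(\alpha+\beta+1)^2\ge0 .
\end{equation*}
As a polynomial in $\beta$, $P$ is quadratic with leading coefficient $-3+1=-2<0$. A direct substitution shows $P(2\alpha-1)=0$: indeed $-9\alpha(1-\alpha)-9\alpha(2\alpha-1)+9\alpha^2=0$. Similarly $P(2\alpha-4)=0$, since it equals $9(\alpha-1)\big[(\alpha-3)-2(\alpha-2)+(\alpha-1)\big]=0$. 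Therefore
\begin{equation*}
P(\beta)=-2(\beta-2\alpha+4)(\beta-2\alpha+1),
\end{equation*}
which is nonnegative exactly when $2\alpha-4\le\beta\le2\alpha-1$. This proves the characterization.

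For the coercive bounds under the strict condition \eqref{not sharp scc}, the factorization gives $P>0$, i.e.\ $1+9\kappa>0$. If $\kappa<0$, the inequality $\theta^2D_{\alpha,\beta}\ge(1+9\kappa)\int|\partial_{xx}f|^2$ controls $\int|\partial_{xx}\rho^\theta|^2$. If $\kappa\ge0$, this control holds directly with constant $1$. In both cases Lemma \ref{Functional ineq } then bounds $\int|\partial_x\rho^{\theta/2}|^4$ by $\frac{9}{16}\int|\partial_{xx}\rho^\theta|^2$, which yields \eqref{D not sharp theta} with $C_1$ depending on $\theta$ and $\kappa$.

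In the degenerate case $\theta=0$, following Remark \ref{degenerate case}, the dissipation is written in terms of $g=\log\rho$ by performing the same integrations by parts as in \eqref{capillarity BD}. Using the identity $\int_{\mathbb{T}}(\partial_xg)^2\partial_{xx}g\,dx=0$, I expect it to reduce to $\int|\partial_{xx}g|^2+\tilde c\int|\partial_xg|^4$. The coefficient $\tilde c$ is the limit as $\theta\to0$ of the corresponding scaled coefficient, and \eqref{not sharp scc} should make it positive. If instead $\tilde c$ turns out to be nonpositive, I will use the one-dimensional estimate $\int|\partial_xg|^4\le 9\int|\partial_{xx}g|^2$. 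This follows from writing $\int (g')^4=-3\int g\,(g')^2g''$ after subtracting the mean, which is not immediate; as a fallback I would obtain it as the $\theta\to0$ limit of Lemma \ref{Functional ineq } applied to $f=\rho^{\varepsilon}$ after normalizing. With this estimate, \eqref{D not sharp theta=0} follows exactly as in the case $\theta\neq0$.
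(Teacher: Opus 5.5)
For $\theta\neq0$ your argument is the paper's. You use the same rewriting of $\theta^2D_{\alpha,\beta}[\rho]$ in terms of $f=\rho^\theta$, Lemma \ref{Functional ineq } for sufficiency and for the coercive bound, and optimality of $\frac{9}{16}$ for necessity. Your factorization $P(\beta)=-2(\beta-2\alpha+4)(\beta-2\alpha+1)$ makes explicit the equivalence with \eqref{SCC}, which the paper only asserts.

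The sharpness step you flag is also only quoted in the paper. It can be checked with $f_\varepsilon=(x^2+\varepsilon^2)^{3/4}$ near $x=0$, extended to a smooth positive periodic function independent of $\varepsilon$ away from $0$. For $\varepsilon\ll|x|$ one has $|\partial_xf_\varepsilon^{1/2}|^4\approx\frac{81}{256}|x|^{-1}$ and $|\partial_{xx}f_\varepsilon|^2\approx\frac{9}{16}|x|^{-1}$. Both integrals therefore grow like $\log(1/\varepsilon)$, with ratio tending to $\frac{9}{16}$; this is the Cauchy--Schwarz equality profile $|x|^{3/2}$. Hence $\rho_\varepsilon=f_\varepsilon^{1/\theta}$ gives $D_{\alpha,\beta}<0$ when $\kappa<-\frac19$.

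The genuine gap is the degenerate case $\theta=0$. There you leave the key coefficient uncomputed, and your fallback inequality is false. The bound $\int_{\mathbb{T}}|\partial_xg|^4dx\le9\int_{\mathbb{T}}|\partial_{xx}g|^2dx$ cannot hold: under $g\mapsto\lambda g$ the left side scales like $\lambda^4$ and the right side like $\lambda^2$. The identity $\int(g')^4=-3\int(g-\bar g)(g')^2g''$ only gives $\int|g'|^4\le9\|g-\bar g\|_{L^\infty}^2\int|g''|^2$. The $\varepsilon\to0$ limit of Lemma \ref{Functional ineq } with $f=\rho^\varepsilon$ also fails: after dividing by $\varepsilon^2$ it collapses to the trivial $0\le\frac{9}{16}\int|\partial_{xx}g|^2$.

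The fallback is not needed, though. Carry out your primary computation with $g=\log\rho$ and $\beta=-\alpha-1$: integrate by parts once, note that the prefactor $\rho^{\alpha+\beta+1}$ equals $1$, and use $\int(\partial_xg)^2\partial_{xx}g\,dx=0$. This gives
\begin{equation*}
\begin{split}
-\int_{\mathbb{T}}\partial_x\mathcal{K}\,Q\,dx&=\int_{\mathbb{T}}\Big(\partial_{xx}g+\alpha|\partial_xg|^2\Big)\Big(\partial_{xx}g+\frac{1-\alpha}{2}|\partial_xg|^2\Big)dx\\
&=\int_{\mathbb{T}}|\partial_{xx}g|^2dx+\frac{\alpha(1-\alpha)}{2}\int_{\mathbb{T}}|\partial_xg|^4dx .
\end{split}
\end{equation*}
So $\tilde c=\frac{\alpha(1-\alpha)}{2}$, which is the limit of $\theta^2\kappa$, as you predicted. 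On the line $\beta=-\alpha-1$, condition \eqref{not sharp scc} reads $0<\alpha<1$. Hence $0<\tilde c\le\frac18$, and \eqref{D not sharp theta=0} holds with $C_2=1/\tilde c$.

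The same scaling observation settles the characterization on this line, which your proposal does not treat. $D_{\alpha,\beta}\ge0$ for all $\rho$ if and only if $\tilde c\ge0$, i.e.\ $0\le\alpha\le1$, which is exactly \eqref{SCC} when $\beta=-\alpha-1$.
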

\begin{proof}
By definition of $D_{\alpha,\beta}$ and $c(\alpha,\beta)$ we have that 
\begin{equation}
\theta^2 \mathcal{D}_{\alpha,\beta}[\rho]= \int_{\mathbb{T}} \big| \partial_{xx} \rho^{\frac{\alpha+ \beta+1}{2}} \big|^2 +  16 \bigg[ \dfrac{(\alpha-\beta-1)(1-\alpha)}{(\alpha+\beta+1)^2}-\dfrac{\beta}{3(\alpha+\beta+1)} \bigg] \big| \partial_x \rho^{\frac{\alpha+\beta+1}{4}} \big|^4 dx,
\end{equation}
On the other hand, by using \eqref{Functional ineq } with $d=1$ and $f=\rho^{\theta}$ we have that 
\begin{equation*}
\theta^2 D_{\alpha,\beta}[\rho] \ge 16 \bigg[ \dfrac{(\alpha-\beta-1)(1-\alpha)}{(\alpha+\beta+1)^2}-\dfrac{\beta}{3(\alpha+\beta+1)} +\frac{1}{9} \bigg] \int_{\mathbb{T}} \big| \partial_x \rho^{\frac{\theta}{2}} \big|^4 dx
\end{equation*}
and we observe that condition \eqref{SCC} is equivalent to 
\begin{equation}\label{optimal inequ}
\dfrac{(\alpha-\beta-1)(1-\alpha)}{(\alpha+\beta+1)^2}-\dfrac{\beta}{3(\alpha+\beta+1)} +\frac{1}{9} \ge 0,
\end{equation}
therefore \eqref{SCC} holds and it is also necessary since the constant $\frac{9}{16}$ is optimal. Clearly, by assuming \eqref{not sharp scc}, then \eqref{D not sharp theta} holds since \eqref{optimal inequ} is a sharp inequality. The case $\theta=0$ follows the exact same lines of argument with respect to the previous case.
\end{proof}
\noindent
\textbf{Declarations}
\\
\\
\textbf{Data Availability.} The authors declare that data sharing is not applicable to this article as no datasets were generated or analysed.
\\
\\
\textbf{Conflict of interest.} The authors declare that they have no conflict of interest.


\begin{thebibliography}{99}
\bibitem{Alazard} T. Alazard and D. Bresch,
\newblock \textit{Functional inequalities and strong Lyapunov functionals for free surface flows in fluid dynamics},
\newblock Interfaces Free Bound.,(2023).

\bibitem{Ant} P. Antonelli,  D. Bresch and S. Spirito,
\newblock \textit{Global weak solutions of the Navier-Stokes-Korteweg equations in one dimension},
\newblock (2025), Preprint available at arXiv:2502.17147.

\bibitem{Ant Cac}  P. Antonelli and Y. Cacchiò
\newblock \textit{The zero capillarity limit for the Euler-Korteweg system with no-flux boundary conditions}
\newblock (2025), Preprint available at arxiv:2510.27682.

\bibitem{Ant0} P. Antonelli, G. Cianfarani Carnevale, C. Lattanzio, and S. Spirito,
\newblock \textit{Relaxation limit from the quantum Navier-Stokes equations to the quantum drift-diffusion equation},
\newblock J. Nonlinear Sci. 31 (2021), no. 5, Paper No. 71, 32 pp. 1.

\bibitem{Ant1} P. Antonelli and P. Marcati,
\newblock \textit{On the finite energy weak solutions to a system in quantum fluid dynamics},
\newblock Comm.Math. Phys., 287, (2009),  no.2,  657-686.

\bibitem{Ant2} P. Antonelli and P. Marcati,
\newblock \textit{The Quantum Hydrodynamics system in two space dimensions},
\newblock Arch. Ration. Mech. Anal., 203, (2012), no.2, 499-527.

\bibitem{Ant3} P. Antonelli, P. Marcati, and H. Zheng, 
\newblock \textit{An intrinsically hydrodynamic approach to multidimensional {QHD} systems},
\newblock Arch. Ration. Mech. Anal., 247, (2023), Paper No. 24, 58.

\bibitem{Ant4} P. Antonelli, P. Marcati, and H. Zheng, 
\newblock \textit{Genuine hydrodynamic analysis to the 1-{D} {QHD} system: existence, dispersion and stability},
\newblock Comm. Math. Phys., 383, (2021),  no.3, 2113--2161.

\bibitem{Ant5} P. Antonelli, L. E. Hientzsch, and P. Marcati, 
\newblock \textit{On the low Mach number limit for quantum Navier-Stokes equations},
\newblock SIAM J. Math. Anal., 52, (2020), no.6,  6105-6139.

\bibitem{Ant6} P. Antonelli, L.E.  Hientzsch, and S. Spirito
\newblock \textit{Global existence of finite energy weak solutions to the quantum Navier-Stokes equations with non-trivial far-field behavior},
\newblock J. Differential Equations, 290, (2021), 147-177.

\bibitem{Spirito} P. Antonelli and S. Spirito,
\newblock \textit{Global existence of weak solutions to the Navier-Stokes-Korteweg equations},
\newblock Ann.  Inst.  H.  Poincarè C. Anal. Non Linèaire 39 (2022),  no.1, pp 171-200.

\bibitem{Spirito2} P. Antonelli and S. Spirito,
\newblock \textit{On the compactness of finite energy weak solutions to the quantum Navier-Stokes equations},
\newblock J.  Hyperbolic Differ.  Equ.,  15, (2018),  no.1,  pp. 133-147.

\bibitem{Aud} C. Audiard and B. Haspot,
\newblock \textit{Global well-posedness of the Euler-Korteweg system for small irrotational data},
\newblock Comm. Math. Phys., 351, (2017), no. 1,  201-247.


\bibitem{Benz} S. Benzoni-Gavage, R. Danchin, and S. Descombes,
\newblock \textit{On the well-posedness for the Euler-Korteweg model in several space dimensions},
\newblock Indiana Univ. Math. J., 56, (2007),  no.4, 1499-1579.

\bibitem{Benz2} S. Benzoni-Gavage, R. Danchin and S. Descombes, \newblock \textit{Well-posedness of one-dimensional Korteweg models}, 
\newblock Electron. J. Differential Equations {2006}, No. 59, 35 pp..


\bibitem{Breit Feir Hof 1} D. Breit, E. Feireisl and M. Hofmanov\'{a}
\newblock \textit{Incompressible limit for compressible fluids with stochastic forcing},
\newblock Arch. Rational Mech. Anal. 222, 895–926, 2016.

\bibitem{Breit Feir Hof 2} D. Breit, E. Feireisl, and M. Hofmanov\'{a}
\newblock \textit{Compressible fluids driven by stochatic forcing: The relative energy
inequality and applications},
\newblock Commun. Math. Phys. 350, 443–473, 2017.

\bibitem{Breit Feir Hof 3} D. Breit, E. Feireisl and M. Hofmanov\'{a},
\newblock \textit{Local strong solutions to the stochastic compressible Navier-Stokes
system},
\newblock Comm. Partial Differential Equations 43 (2018), no.~2, 313--345.

\bibitem{Feir} D. Breit,  E. Feireisl,  and M. Hofmanov\'{a}, 
\newblock Stochastically Forced Compressible Fluid Flows,
\newblock De Gruyter Series in Applied and Numerical Mathematics. De Gruyter (2018).

\bibitem{Breit Feir Hof 4} D. Breit, E. Feireisl and M. Hofmanov\'{a},
\newblock \textit{On solvability and ill-posedness of the compressible Euler system subject to stochastic forces},
\newblock Anal. PDE 13 (2020), no.~2, 371--402.

\bibitem{Breit Feir Hof 5} D. Breit, E. Feireisl and M. Hofmanov\'{a}, 
\newblock \textit{On the long-time behavior of compressible fluid flows excited by random forcing}, 
\newblock Ann. Inst. H. Poincar\'e{} C Anal. Non Lin\'eaire {41} (2024), no.~4, 961--993.

\bibitem{Breit Feir Hof Much} D. Breit,  E. Feireisl, M. Hofmanov\'{a} and P. B. Mucha,
\newblock \textit{Compressible fluids excited by space-dependent transport noise},
\newblock Preprint (2025), available at arXiv.2504.10256.

\bibitem{Breit Feir Hof Zat} D. Breit, E. Feireisl, M. Hofmanov\'{a} and E. Zatorska, \newblock \textit{Compressible Navier-Stokes system with transport noise},
\newblock SIAM J. Math. Anal. {54} (2022), no.4, 4465-4494.

\bibitem{Breit Hofmanova} D. Breit and M. Hofmanov\'{a}, 
\newblock \textit{Stochastic Navier-Stokes equations for compressible fluids},
\newblock Indiana Univ. Math. J. 65, 1183–1250, 2016.

\bibitem{BD} D. Bresch and B. Desjardins,
\newblock \textit{Sur un modèòe de Saint-Venant visqueux et sa limite quasi géostrophique, [ On viscous shallow-water equations (Saint-Venant model) and the quasi-geostrophic limit]},
\newblock C.R. Math. Acad. Sci. Paris, 335, (2002),  no.12, 1079-1084.

\bibitem{Bresch2} D. Bresch and B. Desjardins, 
\newblock \textit{Existence of global weak solutions for a 2D viscous shallow water equations and convergence to the quasi-geostrophic model}, 
\newblock Comm. Math. Phys. {238} (2003), no.~1-2, 211--223.

\bibitem{Bresch0} D. Bresch, M. Gisclon and I. Lacroix-Violet, 
\newblock \textit{On Navier-Stokes-Korteweg and Euler-Korteweg systems: application to quantum fluids models},
\newblock Arch. Ration. Mech. Anal. {\bf 233} (2019), no.~3, 975--1025

\bibitem{Zatorska} Z.Brzeźniak, G. Dhariwal and E. Zatorska,
\newblock \textit{Sequential stability of weak martingale solutions to stochastic compressible Navier-Stokes equations with viscosity vanishing on vacuum},
\newblock Journal of Differential Equations, Volume 416, Part 2,2025,Pages 1285-1346

\bibitem{Burtea1} C. Burtea and B. Haspot,
\newblock \textit{Existence of global strong solution for the Navier–Stokes–Korteweg system in one dimension for strongly degenerate viscosity coefficients},
\newblock Pure Appl. Anal.,  4, (2022), no. 3, 449-485.

\bibitem{Burtea2} C. Burtea and B. Haspot,
\newblock \textit{Vanishing capillarity limit of the {N}avier-{S}tokes-{K}orteweg system in one dimension with degenerate viscosity coefficient and discontinuous initial density},
\newblock SIAM J. Math. Anal., 54, (2022),  no.2, 1428--1469.

\bibitem{Donatelli} M. Caggio and D. Donatelli, 
\newblock \textit{High Mach number limit for Korteweg fluids with density dependent viscosity},
\newblock J. Differential Equations, 277,  (2021),  1-37.

\bibitem{Cag-Don-Hien} M. Caggio, D. Donatelli and L.E.  Hientzsch
\newblock \textit{Inviscid incompressible limit for capillary fluids with density dependent viscosity},
\newblock (2025), Preprint available at arxiv:2507.00621.

\bibitem{Charve} F.  Charve and B. Haspot,
\newblock \textit{Existence of global strong solution and vanishing capillarity-viscosity limit in one dimension for the Korteweg system},
\newblock SIAM J.  Math. Anal.  45, (2013), no.2,  469-494.

\bibitem{Chen} Z. Chen, X. Chai, B. Dong,  and H. Zhao,
\newblock \textit{Global classical solutions to the one-dimensional compressible fluid models of Korteweg type for large initial data},
\newblock J. Differential Equations, 259, (2015), no.8, 4376-4411.

\bibitem{Cho 2004} Y. Cho,  H. J.  Choe, and H.  Kim,
\newblock \textit{Unique solvability of the initial boundary value problems for compressible viscous fluids},
\newblock J.  Math. Pures Appl., 83, (2004), no.2, 243-275.

\bibitem{Const} P. Constantin, T.D. Drivas,  H.Q. Nguyen,  and F.  Pasqualotto,
\newblock \textit{Compressible fluids and active potentials},
\newblock Ann. Inst. H. Poincaré C. Anal. Non Linéaire,  37,  (2020), no.1, pp. 145-180.

\bibitem{Const2} P. Constantin, T.D. Drivas,  and R. Shvydkoy,
\newblock \textit{Entropy hierarchies for equations of compressible fluids and self-organized dynamics},
\newblock SIAM J. Math. Anal., 52, (2020), no. 3, 3073-3092.

\bibitem{Coti} M. Coti-Zelati,  N. Glatt-Holtz,  and K. Trivisa,
\newblock \textit{Invariant measure for the stochastic compressible Navier-Stokes equations},
\newblock Appl. Math. Optim., 83, (2021), no. 3, 1487-1522.

\bibitem{Da Prato} G. Da Prato and J. Zabczyk,
\newblock Stochastic Equations in Infinite Dimensions,
\newblock volume 44 of \textit{Encyclopedia of Mathematics and its Applications},  Cambridge University Press, Cambridge, 1992.

\bibitem{Deb} A. Debussche, N. Glatt–Holtz, R. Temam, and M. Ziane,
\newblock \textit{Global Existence and Regularity for the 3D Stochastic Primitive Equations of the Ocean and Atmosphere with Multiplicative White Noise},
\newblock Nonlinearity, 25, (2012), no.7, 2093–2118.

\bibitem{Denzler} J. Denzler and R.J. McCann,
\newblock \textit{Nonlinear diffusion form a delocalized source: affine self-similariy, time reversal and nonradial focusing geometries},
\newblock Ann. Inst. H. Poincaré C Anal. Non Linéaire 25 (2008), no. 5, 865-888.

\bibitem{Feir. Don.} D. Donatelli, E. Feireisl,  and P. Marcati,
\newblock \textit{Well/ill Posedness for the Euler-Korteweg-Poisson System and Related Problems},
\newblock Comm.  Partial Differential Equations, 40, (2015),  no.7,  pp. 1314-1335.

\bibitem{Don-Pes} D. Donatelli and L. Pescatore,
\newblock \textit{Blow up criteria for a fluid dynamical model arising in astrophysics},
\newblock  J. Hyperbolic Differ.  Equ., 20, (2023), no.3,  pp. 629-668.

\bibitem{D.P.S.} D. Donatelli, L. Pescatore and S. Spirito, 
\newblock \textit{Global regularity for the one-dimensional stochastic Quantum-Navier-Stokes equations}
\newblock (2024), Preprint, available at arXiv:2401.10064.

\bibitem{D.P.S.2} D. Donatelli, L. Pescatore and S. Spirito, 
\newblock \textit{Weak martingale solutions to the stochastic 1D Quantum-Navier-Stokes equations}, 
\newblock (2024), Preprint, available at arXiv:2412.10875.

\bibitem{Dunn and Serrin} J.E. Dunn, J. Serrin 
\newblock \textit{On the thermomechanics of interstitial working}, 
\newblock Arch. Rational Mech. Anal. 88, 95–133 (1985). 

\bibitem{Fanelli1} F. Fanelli, 
\newblock \textit{Highly rotating viscous compressible fluids in presence of capillarity effects}, 
\newblock Math. Ann. {366} (2016), no.~3-4, 981--1033.

\bibitem{Fanelli2} F. Fanelli, 
\newblock \textit{A singular limit problem for rotating capillary fluids with variable rotation axis}, 
\newblock J. Math. Fluid Mech. {18} (2016), no.~4, 625--658.

\bibitem{Germain LeFloch} P. Germain and P.  LeFloch,
\newblock \textit{Finite Energy Method for Compressible Fluids: The Navier-Stokes-Korteweg Model},
\newblock Comm. Pure Appl. Math., 69, (2015), no.1, 3-61.

\bibitem{Giess} J. Giesselmann, C. Lattanzio and A.E. Tzavaras, 
\newblock \textit{Relative energy for the Korteweg theory and related Hamiltonian flows in gas dynamics}, 
\newblock Arch. Ration. Mech. Anal. {\bf 223} (2017), no.3, 1427--1484.

\bibitem{Glatt-Holtz} N. Glatt-Holtz and M. Ziane.
\newblock \textit{Strong pathwise solutions of the stochastic Navier-Stokes system.}
\newblock Adv.  Differential Equations,  14, (2009), no.5-6: 567-600.

\bibitem{Gorban} A.N. Gorban and I.V. Karlin, 
\newblock \textit{Hilbert's 6th problem: exact and approximate hydrodynamic manifold for kinetic equations},
\newblock Bull. Amer. Math. Soc. (N.S.) 51 (2014), no.2, 187-246.

\bibitem{Haspot} B. Haspot,
\newblock \textit{Existence of Global Weak Solution for Compressible Fluid Models of Korteweg Type},
\newblock J. Math. Fluid Mech., 13, (2011),  no.2, 223-249.

\bibitem{Hatt} H. Hattori and D. Li,
\newblock \textit{Solutions for two dimensional systems for materials of Korteweg type},
\newblock SIAM J.Math.Anal., 25, (1994), no.1,  85-98.

\bibitem{Jung qns} A. J\"{u}ngel,  
\newblock \textit{Dissipative quantum fluid models},
\newblock Riv.  Mat. Univ.  Parma 3 (2012),  no.2, 217--290.

\bibitem{Jung qns 2} A. J\"{u}ngel, 
\newblock \textit{Global weak solutions to compressible Navier–Stokes equations for quantum fluids}, 
\newblock SIAM J. Math. Anal. 42 (2010), 1025--1045.

\bibitem{Jungel2} A. J\"ungel, C.-K. Lin and K.-C. Wu, 
\newblock \textit{An asymptotic limit of a Navier-Stokes system with capillary effects},
\newblock Comm. Math. Phys. {329} (2014), no.~2, 725--744.

\bibitem{Slemrod Kim} Y. J. Kim, M. G. Lee and M. Slemrod, 
\newblock \textit{Thermal creep of a rarefied gas on the basis of non-linear Korteweg-theory},
\newblock Arch. Ration. Mech. Anal.  215 (2015), no.~2, 353--379.

\bibitem{Korteweg} D.J. Korteweg,
\newblock \textit{Sur la forme que prennent les équations du mouvements des fluides si l'on tient copmpte des forces capillaires causées par des variations de densité considérables mais connues et sur la théorie de la capillarité dans l'hypothése d'une variation continue de la densité},
\newblock Archives Néerlandaises des ciences exactes et naturelles 6 Ser. II, 1-24,1901.

\bibitem{Lacroix} I. Lacroix-Violet and A. Vasseur,
\newblock \textit{Global weak solutions to the compressible quantum Navier-Stokes equation and its semi-classical limit},
\newblock J. Math. Pures Appl., 114 (2017), 191--210.

\bibitem{Landau} L. Landau and E. Lifschitz,
\newblock \textit{Quantum Mechanics: Non-relativistic Theory.}
\newblock Pergamon Press,  New York, 1977.

\bibitem{Mellet} A. Mellet and A. Vasseur,
\newblock \textit{Existence and uniqueness of global strong solutions for one-dimensional compressible Navier-Stokes equations,}
\newblock SIAM J. Math. Anal., 39, (2008), no. 4,  pp. 1344–1365.

\bibitem{Pescatore} L. Pescatore,
\newblock \textit{The stochastic compressible Navier-Stokes-Korteweg system: The one-dimensional case},
\newblock PhD Thesis, University of L'Aquila, (2025).

\bibitem{Vasseur Yu 0} A.F.  Vasseur and C. Yu, 
\newblock \textit{Existence of global weak solutions for 3D degenerate compressible Navier-Stokes equations}, 
\newblock Invent. Math.  206 (2016), no.~3, 935--974.

\bibitem{Vasseur Yu 1} A.F.  Vasseur and C. Yu, 
\newblock \textit{Global weak solutions to the compressible quantum Navier-Stokes equations with damping}, 
\newblock SIAM J. Math. Anal., 48 (2016), 1489-1511.

\end{thebibliography}
\end{document}